\documentclass[10pt]{amsart}

\usepackage{graphicx}

\textwidth 16cm \oddsidemargin -0.01cm
\evensidemargin -0.01cm
\addtolength{\textheight}{3cm} \addtolength{\topmargin}{-2cm}

\usepackage[USenglish]{babel}
\usepackage{latexsym}
\usepackage{amsmath}
\usepackage{amsfonts}
\usepackage{amssymb}
\usepackage{esint}
\usepackage[all]{xy}

\renewcommand{\d}{\delta }
\newcommand{\D }{\Delta }

\newcommand{\M}{\mathcal{M}}

\newcommand{\e }{\varepsilon }
\newcommand{\g }{\gamma}

\renewcommand{\l }{\lambda }
\renewcommand{\L }{\Lambda }

\newcommand{\n }{\nabla }
\newcommand{\var }{\varphi }
\newcommand{\rh }{\rho }

\newcommand{\s }{\sigma }
\newcommand{\Sg }{\Sigma}

\renewcommand{\th }{\theta }
\renewcommand{\o }{\omega }
\renewcommand{\O }{\Omega }

\newcommand{\ov}{\overline}

\def\o{\Omega}
\def\p{\partial}
\def\le{\langle}
\def\r{\rangle}
\newcommand{\wtilde }{\widetilde}

\newcommand{\be}{\begin{equation}}
\newcommand{\ee}{\end{equation}}

\newtheorem{corollary}{Corollary}[section]
\newtheorem{remark}{Remark}[section]
\newcommand{\R}{\mathbb{R}}

\newcommand{\N}{\mathbb{N}}
\newcommand{\no}{\noindent}
\newcommand{\dis}{\displaystyle}

\newcommand{\dkr}{{\bf d}}

\newtheorem{theorem}{Theorem}[section]
\newtheorem{proposition}[theorem]{Proposition}
\newtheorem{example}[theorem]{Example}

\newcommand{\bpr}{\begin{proposition}}
\newcommand{\epr}{\end{proposition}}
\newcommand{\bex}{\begin{example}\rm}
\newcommand{\eex}{\end{example}}

\begin{document}

\newtheorem{lem}{Lemma}[section]
\newtheorem{pro}[lem]{Proposition}
\newtheorem{thm}[lem]{Theorem}
\newtheorem{rem}[lem]{Remark}
\newtheorem{cor}[lem]{Corollary}
\newtheorem{df}[lem]{Definition}

\title[The Tzitz\'eica equation]{Analytic aspects of the Tzitz\'eica equation: blow-up analysis and existence results}

\author{Aleks Jevnikar, Wen Yang}

\address{ Aleks Jevnikar,~University of Rome `Tor Vergata', Via della Ricerca Scientifica 1, 00133 Roma, Italy}
\email{jevnikar@mat.uniroma2.it}

\address{ Wen ~Yang,~Center for Advanced Study in Theoretical Sciences (CASTS), National Taiwan University, Taipei 10617, Taiwan}
\email{math.yangwen@gmail.com}

\thanks{We thank Professor Juncheng Wei for suggesting us the problem and Francesca De Marchis for pointing out Remark~\ref{rem:analog}.}

\keywords{Geometric PDEs, Tzitz\'eica equation, Blow-up analysis, Variational methods}

\subjclass[2000]{ 35J61, 35J20, 35R01, 35B44.}

\begin{abstract}
We are concerned with the following class of equations with exponential nonlinearities:
$$
	\Delta u+h_1e^u-h_2e^{-2u}=0 \qquad \mathrm{in}~B_1\subset\R^2,
$$
which is related to the Tzitz\'eica equation. Here $h_1, h_2$ are two smooth positive
functions. The purpose of the paper is to initiate the analytical study of the above equation and to give a quite complete picture both for what concerns the blow-up phenomena and the existence issue.

\medskip

In the first part of the paper we provide a quantization of local blow-up masses associated to a blowing-up sequence of solutions. Next we exclude the presence of blow-up points on the boundary under the Dirichlet boundary conditions.

\medskip

In the second part of the paper we consider the Tzitz\'eica equation on compact surfaces: we start by proving a sharp Moser-Trudinger inequality related to this problem. Finally, we give a general existence result.
\end{abstract}

\maketitle

\section{Introduction}

\medskip

We consider here the following equation
\begin{equation} \label{eq1}
	\Delta u+h_1e^u-h_2e^{-2u}=0 \qquad \mathrm{in}~B_1\subset\R^2,
\end{equation}
where $h_1, h_2$ are two smooth positive functions and $B_1$ is the unit ball in $\R^2$. Equation \eqref{eq1} is related to the Tzitz\'eica equation which arises in differential geometry in the context of surfaces with constant affine curvature, see \cite{tz, tz2}. Moreover, it appears also in several other frameworks: it is related to the Euler's equation for one-dimensional ideal gas dynamics \cite{ga,ga2,sc-ro,tz3}, while in magnetohydrodynamics it is in correspondence to the Hirota-Satsuma PDE \cite{hi-ra, hi-sa}; see also \cite{cmg,du-pla} and the reference therein. We point out that the case of the nonlinearty of the form $e^{\gamma u}$, $\gamma\in[-1,1]$, was recently considered in \cite{pi-ri,ri-ta,rtzz}. Our analysis extends some results obtained for the latter problem, see Remarks \ref{rem:boundary}, \ref{rem:analog}.

The aim of this paper is to start the analysis concerning the equation \eqref{eq1} and to provide detailed blow-up information as well as the general existence results.

\medskip

When $h_2 \equiv 0$ in \eqref{eq1} we obtain the well-known Liouville equation
\begin{equation} \label{liouv}
\Delta u + h \,e^u=0 \qquad \mathrm{in}~B_1\subset\R^2.
\end{equation}
The latter equation is related to the change of Gaussian curvature under conformal deformation of the metric, see \cite{ba, cha, cha2, ly, s}. On the other hand, in mathematical physics it is a model for the mean field of Euler flows, see \cite{clmp} and \cite{ki}. This equation has become quite standard now and we refer the interested reader to \cite{mal} and \cite{tar}.

\medskip

As many geometric problems, \eqref{eq1} (and \eqref{liouv}) presents loss of compactness phenomena, as its solutions might blow-up. Concerning \eqref{liouv} it is a well-known fact (see \cite{bm, li, li-sha}) that for a sequence of blow-up solutions $u_k$ to \eqref{liouv} with blow-up point $\bar x$ there is a quantized \emph{local mass}, more precisely:
\begin{equation} \label{quant}
	\lim_{\d \to 0} \lim_{k \to \infty} \int_{B_\d (\bar x)} h^k e^{u_k} = 8\pi.
\end{equation}

\medskip

On the other hand, when $h_2\neq0$ there are no results concerning the blow-up behavior of solutions to equation \eqref{eq1}. A similar problem, namely the following sinh-Gordon equation
\begin{equation}\label{sinh-gordon}
	\Delta u+h_1e^u-h_2e^{-u}=0 \qquad \mathrm{in}~B_1\subset\R^2,
\end{equation}
was considered by \cite{os, os1} and \cite{jwyz} under the assumption that $h_1=h_2$. Later, in \cite{jwy} the case of general $h_1,h_2$ was studied and the authors proved an analogous quantization property as the one in \eqref{quant}, namely that the blow-up limits are multiple of $8\pi$. The latter blow-up situation may indeed occur, see \cite{ew} and \cite{gp}.

\medskip

The first goal of this paper is to extend this studies to the Tzitz\'eica equation \eqref{eq1} and to prove a quantization result. To this end we give the following preparations.

Let $u_k$ be a sequence of blow-up solutions
\begin{equation}
\label{seq}
\Delta u_k+h_1^ke^{u_k}-h_2^ke^{-2u_k}=0,
\end{equation}
such that $0$ is the only blow-up point in $B_1$, more precisely:
\begin{equation}
\label{a1}
\max_{K\subset\subset B_1\setminus\{0\}}|u_k|\leq C(K),\quad \max_{x\in B_1}\{|u_k(x)|\}\rightarrow\infty.
\end{equation}
We will call $\int_{B_1}h_1^ke^{u_k}$ the energy of $u_k$. Furthermore, we suppose that
\begin{equation}
\label{a2}
h_1^k(0)=h_2^k(0)=1, ~ \frac1C\leq h_i^k(x)\leq C, ~ \|h_i^k(x)\|_{C^3(B_1)}\leq C, \qquad \forall x\in B_1, ~i=1,2,
\end{equation}
for some constant $C>0$. A natural assumption is the following:
\begin{align}
\label{a3}
\begin{split}
|u_k(x)-u_k(y)|\leq C,\qquad \forall~x,y\in\partial B_1, \\
\int_{B_1}h_1^ke^{u_k}\leq C, \qquad \int_{B_1}h_2^ke^{-2u_k}\leq C,
\end{split}
\end{align}
where $C$ is independent of $k.$
\medskip

The local masses are defined as
\begin{align}
\label{localmass}
\begin{split}
&\sigma_1=\lim_{\delta\rightarrow0}\lim_{k\rightarrow\infty}\frac{1}{2\pi}\int_{B_{\delta}}h_1^ke^{u_k}, \\
&\sigma_2=\lim_{\delta\rightarrow0}\lim_{k\rightarrow\infty}\frac{1}{2\pi}\int_{B_{\delta}}h_2^ke^{-2u_k}.
\end{split}
\end{align}

\medskip

Our first result is the following quantization property:
\begin{theorem}
\label{th:quantization}
Let $\sigma_i$ be defined as in (\ref{localmass}). Suppose $u_k$ satisfies (\ref{seq}), (\ref{a1}), (\ref{a3}) and $h_i^k$ satisfy (\ref{a2}). Then $\sigma_1\in 4\N$ and $\sigma_2\in 2\N$.
\end{theorem}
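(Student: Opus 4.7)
The plan is to adapt the blow-up analysis developed for the Liouville equation \cite{bm, li, li-sha} and its extension to the sinh-Gordon equation \cite{jwy, jwyz} to the asymmetric setting of the Tzitz\'eica equation. The three main ingredients are a classification of the possible bubble profiles, the exclusion of ``mixed'' bubbles involving both exponentials, and an iterative extraction procedure accounting for all local mass.

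\emph{Limit profiles.} Along a sequence of local extrema $x_k \to 0$ of $u_k$, two scenarios arise. If $u_k(x_k)\to+\infty$, the term $e^{-2u_k}$ is negligible there: rescaling $\tilde u_k(y)=u_k(x_k+\lambda_k y)-u_k(x_k)$ with $\lambda_k=e^{-u_k(x_k)/2}$ and using Brezis--Merle together with \eqref{a2}--\eqref{a3}, one gets $C^2_{\mathrm{loc}}$-convergence to an entire solution of $\Delta U+e^U=0$ with $\int e^U<\infty$. The Chen--Li classification yields $\int_{\R^2}e^U=8\pi$, a ``quantum'' of $4$ for $\sigma_1$. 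Symmetrically, if $u_k(x_k)\to-\infty$, rescaling by $\mu_k=e^{u_k(x_k)}$ makes the $e^u$ term vanish in the limit and $V_k=-2(u_k-u_k(x_k))$ converges to a solution of $\Delta V+2e^V=0$; Chen--Li then gives $\int e^V=4\pi$, i.e.\ a quantum of $2$ for $\sigma_2$.

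\emph{No mixed bubble.} I would rule out entire solutions of the full equation $\Delta U+e^U-e^{-2U}=0$ in $\R^2$ having both $e^U$ and $e^{-2U}$ in $L^1(\R^2)$: a Brezis--Merle/averaged-asymptotic argument forces any such $U$ to behave like $-\alpha\log|x|$ at infinity, so finiteness of the two integrals requires simultaneously $\alpha>2$ (from $\int e^U<\infty$) and $\alpha<1$ (from $\int e^{-2U}<\infty$), which is impossible. Hence every bubble is of one of the two pure types above.

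\emph{Selection and iteration.} The final and most delicate step is the iterative bubble selection. After removing the first (fastest) bubble, one analyses the solution in annular regions around $0$; a Brezis--Merle $\varepsilon$-regularity, applied separately to $h_1^k e^{u_k}$ and $h_2^k e^{-2u_k}$, locates any secondary concentration, at which a new bubble is selected as above. Pohozaev identities on shrinking annuli ensure that nested bubbles are mass-decoupled, so the total local masses split as a sum of the detected quanta, and the process terminates after finitely many steps by \eqref{a3}. The main obstacle will be the bookkeeping in this ``bubbling tree'': because $e^u$ and $e^{-2u}$ concentrate at different natural scales, one must carefully track on which scale each term is blowing up to apply the correct Pohozaev identity and avoid double counting of mass. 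The asymmetric quanta $4$ and $2$ for $\sigma_1$ and $\sigma_2$ are the signature of this asymmetry and demand a more delicate analysis than in the sinh-Gordon case.
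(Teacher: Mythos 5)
Your first two ingredients are aligned with the paper's setup: Proposition~\ref{pr2.1} (the selection process) does precisely what your ``limit profiles'' step describes (one of the rescaled functions $v_1^k, v_2^k$ converges to a Liouville bubble while the other drops to $-\infty$), and your ``no mixed bubble'' observation is correct though not needed separately, since the selection process already yields the dichotomy at the finest scale. However, the third step --- the iterative accounting for the total local mass --- is where the real difficulty lies, and it is here that your proposal has a genuine gap.

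The claim that ``Pohozaev identities on shrinking annuli ensure that nested bubbles are mass-decoupled, so the total local masses split as a sum of the detected quanta'' misses the central phenomenon. The masses do not simply add up. Once $\sigma_1^k$ reaches $4+o(1)$ inside a disk, the relation $\frac{d}{dr}\bar u_k(r)=\frac{-\sigma_1^k(r)+\sigma_2^k(r)}{r}$ shows that $-2\bar u_k(r)+2\log r$ starts to \emph{increase}: the other component $-2u_k$ can transition from fast to slow decay on intermediate annuli and accumulate new mass there --- not at discrete points but along a continuum of radii. The Pohozaev identity $(\sigma_1-\sigma_2)^2=4\bigl(\sigma_1+\tfrac{\sigma_2}{2}\bigr)$ then pins the next fast-decay configuration to e.g.\ $(4,10)$, the roles of the two components swap, and the process continues (Proposition~\ref{pr2.3}). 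You cannot avoid tracking this alternation; a Brezis--Merle $\varepsilon$-regularity argument at discrete secondary concentration points does not see the mass $-2u_k$ picks up on the annulus between scales.

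The second missing ingredient is the quantization of the slow-decaying component when grouping bubbling disks. In that regime one component, say $\tilde v_2^k$, remains bounded below on annuli around the group, and its weak limit contains a nontrivial $L^1$ part $F$ in addition to Dirac masses from the bubbles. The limit equation is a \emph{singular} Liouville equation $\Delta\tilde v_2+2e^{\tilde v_2}=\sum_j 4\pi\tilde n_j\delta_{q_j}$ in $\R^2$, and the quantization $\frac{1}{2\pi}\int_{\R^2}e^{\tilde v_2}\in 2\N$ is precisely the result of Lin--Wei--Zhang cited as Theorem~A. Without that input you have no way to conclude that $\int F\in 4\pi\N$, and hence no way to get the $4\N$/$2\N$ structure at the group level. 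Your plan would need both of these mechanisms to close.
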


\begin{remark}
Actually, the possible values of $\sigma_i$ are more restrictive. In fact, they have to satisfy the following relation:
$$
4\left(\sigma_1+\frac{\sigma_2}{2}\right)=(\sigma_1-\sigma_2)^2.
$$
Therefore, $(\sigma_1,\sigma_2)$ is one of the two following types:
\begin{align*}
\bigr(2m(3m-1),2(3m-1)(m-1)\bigr) \quad \mathrm{or} \quad \bigr(2(3m-2)(m-1),2(3m-5)(m-1)\bigr),
\end{align*}
for some $m\in \mathbb{Z}$, excluding the case $(0,0)$.
\end{remark}

\medskip

The strategy to prove Theorem \ref{th:quantization} goes as follows: we start by constructing a selection process to describe the situation around the blow-up point. This idea was originally introduced in \cite{chen-lin,ly,s2} for prescribed curvature problem and then adapted in \cite{lin-wei-zh} to treat the $SU(3)$ Toda system. Recently, an analogous method was used in \cite{jwy} to attack the sinh-Gordon case \eqref{sinh-gordon}. In this procedure we detect a finite number of \emph{blowing-up disks} where the local energy is related to that of globally defined Liouville equations. Then one can prove that in each disk the local mass of at least one of the two components $u_k$ and $-2u_k$ is quantized. We then combine disks which are close to each other (we will collect them in \emph{groups}) and get that the local mass of at least one of the two components is still quantized. Finally, we apply an energy quantization result in \cite{lin-wei-zh2} concerning global Liouville equation with singularities jointly with a Pohozaev identity to deduce Theorem \ref{th:quantization}.

\

We next continue the blow-up analysis by considering the problem \eqref{eq1} with Dirichlet boundary conditions, more precisely we are concerned with
\begin{equation} \label{eq2}
\begin{split}
\left\{
\begin{array}{ll}
	\Delta u+h_1e^u-h_2e^{-2u}=0 & \mathrm{in}~\Omega, \\
	u=0 & \mathrm{on}~\partial \Omega,
\end{array}	
\right.
\end{split}	
\end{equation}
where $\Omega$ is a bounded domain in $\R^2$ with smooth boundary $\partial\Omega$. Our aim is to show that there are no blow-up points at the boundary $\partial\Omega$. To this end, we will follow the argument presented in \cite{wei-ya}, where the sinh-Gordon case \eqref{sinh-gordon} is studied. The argument was originally introduced in \cite{lwy,ro-wei} in treating a fourth order mean field equation and the $SU(3)$ Toda system, respectively. Recently, this strategy was refined in \cite{wei-ya} to attack the sinh-Gordon problem \eqref{sinh-gordon}.

The main difficulty is due to the non-trivial blow-up behavior of a sequence of solutions to \eqref{eq2}. More precisely, the work of \cite{apr} suggests the existence of blowing-up sequences of solutions to \eqref{eq2} with no concentration property. In other words, there is a mass residual and the bubbling solutions may not converge to a summation of Green functions away from the blow-up points. This is in contrast to the standard Liouville equation \eqref{liouv}, see for example \cite{bm,li,li-sha}. This leads to refine the blow-up analysis and by means of a Pohozaev identity we are still able to prove the following result.
\begin{theorem} \label{th:boundary}
Let $u_k$ be a blowing-up sequence of solutions to \eqref{eq2} and suppose that the analogous conditions as in \eqref{a2}, \eqref{a3} hold true. More precisely we have
$$
	\sup_{x\in\Omega} |u_k(x)|\to+\infty.
$$
Then the blow-up set $S$ of $|u_k|$ is finite and it holds $S\cap\partial\Omega = \emptyset$.
\end{theorem}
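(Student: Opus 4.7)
The plan is to adapt the boundary blow-up exclusion scheme of \cite{wei-ya} for sinh-Gordon (refining \cite{lwy,ro-wei}) to the Tzitz\'eica nonlinearity. Finiteness of the blow-up set $S$ is standard: given the $L^1$-bounds on $h_1^k e^{u_k}$ and $h_2^k e^{-2u_k}$ (the Dirichlet analogue of \eqref{a3}), a Brezis--Merle type alternative shows that at any non-blow-up point $u_k$ remains locally $L^\infty$-bounded, while each blow-up point carries at least a fixed quantum of one of the two masses; the integrability bounds then force $S$ to be finite.

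To rule out a boundary blow-up point $p_0\in S\cap\partial\Omega$ I argue by contradiction. After flattening $\partial\Omega$ near $p_0$ by a local diffeomorphism, one may work on a half-disk $B_r^+\subset\R^2_+$ with $p_0=0$ and $u_k=0$ on the flat segment $\Gamma_r:=B_r\cap\{x_2=0\}$. A natural first attempt---odd reflection across $\Gamma_r$---fails because the substitution $\tilde u_k(x_1,x_2):=-u_k(x_1,-x_2)$ sends $h_1 e^u-h_2 e^{-2u}$ to $h_1 e^{-\tilde u}-h_2 e^{2\tilde u}$, which is \emph{not} of Tzitz\'eica form. One must therefore work directly on $B_r^+$, and this asymmetry is the main source of technical difficulty relative to the sinh-Gordon case. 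The central tool is then the Pohozaev identity for $x\cdot\n$ on $B_s^+$, $0<s<r$: testing the equation against $x\cdot\n u_k$ and integrating by parts, the contributions on $\Gamma_s$ drop out since $u_k|_{\Gamma_s}\equiv 0$ forces $x\cdot\n u_k=0$ and $x\cdot\nu=0$ there, leaving
\begin{align*}
&\int_{\partial B_s^+\setminus\Gamma_s}\!\Bigl[(\partial_\nu u_k)(x\cdot\n u_k)-\tfrac12 |\n u_k|^2(x\cdot\nu)\Bigr]\,dS \\
&\quad = 2\!\int_{B_s^+}\!h_1^k e^{u_k}+\!\int_{B_s^+}\!h_2^k e^{-2u_k}+R_k(s),
\end{align*}
where $R_k(s)$ collects arc integrals of $h_i^k e^{\pm u_k}(x\cdot\nu)$ and volume integrals in $\n h_i^k$, both controllable under the analogue of \eqref{a2}.

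To close the argument I would describe the behavior of $u_k$ on the arc $\partial B_s^+\setminus\Gamma_s$ via the Green's function of the Dirichlet Laplacian on $B_r^+$ (obtained by the method of images): since $0$ is the only blow-up point, away from $0$ the sequence splits into a finite collection of concentrating bubbling profiles plus a bounded residual, so the arc integral admits an asymptotic expansion in terms of the local masses $\sigma_1,\sigma_2$ from \eqref{localmass}. Letting first $k\to\infty$ and then $s\to 0$, the right-hand side of the Pohozaev identity converges to $4\pi\sigma_1+2\pi\sigma_2$, while the left-hand side, being quadratic in $\n u_k$, yields an explicit quadratic combination of $\sigma_1,\sigma_2$ dictated by the image Green's function. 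Combined with the quantization $\sigma_1\in 4\N$, $\sigma_2\in 2\N$ of Theorem \ref{th:quantization} and the algebraic constraint $4(\sigma_1+\sigma_2/2)=(\sigma_1-\sigma_2)^2$ of the subsequent Remark, the resulting relation is incompatible with $(\sigma_1,\sigma_2)\neq(0,0)$, giving the contradiction. The hardest step will be precisely this asymptotic expansion: because the solutions exhibit a genuine mass residual (cf.\ \cite{apr}), $u_k$ cannot be approximated by a pure sum of Green's functions, so one must separately identify the leading behavior of the bubbling and residual parts on $\partial B_s^+\setminus\Gamma_s$, carefully tracking how the asymmetric scaling between $e^u$ and $e^{-2u}$ interacts with the Dirichlet condition on $\Gamma_r$.
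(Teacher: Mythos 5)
Your overall strategy (Pohozaev identity near the boundary point, combined with a lower bound on the concentrated mass) is the right flavor, and your observation that odd reflection does not preserve the Tzitz\'eica form is correct and important. However, the heart of your plan --- extracting an explicit quadratic relation in $\sigma_1,\sigma_2$ from the arc integral on $\partial B_s^+\setminus\Gamma_s$ and playing it against the interior constraint $(\sigma_1-\sigma_2)^2 = 4(\sigma_1+\sigma_2/2)$ --- rests on an assumption that fails for a boundary blow-up. The Dirichlet Green's function $G(x,y)$ vanishes when $y\in\partial\Omega$, so a point of $\mathcal S\cap\partial\Omega$ produces \emph{no} Green's function singularity at all. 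In the decomposition $u_k \to \mathcal G + \mathcal U$ of Lemma~\ref{le3.5}, the sum defining $\mathcal G$ runs only over $p\in\mathcal S\cap\Omega$; hence $\mathcal G$ is $C^2$--bounded on $\Omega\cap\partial B_r(x_0)$ uniformly in $r$, and the entire behavior of $u_k$ near $x_0\in\partial\Omega$ is carried by the residual $\mathcal U$. Since $\mathcal U$ solves $-\D\mathcal U = r_1-r_2$ with $r_i$ merely in $L^1$, there is no clean pointwise asymptotic (Green's, image, or otherwise) for $\n\mathcal U$ on $\partial B_s$; the ``hardest step'' you flag cannot be completed along your route, and no quadratic expression in $\sigma_1,\sigma_2$ emerges on the left-hand side.

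The paper's proof takes a structurally different route that sidesteps this obstruction. It does not flatten the boundary (thereby avoiding the extra terms a variable-coefficient Laplacian would introduce into the Pohozaev identity). Instead, it keeps the Euclidean Pohozaev identity on $\Omega\cap B_r(x_0)$ and replaces the center $x_0$ by a cleverly shifted point $z_k = x_0 + \Theta_{k,r}\nu(x_0)$, with $\Theta_{k,r}$ chosen in \eqref{3.14} precisely so that $\int_{\partial\Omega\cap B_r(x_0)} \langle x-z_k,\nu\rangle\,|\partial_\nu u_k|^2 = 0$ \emph{exactly}. This kills the only boundary term on $\partial\Omega\cap B_r$ that could blow up as $k\to\infty$, since on the Dirichlet part one has $\n u_k = (\partial_\nu u_k)\nu$. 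The remaining boundary contributions on $\Omega\cap\partial B_r(x_0)$ are then shown in Lemma~\ref{le3.7} to be $O(\e(r))$: one never computes their limit in terms of local masses, one only shows (by decomposing $\n\mathcal U$ through the Green's kernel, splitting scales, and exploiting $r_i\in L^1$ with the gradient bound $\inf_i|x-p_{k,i}||\n u_k|\le C$ of Lemma~\ref{le3.3}) that they can be made arbitrarily small by a suitable choice of $r$. Feeding this into the Pohozaev identity yields $\lim_{r\to0}\lim_{k\to\infty}\int_{\Omega\cap B_r(x_0)}(2h_1^k e^{u_k}+h_2^k e^{-2u_k})=0$, which contradicts the minimal concentration $\ge 4\pi$ at a blow-up point, \eqref{3.3}. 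Note in particular that this contradiction uses neither the quantization $\sigma_1\in4\N,\ \sigma_2\in2\N$ of Theorem~\ref{th:quantization} nor the algebraic relation $4(\sigma_1+\sigma_2/2)=(\sigma_1-\sigma_2)^2$: only the basic $4\pi$ lower bound is needed. So the correct punchline is ``the boundary mass must vanish,'' not ``two algebraic constraints on $(\sigma_1,\sigma_2)$ are incompatible.''
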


\begin{remark} \label{rem:boundary}
The argument applies to a more general classes of problems of the form
$$
\left\{
\begin{array}{ll}
	\Delta u+h_1e^u-h_2e^{-a u}=0 & \mathrm{in}~\Omega, \\
	u=0 & \mathrm{on}~\partial \Omega,
\end{array}	
\right.
$$
with $a>0$ and the same conclusions as in Theorem \ref{th:boundary} hold for the above problem.
\end{remark}

\

In the second part of the paper we consider the Tzitz\'eica equation on a compact surface $M$:
\begin{equation} \label{eq3}
  - \D u = \rho_1 \left( \frac{h_1 e^{u}}{\int_M
      h_1 e^{u} \,dV_g} - \frac{1}{|M|} \right) - \rho_2 \left( \frac{h_2 e^{-2u}}{\int_M
      h_2 e^{-2u} \,dV_g} - \frac{1}{|M|} \right),
\end{equation}
where $\D=\D_g$ is the Laplace-Beltrami operator, $h_1, h_2$ are smooth positive functions, $\rho_1, \rho_2$ are two positive parameters and $M$ is a compact orientable surface with no boundary and a Riemannian metric $g$. For the sake of simplicity, we normalize the total volume of $M$ so that $|M|=1$.

The purpose here is to give a general (for a general choice of the parameters $\rho_1, \rho_2$ and the manifold $M$) existence result for equation \eqref{eq3} by using variational methods.

\medskip

Problem \eqref{eq3} has a variational structure and the associated energy functional $J_\rho : H^1(M)\to\R$, $\rho=(\rho_1,\rho_2)$ is given by
\begin{equation} \label{functional}
	J_\rho(u) = \frac{1}{2}\int_M |\nabla u|^2 \,dV_g - \rho_1 \left( \log\int_M h_1  e^u \,dV_g  - \int_M u \,dV_g \right)
           -  \frac{\rho_2}{2} \left( \log\int_M h_2 e^{-2u} \,dV_g + \int_M 2u \,dV_g \right).
\end{equation}

\medskip

We recall that the one-parameter case ($\rho_2=0$) corresponds to the standard mean field equation
\begin{equation} \label{liouv2}
  - \D u = \rho \left( \frac{h\,e^{u}}{\int_M
      h\, e^{u} \,dV_g} - \frac{1}{|M|} \right)
\end{equation}
and we refer to the reviews \cite{mal,tar}. In this framework the associated energy functional is in the form
\begin{equation} \label{func2}
	I_\rho(u) = \frac{1}{2}\int_M |\nabla u|^2 \,dV_g - \rho  \left( \log\int_M h\,  e^u \,dV_g  - \int_M u \,dV_g \right).
\end{equation}
The first key tool in treating functional like \eqref{func2} (and \eqref{functional}) is the Moser-Trudinger inequality, which is stated as follows:
\begin{equation}\label{ineq}
	8\pi \log\int_M e^{u-\ov u} \, dV_g \leq \frac 12 \int_M |\n u|^2\,dV_g + C_{M,g}, \qquad \ov u= \fint_M u\,dV_g.
\end{equation}
Inequality \eqref{ineq} implies that the functional \eqref{func2} is bounded from below and coercive if $\rho<8\pi$ and a solution to the Liouville equation \eqref{liouv2} is obtained by direct minimization. On the other hand, when $\rho>8\pi$ the functional $I_\rho$ is unbounded from below and one usually seeks for critical point of saddle type. To handle the problem we need improved Moser-Trudinger inequalities: roughly speaking, the more the term $e^u$ is \emph{spread} over the surface the bigger is the constant in the left-hand side of \eqref{ineq}, see \cite{ch-li}. As a consequence we get a better lower bound on the functional $I_\rho$. One can interpret this in the following way: if $\rho<8(k+1)\pi, k\in\N$ and $I_\rho(u)$ is large negative, $e^u$ is forced to accumulate near at most $k$ points of $M$. To describe these configurations one is leaded to introduce the $k$-th \emph{formal barycentres} of $M$
\begin{equation}	\label{M_k}
	M_k = \left\{ \sum_{i=1}^k t_i \d_{x_i} \, : \, \sum_{i=1}^k t_i=1, \,  x_i\in M \right\}.
\end{equation}
It is indeed possible to show that the low sublevels of the functional $I_\rho$ have at least the topology of $M_k$, which is non-trivial. One can then run a min-max scheme based on this fact to prove existence of solutions to \eqref{liouv2} for $\rho\notin 8\pi\N$.

\medskip

Before discussing our problem let us report what is known about the sinh-Gordon case \eqref{sinh-gordon} since we will proceed here in the same spirit. First of all, an analogous inequality as the one in \eqref{ineq} was proven to hold also for \eqref{sinh-gordon} in \cite{os}. The rough picture of the known results is then the following and is mainly based on variational techniques: when (at least one of) the parameters $\rho_i$ are small then one can exploit some \emph{partial} coerciveness of the related energy functional to exploit the analysis developed for the Liouville equation \eqref{liouv2} (see \cite{mal}) and get a solution to \eqref{sinh-gordon}, see \cite{zhou}.

The first existence result in a non-coercive regime was given in \cite{jev} via a detailed description of the sublevels of the associated energy functional. Later, the authors in \cite{bjmr} provided a general existence result under the assumption the surface has positive genus. See also \cite{jev2, jwy2} for a different approach that relies on the topological degree theory.

\medskip

Concerning the Tzitz\'eica equation \eqref{eq3} and the related functional \eqref{functional}, our first result is to prove a similar sharp Moser-Trudinger inequality as \eqref{ineq}. This is carried out by a similar argument as in \cite{os}, following the idea in \cite{ding} (which was used also for the Toda system in \cite{bat-mal, jost-wang}) jointly with the quantization result stated in Theorem~\ref{th:quantization}.
\begin{theorem}
\label{th:m-t}
The functional $J_\rho$ \eqref{functional} is bounded from below if and only if $\rho_1\leq8\pi$ and $\rho_2\leq4\pi$. Namely, there exists a constant $C_{M,g}$ such that for any $u\in H^1(M)$ it holds
\begin{equation} \label{m-t}
8\pi \log\int_M e^{u-\ov u} \, dV_g + \frac{4\pi}{2} \log\int_M e^{-2(u-\ov u)} \, dV_g \leq \frac 12 \int_M |\n u|^2\,dV_g + C_{M,g},\qquad \ov u= \fint_M u\,dV_g.
\end{equation}
\end{theorem}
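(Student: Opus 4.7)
For necessity, I plug standard Liouville bubble profiles into $J_\rho$. Fix $x_0\in M$ and for $\lambda$ large set
\[
u_\lambda(x) = \log\frac{8\lambda^2}{(1+\lambda^2\,d(x,x_0)^2)^2},
\]
smoothly cut off away from $x_0$. Direct computations give $\tfrac12\int_M|\nabla u_\lambda|^2\,dV_g = 16\pi\log\lambda + O(1)$, $\ov u_\lambda = -2\log\lambda + O(1)$, $\log\int_M e^{u_\lambda-\ov u_\lambda}\,dV_g = 2\log\lambda + O(1)$ and $\log\int_M e^{-2(u_\lambda-\ov u_\lambda)}\,dV_g = O(1)$ (the last because $e^{-2u_\lambda}$ does not concentrate at $x_0$). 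Hence $J_\rho(u_\lambda) = (16\pi-2\rho_1)\log\lambda + O(1) \to -\infty$ whenever $\rho_1 > 8\pi$. The bound $\rho_2\leq 4\pi$ then follows symmetrically from the test family $v_\lambda := -\tfrac12 u_\lambda$, which concentrates $e^{-2v_\lambda}$ and yields $J_\rho(v_\lambda) = (4\pi-\rho_2)\log\lambda + O(1)$.

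For the sharp inequality I follow Ding's scheme, as adapted to two-nonlinearity problems in \cite{jost-wang,bat-mal,os}. Assume, for contradiction, $\inf\{J_{8\pi,4\pi}(u) : u\in H^1(M),\ \ov u=0\} = -\infty$, and for $t\in(0,1)$ introduce the subcritical functional $J^{(t)} := J_{8\pi t,\,4\pi t}$. By a standard variational argument---applying \eqref{ineq} separately to $u$ and to $-2u$ in the simple range $t<1/2$, and using Chen-Li type improved Moser-Trudinger inequalities (supplemented, if needed, by a small $L^2$-regularisation) in the remaining range---one obtains a minimiser $u_t$ of $J^{(t)}$ solving the mean-field Tzitz\'eica equation \eqref{eq3} with parameters $(8\pi t, 4\pi t)$. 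The assumption forces $J^{(t)}(u_t)\to -\infty$ as $t\to 1^-$, and hence $u_t$ must blow up in $H^1(M)$.

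Localising around each blow-up point and appealing to Theorem \ref{th:quantization}, the local masses satisfy $\sigma_1\in 4\N,\ \sigma_2\in 2\N$ and the quadratic relation $4(\sigma_1+\sigma_2/2) = (\sigma_1-\sigma_2)^2$ from the Remark following Theorem \ref{th:quantization}. A direct check of this constraint shows that the only admissible pairs with $\sigma_i$ small are $(4,0)$ and $(0,2)$; in particular $(4,2)$ at a single point is excluded. Since the global masses converge to $(\sigma_1,\sigma_2)=(4,2)$, the blow-up necessarily splits into a concentration of $e^{u_t}$ at some $x_1\in M$ of mass $8\pi$ and a concentration of $e^{-2u_t}$ at a distinct point $x_2\neq x_1$ of mass $4\pi$. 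Expanding $u_t$ near $x_1$ and near $x_2$ as standard Liouville bubbles plus a Green function tail for \eqref{eq3}, inserting this asymptotic profile into $J^{(t)}$ and applying the Pohozaev identity should produce a uniform lower bound $J^{(t)}(u_t)\geq -C$, contradicting $J^{(t)}(u_t)\to -\infty$ and proving \eqref{m-t}. The main obstacle I anticipate is precisely the one flagged in the discussion before Theorem \ref{th:boundary}: the equation can support bubbling sequences with a non-trivial mass residual, so $u_t$ will in general not converge to a pure sum of Green functions away from $\{x_1,x_2\}$, and one must rely on the refined selection and grouping procedure developed for Theorem \ref{th:quantization} to control this residual before the Pohozaev expansion can yield a $t$-uniform lower bound.
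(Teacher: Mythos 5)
Your necessity argument (single-bubble test functions for each parameter separately) and the decision to run Ding's subcritical approximation scheme match the paper's high-level strategy, and your use of the Pohozaev constraint to rule out simultaneous concentration of $e^{u_t}$ and $e^{-2u_t}$ at a single point is also the paper's argument: $(\sigma_1,\sigma_2)=(4,2)$ satisfies the membership $\sigma_1\in4\N$, $\sigma_2\in2\N$ but not the quadratic relation $(\sigma_1-\sigma_2)^2=4(\sigma_1+\sigma_2/2)$. However, your endgame contains a genuine gap, which you have in effect flagged yourself.

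After reducing to blow-up at distinct points $x_1\ne x_2$, you propose to expand $u_t$ near each $x_i$ as a standard Liouville bubble plus a Green function tail, substitute into $J^{(t)}$ and invoke a Pohozaev expansion to get a $t$-uniform lower bound. This is precisely where the mass-residual phenomenon is fatal: Theorem~\ref{th:blow-up} only guarantees $r_i=0$ for \emph{one} $i$, and the bubbling solutions need not converge away from the blow-up set to a sum of Green functions, so the ``bubble + Green tail'' ansatz is unavailable. The paper bypasses any asymptotic expansion with a \emph{localized} Moser--Trudinger inequality (Lemma~\ref{l:loc}): fix a small ball $B_\d(p)$ around a blow-up point, let $v_k$ be the harmonic extension of $u_k|_{\p B_\d(p)}$, and set $\wtilde u_k=u_k-v_k\in H^1_0(B_\d(p))$. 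The only input needed is that $u_k$ is uniformly bounded on $\p B_\d(p)$ (a consequence of the selection process and Harnack estimates, valid regardless of whether the residual vanishes), giving $\|v_k\|_{C^1}\leq C$. Applying the Dirichlet Moser--Trudinger inequality \eqref{ineq-dom} to $\wtilde u_k$ and transferring back to $u_k$ yields
$$
8\pi\log\int_M e^{u_k-\ov u_k}\,dV_g \leq \frac12\int_{B_\d(p_1)}|\n u_k|^2\,dV_g+C_\d,
$$
and likewise for $-2u_k$ near $p_2$; since $B_\d(p_1)\cap B_\d(p_2)=\emptyset$ the two local Dirichlet energies add up to at most $\frac12\int_M|\n u_k|^2$, and $J_{\rho_k}(u_k)\geq -C$ follows immediately. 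You should replace your expansion step with this lemma; the Pohozaev identity is needed only to exclude the same-point case, not to produce the lower bound.

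Two further points. First, you do not treat the case where only one component blows up while the other stays bounded (a real possibility, since $r_2$ can equal $4\pi$ with no concentration of $e^{-2u_t}$); the paper handles it directly with the scalar inequality \eqref{ineq}. Second, you assert without proof that the subcritical functionals $J^{(t)}$ admit minimizers; the paper establishes this as Proposition~\ref{prop:m-t} via the perturbed functional $\wtilde J_\rho$ and a blow-up contradiction, not via Chen--Li improved inequalities. Your outline leaves this nontrivial step unsubstantiated.
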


\medskip

The latter inequality can be also obtained by reinterpreting a result concerning mean field equations involving probability measures in \cite{ri-su}. By the above result we get coerciveness of the functional \eqref{functional} for $\rho_1<8\pi$, $\rho_2<4\pi$ and hence by direct minimization we deduce the following result.
\begin{corollary}
Let $\rho_1<8\pi$ and $\rho_2<4\pi$. Then, the problem \eqref{eq3} admits a solution.
\end{corollary}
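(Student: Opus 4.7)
The plan is to apply the direct method of the calculus of variations, using the sharp Moser--Trudinger inequality of Theorem \ref{th:m-t} to secure coercivity. Since $|M|=1$, the functional $J_\rho$ in \eqref{functional} is translation invariant ($J_\rho(u+c)=J_\rho(u)$ for every $c\in\R$), so I would restrict the minimization to the closed subspace
$$
\bar H^1(M)=\left\{u\in H^1(M):\ \int_M u\,dV_g=0\right\}.
$$
On this subspace, Jensen's inequality gives $\log\int_M e^u\,dV_g\geq 0$ and $\log\int_M e^{-2u}\,dV_g\geq 0$, which is exactly the sign needed to use \eqref{m-t} as a lower bound.

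The first step is to establish coercivity. I would set $\theta=\max\!\left(\rho_1/(8\pi),\,\rho_2/(4\pi)\right)$, which satisfies $\theta<1$ under the hypotheses $\rho_1<8\pi$, $\rho_2<4\pi$. Since both log terms are non-negative on $\bar H^1(M)$,
$$
\rho_1\log\int_M e^u\,dV_g+\frac{\rho_2}{2}\log\int_M e^{-2u}\,dV_g\leq \theta\left[8\pi\log\int_M e^u\,dV_g+2\pi\log\int_M e^{-2u}\,dV_g\right].
$$
Feeding \eqref{m-t} (with $\ov u=0$) into the right-hand side and absorbing the two-sided bound $C^{-1}\leq h_i\leq C$ into an additive constant yields
$$
J_\rho(u)\geq \frac{1-\theta}{2}\int_M|\n u|^2\,dV_g - C_1,\qquad u\in\bar H^1(M),
$$
which, together with Poincar\'e's inequality on $\bar H^1(M)$, is coercivity in the $H^1$-norm.

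The second step is the extraction of a minimizer. I would take a minimizing sequence $\{u_k\}\subset\bar H^1(M)$; coercivity gives boundedness in $H^1(M)$ and hence, up to a subsequence, $u_k\wk u^\star$ weakly in $H^1(M)$, strongly in every $L^p(M)$, and almost everywhere. The Dirichlet energy is weakly lower semicontinuous. For the two log terms I would apply \eqref{ineq} to $q u_k$ for each fixed $q>0$, getting a uniform bound on $e^{\pm q u_k}$ in $L^1(M)$; combining the resulting $L^p$ equi-integrability with a.e.\ convergence and Vitali's theorem yields $e^{u_k}\to e^{u^\star}$ and $e^{-2u_k}\to e^{-2u^\star}$ in $L^1(M)$. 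Thus $J_\rho(u^\star)\leq \liminf_k J_\rho(u_k)$, so $u^\star$ attains the infimum; being a critical point of $J_\rho$, it is a weak solution of \eqref{eq3}, which standard elliptic regularity upgrades to a smooth one.

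No step is genuinely delicate: the real analytic work has already been done in proving Theorem \ref{th:m-t}. The only place that requires a modicum of care is the weak-limit passage in the exponential integrals, and there the subcritical character of the Moser--Trudinger embedding renders the argument routine.
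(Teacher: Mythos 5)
Your proposal is correct and follows exactly the route the paper indicates: Theorem \ref{th:m-t} gives coercivity of $J_\rho$ on zero-average functions when $\rho_1<8\pi$, $\rho_2<4\pi$, and a minimizer is then produced by the direct method (weak lower semicontinuity of the Dirichlet energy plus compactness of the exponential terms via Moser--Trudinger and Vitali). The paper states this in a single sentence without writing out the details you supply, but the argument is the same.
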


\medskip

On the other hand, when both parameter are large, namely $\rho_1>8\pi$ and $\rho_2>4\pi$, the problem becomes more involved due to the interaction of $e^u$ and $e^{-2u}$. Nevertheless, we are able to prove the following general existence result.
\begin{theorem}
\label{th:existence}
Let $M$ be a compact surface with positive genus and suppose that $\rho_1\notin8\pi\N$, $\rho_2\notin 4\pi\N$. Then, \eqref{eq3} has a solution.
\end{theorem}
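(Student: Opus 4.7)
My proof of Theorem \ref{th:existence} follows the variational strategy developed for the sinh-Gordon equation on surfaces of positive genus in \cite{bjmr}, with the Tzitz\'eica-specific ingredients being the sharp Moser-Trudinger inequality \eqref{m-t} and the quantization result of Theorem \ref{th:quantization}. Write $\rho_1\in(8\pi k_1,8\pi(k_1+1))$ and $\rho_2\in(4\pi k_2,4\pi(k_2+1))$ with $k_1,k_2\ge 1$. Since $J_\rho$ is no longer bounded below, I will produce a critical point of saddle type via a min-max scheme based on the topological join $M_{k_1}*M_{k_2}$ of the barycentre sets \eqref{M_k}, and then pass to the limit in a perturbed family by the Struwe monotonicity trick combined with the compactness furnished by Theorem \ref{th:quantization}.

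\textbf{Improved inequality and topology of low sublevels.} The first step is to localize \eqref{m-t}: covering $M$ with a controlled number of disjoint balls and applying the sharp inequality on each piece yields that if the probability measures
\[
\mu_1(u) = \frac{h_1 e^u \,dV_g}{\int_M h_1 e^u \,dV_g},\qquad \mu_2(u) = \frac{h_2 e^{-2u} \,dV_g}{\int_M h_2 e^{-2u} \,dV_g}
\]
are \emph{not} respectively $\e$-close, in the dual Lipschitz distance, to $M_{k_1}$ and $M_{k_2}$, then $J_\rho(u) \ge -C_\e$. Contrapositively, for $L$ sufficiently large there is a continuous retraction $\Psi \colon \{J_\rho\le -L\}\to M_{k_1}*M_{k_2}$ sending $u\mapsto\mu_1(u)*\mu_2(u)$.

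\textbf{Test maps and min-max.} In the opposite direction I construct test functions $\Phi_\lambda\colon M_{k_1}*M_{k_2}\to H^1(M)$ by superimposing standard bubbles $\log\frac{\lambda^2}{(1+\lambda^2 d(x,x_i)^2)^2}$ for the positive component and suitably rescaled bubbles (of opposite sign, with different profile adapted to the exponent $-2$) for the negative component, and check that $J_\rho(\Phi_\lambda(z))\to-\infty$ uniformly in $z$, while $\Psi\circ\Phi_\lambda$ is homotopic to the identity on the join. Here the positive genus is essential: using a retraction of $M$ onto a non-contractible bouquet of circles one can prescribe the centres of the two families of bubbles independently on disjoint arcs, thereby decoupling the positive and negative concentrations. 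Since the join $M_{k_1}*M_{k_2}$ is non-contractible, the min-max level
\[
\alpha_\rho = \inf_{\gamma\in\Gamma_\rho}\sup_{z\in C(M_{k_1}*M_{k_2})} J_\rho(\gamma(z)),
\]
with $\Gamma_\rho$ the set of continuous extensions of $\Phi_\lambda$ to the cone $C(M_{k_1}*M_{k_2})$, is finite and strictly larger than $-L$.

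\textbf{Compactness and conclusion.} Lack of a Palais-Smale condition is dealt with by Struwe's monotonicity trick applied to the one-parameter family $t\mapsto J_{t\rho}$: for almost every $t$ in a small interval around $1$ one extracts a bounded Palais-Smale sequence for $J_{t\rho}$ converging to a solution $u_t$ of the perturbed problem. Letting $t\to 1$, the sequence $\{u_t\}$ either converges or blows up; the latter is ruled out by Theorem \ref{th:quantization}, since any blow-up would force the total mass to distribute into quanta $\sigma_1\in 4\N$, $\sigma_2\in 2\N$, hence $\rho_1\in 8\pi\N$ or $\rho_2\in 4\pi\N$, contradicting the hypothesis. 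The main obstacle I expect is the test-map construction: because of the asymmetric exponent $-2$ (versus $-1$ in the sinh-Gordon case), positive and negative bubbles enter $J_\rho$ at different scales, so a careful balancing of the bubble parameters and of the supports of the two bubble families along a non-contractible loop of $M$ is needed to guarantee both the divergence $J_\rho(\Phi_\lambda(z))\to-\infty$ and the homotopy $\Psi\circ\Phi_\lambda\simeq\mathrm{id}$ on the join.
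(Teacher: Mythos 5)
Your proposal reproduces the structure of the paper's proof in its first three steps --- the improved Moser--Trudinger inequality yields, via the alternative of Proposition \ref{p:altern} and the retractions $\Pi_i$ onto disjoint non-contractible curves $\gamma_1,\gamma_2$ furnished by the positive genus, a continuous map $\Psi$ from the low sublevels $J_\rho^{-L}$ to the join $(\gamma_1)_k*(\gamma_2)_l$; the test functions $\Phi_\lambda$ modeled on the join (with the asymmetric $\frac{1}{2}$-weight on the negative bubble exactly as in \eqref{bubble}) send $J_\rho\to-\infty$ uniformly; and $\Psi\circ\Phi_\lambda$ is homotopic to the identity. You are right that the asymmetry of the exponents is absorbed by the $-\frac12 v_2$ term in \eqref{bubble} and by keeping separate scales $\lambda_{1,s}=(1-s)\lambda$, $\lambda_{2,s}=s\lambda$; the paper's estimates \eqref{grad}, \eqref{exp1}--\eqref{average} confirm the balancing you anticipated. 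One small caveat: the paper does not retract onto the full barycentre join $M_{k_1}*M_{k_2}$; it composes with the push-forwards $(\Pi_i)_*$ \emph{before} landing in a join, precisely because $(\gamma_1)_k*(\gamma_2)_l \cong S^{2k+2l-1}$ makes the non-triviality of the topology immediate, and because the test map $\Phi_\lambda$ is naturally defined only on $(\gamma_1)_k*(\gamma_2)_l$.

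Where you genuinely diverge from the paper is the final step. You propose a cone-based min-max level together with Struwe's monotonicity trick applied to $t\mapsto J_{t\rho}$, extracting bounded Palais--Smale sequences for almost every $t$ near $1$ and then using the quantization of Theorem \ref{th:quantization} to rule out blow-up as $t\to1$. The paper instead argues by contradiction purely at the level of sublevel topology: from $\Psi\circ\Phi_\lambda\simeq\mathrm{id}$ it deduces $H_q\bigl((\gamma_1)_k*(\gamma_2)_l\bigr)\hookrightarrow H_q\bigl(J_\rho^{-L}\bigr)$, so $J_\rho^{-L}$ has non-trivial homology; if there were no solutions, Corollary \ref{top-arg} (a deformation lemma in the spirit of \cite{lucia}, whose validity rests on the compactness of Corollary \ref{cmpt}) would make $J_\rho^{-L}$ a deformation retract of $J_\rho^{L}$ for any $L$, and Corollary \ref{high} makes the high sublevel $J_\rho^{L}$ contractible --- a contradiction. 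Both routes rely on the same compactness input (Theorem \ref{th:quantization} $\Rightarrow$ Corollary \ref{cmpt}) and reach the same conclusion; the paper's version has the advantage of dispensing with the verification of a Palais--Smale-type condition and with the limiting procedure $t\to1$, packaging everything into the deformation lemma, while your min-max approach is closer to the classical variational mechanism and would also produce a min-max characterization of the solution.
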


\begin{rem}
Actually, using Morse theory we can also get a multiplicity result. Indeed, suppose $\rho_1 \in (8k\pi, 8(k + 1)\pi)$ and $\rho_2 \in (4l\pi, 4(l + 1)\pi)$, $k, l \in \N$ and let $g(M)>0$ be the genus of $M$. Then, for a generic choice of the metric $g$ and of the functions $h_1,h_2$ it holds
$$
 \#\bigr\{ \mbox{solutions of \eqref{eq3}} \bigr\} \geq \binom{k+g(M)-1}{g(M)-1} \binom{l+g(M)-1}{g(M)-1}.
$$
One can follow the argument in \cite{jev3} to deduce the latter estimate. To keep the paper shorter we will present just the existence result stated in Theorem \ref{th:existence}.
\end{rem}

\medskip

We follow here the argument introduced in \cite{bjmr} for the Toda system. Similarly as for the Liouville case \eqref{liouv2} one can use improved Moser-Trudinger inequalities to show that if $\rho_1<8k\pi$, $\rho_2<4l\pi$, $k,l\in\N$, and if $J_\rho(u)$ is large negative, then either $e^u$ (normalized) is close to $M_k$ or $e^{-2u}$ is close to $M_l$ in the distributional sense, see the definition in \eqref{M_k}. This alternative can be expressed by using the \emph{topological join} of $M_k$ and $M_l$. We recall that, the topological join of two topological sets is given by
\begin{equation}\label{join}
 A*B = \frac{ \Bigr\{ (a,b,s): \; a \in A,\; b \in B,\; s \in [0,1]  \Bigr\}}E,
\end{equation}
where $E$ is the following equivalence relation:
$$
(a_1, b,1) \stackrel{E}{\sim} (a_2,b, 1)  \quad \forall a_1, a_2
\in A, b \in B
  \qquad \quad \hbox{and} \qquad \quad
(a, b_1,0)  \stackrel{E}{\sim} (a, b_2,0) \quad \forall a \in A,
b_1, b_2 \in B.
$$
In this way we can map the low sublevels of $J_\rho$ into $M_k * M_l$: the idea is that the
join parameter $s$ expresses whether $e^{u}$ is closer to $M_k$ or $e^{-2u}$ is closer to $M_l$.

Next, we exploit the fact the genus of $M$ is positive to construct two disjoint simple non-contractible curves  $\gamma_1, \gamma_2$ such that $M$ retracts on each of them through continuous maps $\Pi_1, \Pi_2$, respectively. Via these retractions we restrict our target from $M_k * M_l$ to $(\gamma_1)_k *(\gamma_2)_l$ only. On the other way round, we can construct test functions modeled on $(\gamma_1)_k * (\gamma_2)_l$.

We can apply then a topological argument based on this analysis to get a solution to problem \eqref{eq3}. In order to run the topological argument one needs some compactness property: we will exploit the compactness of the set of solutions to \eqref{eq3}. This property is stated in Corollary \ref{cmpt} and it is deduced from our quantization result in Theorem \ref{th:quantization}. It is in this step that we have to suppose $\rho_1\notin8\pi\N$ and $\rho_2\notin 4\pi\N$.

\

\begin{remark} \label{rem:analog}
We point out that by a simple substitution we obtain analogous blow-up and existence results (with obvious modifications) for the following equation:
$$
	\Delta v+\wtilde h_1e^v-\wtilde h_2e^{-\frac 12 v}=0 \qquad \mathrm{in}~B_1\subset\R^2.
$$
\end{remark}

\medskip

The organization of this paper is as follows. In Section \ref{sec:quantization} we study the blow-up phenomenon related to equation \eqref{eq1} on bounded domains and we prove the quantization of the local masses stated in Theorem \ref{th:quantization}. In Section \ref{sec:boundary} we proceed further and under Dirichlet boundary condition we show that problem \eqref{eq2} has no blow-up at the boundary, see Theorem \ref{th:boundary}. Next, in Section \ref{sec:m-t} we move to the equation \eqref{eq3} defined on a compact surface and we get the related sharp Moser-Trudinger inequality stated in Theorem \ref{th:m-t}. Finally, in Section \ref{sec:existence} we introduce the variational argument to prove the general existence result, namely Theorem \ref{th:existence}.

\

\begin{center}
\textbf{Notation}
\end{center}

\

The symbol $B_r(p)$ stands for the open metric ball of
radius $r$ and center $p$. To simplify the notation we will write $B_r$ for balls which are centered at $0$. We will use the notation $a\sim b$ for two comparable quantities $a$ and $b$.

The average of $u$ will be indicated by $\ov u$. The sublevels of the functional $J_\rho$ will be denoted by
\begin{equation} \label{sub}
	J_\rho^a = \bigr\{ u\in H^1(M) \,:\, J_\rho(u) \leq a  \bigr\}.
\end{equation}
Denoting by $\mathcal M(M)$ the  set of all Radon measures on $M$ we consider the Kantorovich-Rubinstein distance on it:
\begin{equation} \label{dist}
	\dkr (\mu_1,\mu_2) = \sup_{\|f \|_{Lip}\leq 1} \left| \int_M f \, d\mu_1 - \int_M f\,d\mu_2 \right|, \qquad \mu_1,\mu_2 \in \mathcal M(M).
\end{equation}

\medskip

Throughout the paper the letter $C$ will stand for large constants which
are allowed to vary among different formulas or even within the same lines.
When we want to stress the dependence of the constants on some
parameter (or parameters), we add subscripts to $C$, as $C_\d$,
etc. We will write $o_{\alpha}(1)$ to denote
quantities that tend to $0$ as $\alpha \to 0$ or $\alpha \to
+\infty$; we will similarly use the symbol
$O_\alpha(1)$ for bounded quantities.

\

\section{Classification of the blow-up limits} \label{sec:quantization}

\medskip
\subsection{Some useful tools.}
In this subsection we list some lemmas which will be used in the proof of the quantization resul of Theorem \ref{th:quantization}. The proof of all the results can be found in \cite{jwy} with minor modifications. We start by the following selection process of the bubbling disks.
\begin{proposition}
\label{pr2.1}
Let $u_k$ be a sequence of blow-up solutions of \eqref{seq} that satisfy \eqref{a1} and \eqref{a3}, and suppose that $h_i^k$ satisfy \eqref{a2}. Then there exists finite sequence of points $\Sigma_k=\{x_1^k,\cdots,x_m^k\}$ (all $x_j^k\rightarrow0,~j=1,\cdots,m$) and positive $l_1^k,\cdots,l_m^k\rightarrow0$ such that, letting $M_{k,j}=\max\{u_k(x_j^k),-2u_k(x_j^k)\}$, we have
\begin{enumerate}
  \item $M_{k,j}=\max_{B_{l_j^k}(x_j^k)}\{|u_k|\}$~for~$j=1,\cdots,m.$
  \item $\exp\left(\frac12 M_{k,j}\right)l_j^k\rightarrow\infty$ for $j=1,\cdots,m.$
  \item Let $\varepsilon_{k,j}=e^{-\frac{1}{2}M_{k,j}}$. In each $B_{l_j^k}(x_j^k)$ we define the dilated functions
  \begin{align}
  \label{2.1}
  \begin{split}
  &v_1^k(y)=u_k\bigr(\varepsilon_{k,j}y+x_j^k\bigr)+2\log\varepsilon_{k,j}, \\
  &v_2^k(y)=-2u_k\bigr(\varepsilon_{k,j}y+x_j^k\bigr)+2\log\varepsilon_{k,j}.
  \end{split}
  \end{align}
  Then it holds that either $v_1^k$ converges to a function $v_1$ in $C_{loc}^2(\R^2)$ which satisfies the equation $\Delta v_1+e^{v_1}=0$ and $v_2^k$ tends to minus infinity over all compact subsets of $\R^2$ or $v_2^k$ converges to a function $v_2$ in $C_{loc}^2(\R^2)$ which satisfies the equation $\Delta v_2+2e^{v_2}=0$ and $v_1^k$ tends to minus infinity over all compact subsets of $\R^2.$
  \item There exists a constant $C_1>0$ independent of $k$ such that
  \begin{align*}
  \max\{u_k(x),-2u_k(x)\}+2\log\mathrm{dist}(x,\Sigma_k)\leq C_1, \qquad \forall x\in B_1.
  \end{align*}
\end{enumerate}
\end{proposition}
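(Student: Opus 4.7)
The plan is to implement an iterative selection scheme in the style of Chen--Lin, Li--Shafrir and Schoen \cite{chen-lin,ly,s2}, adapted to a two-component bubbling as in \cite{lin-wei-zh,jwy}. The algebraic observation that drives the analysis is that the rescaled pair defined in \eqref{2.1} satisfies the pointwise identity
\begin{equation*}
v_1^k(y)+\tfrac{1}{2}v_2^k(y)\equiv 3\log\varepsilon_{k,j},
\end{equation*}
so in fact $v_2^k$ is slaved to $v_1^k$ and the coupled equation collapses to the single relation
\begin{equation*}
\Delta v_1^k+\tilde h_1^k(y)\,e^{v_1^k}=\varepsilon_{k,j}^6\,\tilde h_2^k(y)\,e^{-2v_1^k},\qquad \tilde h_i^k(y):=h_i^k(\varepsilon_{k,j}y+x_j^k),
\end{equation*}
the right-hand side being an error term that is negligible wherever $v_1^k$ stays bounded, since $\varepsilon_{k,j}\to 0$. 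Rewriting with $v_2^k$ as the dominant unknown (via the same identity) produces the other alternative in conclusion (3).

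To obtain the first bubble I would choose $x_1^k$ as a global maximum point of $\max\{u_k,-2u_k\}$ on $\overline{B_1}$; by \eqref{a1} then $x_1^k\to 0$ and $M_{k,1}\to\infty$. In the case $M_{k,1}=u_k(x_1^k)$ the maximality yields $v_1^k\leq 0$ on the whole rescaled domain with $v_1^k(0)=0$, and in fact $-3M_{k,1}/2\leq v_1^k\leq 0$ from the symmetric bound on $-2u_k$. This lower bound gives $\varepsilon_{k,1}^6 e^{-2v_1^k}\leq 1$, hence $|\Delta v_1^k|$ is uniformly bounded in $L^\infty_{\mathrm{loc}}$; splitting $v_1^k$ into a harmonic part plus a $W^{2,p}$ correction and applying Harnack's inequality to the former produces a uniform $L^\infty_{\mathrm{loc}}$ bound on $v_1^k$. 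The slaving identity then forces $v_2^k\to -\infty$ uniformly on compact sets, the error term vanishes, and Br\'ezis--Merle's alternative \cite{bm} combined with the energy bound \eqref{a3} yields $C^2_{\mathrm{loc}}$--convergence of $v_1^k$ to a finite-mass solution of $\Delta v_1+e^{v_1}=0$. The symmetric case $M_{k,1}=-2u_k(x_1^k)$ is handled identically and produces the other limit equation $\Delta v_2+2e^{v_2}=0$. The radius $l_1^k$ is then taken as any sequence $l_1^k\to 0$ with $l_1^k/\varepsilon_{k,1}\to\infty$, chosen so that (1) continues to hold.

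The procedure is then iterated: if (4) fails for $\Sigma_k^{(j)}:=\{x_1^k,\ldots,x_j^k\}$, I would pick the next point $x_{j+1}^k$ to maximize the weighted quantity $\max\{u_k,-2u_k\}+2\log\mathrm{dist}(\cdot,\Sigma_k^{(j)})$ and repeat the rescaling analysis; the weight is exactly what guarantees that the rescaled ball around $x_{j+1}^k$ sits at the correct distance, at its own scale, from the previously selected bubbles. Termination in finitely many steps follows from the fact that each extracted bubble captures at least a fixed positive amount of the corresponding integral---namely $8\pi$ of $\int h_1^ke^{u_k}$ for an $e^{v_1}$-bubble and $4\pi$ of $\int h_2^ke^{-2u_k}$ for an $e^{v_2}$-bubble, by the classical classification of finite-mass solutions of the two limiting Liouville equations---while both total masses are uniformly bounded by \eqref{a3}.

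The main obstacle, and the step where real technical work is needed, is the verification at each later stage that the subdominant rescaled component still escapes to $-\infty$ on compact sets. At the very first step this was automatic from the slaving identity together with the pointwise bound $v_1^k\leq 0$; in the iterative steps, however, the maximality is only with respect to a weighted function, so one has to rule out the scenario in which a second bubble of a different type sits at essentially the same scale next to a previously extracted one. This is excluded by a careful choice of the radii $l_j^k$ combined with the quantitative weighted-maximality at each stage, exactly as in \cite{jwy}; the only modifications needed compared to the sinh-Gordon setting are the replacement of the slaving identity (and the corresponding constant $3\log\varepsilon_{k,j}$) and the appearance of the exponent $-2$ instead of $-1$ in the error term, which makes the $\varepsilon_{k,j}$-gain in the perturbation even stronger.
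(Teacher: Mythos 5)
Your proposal is essentially the same selection-process argument that the paper relies on: the paper itself gives no proof of Proposition \ref{pr2.1} but refers the reader to \cite{jwy} ``with minor modifications,'' and \cite{jwy} in turn follows \cite{chen-lin,ly,s2,lin-wei-zh}, exactly the lineage you invoke. The ``slaving identity'' $v_1^k+\tfrac12 v_2^k\equiv 3\log\varepsilon_{k,j}$ is the correct Tzitz\'eica analogue of the sinh-Gordon relation $v_1^k+v_2^k=4\log\varepsilon_{k,j}$, and your use of it is sound: maximality gives $v_1^k\le 0$ and $v_2^k\le 0$ on the rescaled domain, the identity then gives $v_1^k\ge -\tfrac32 M_{k,1}$, hence $\Delta v_1^k$ is uniformly bounded in $L^\infty_{\mathrm{loc}}$, the harmonic-plus-Newtonian-potential decomposition with Harnack yields local boundedness, and the identity forces $v_2^k\to-\infty$ since $\varepsilon_{k,j}\to 0$. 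The termination count via the classified masses $8\pi$ for $\Delta v_1+e^{v_1}=0$ and $4\pi$ for $\Delta v_2+2e^{v_2}=0$ matches the paper's stated lower bounds $\frac{1}{2\pi}\int h_1^k e^{u_k}>4$, $\frac{1}{2\pi}\int h_2^k e^{-2u_k}>2$.

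One remark on the ``main obstacle'' you flag: the concern that a new bubble might sit at comparable scale to a previously extracted one is in fact already ruled out by the selection criterion itself. If $x_{j+1}^k$ maximizes $\max\{u_k,-2u_k\}+2\log\mathrm{dist}(\cdot,\Sigma_k^{(j)})$ and this maximum $A_k\to\infty$, then writing $d_k=\mathrm{dist}(x_{j+1}^k,\Sigma_k^{(j)})$ and $\varepsilon_{k,j+1}=e^{-M_{k,j+1}/2}$ one gets $\log(d_k/\varepsilon_{k,j+1})=A_k/2\to\infty$ automatically, so the weight is asymptotically $1$ on compacta of the rescaled variable and the first-step analysis applies verbatim; and if instead $A_k$ stays bounded, the process stops and conclusion (4) holds. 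Spelling this out would close the only place where you defer to \cite{jwy}, but as written your sketch is consistent with what the paper itself outsources to that reference.
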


\medskip

The selection process gives a description of the blow-up solutions of (\ref{seq}) and in each of the bubbling disks, at least one component has energy with positive lower bound:
\begin{align*}
\frac{1}{2\pi}\int_{B_{l_j^k}(x_j^k)}h_1^ke^{u_k}>4 \quad \mathrm{or} \quad \frac{1}{2\pi}\int_{B_{l_j^k}(x_j^k)}h_2^ke^{-2u_k}>2.
\end{align*}
The fourth conclusion in Proposition \ref{pr2.1} provides us a control on the upper bound of the behavior of blow-up solutions outside the bubbling disks.

\begin{proposition}
\label{pr2.2}
For all $x_0\in B_1\setminus\Sigma_k,$ there exists $C_0$ independent of $x_0$ and $k$ such that
\begin{align*}
|u_k(x_1)-u_k(x_2)|\leq C_0\qquad\forall x_1,x_2\in B_{d(x_0,\Sigma_k)/2}(x_0).
\end{align*}
\end{proposition}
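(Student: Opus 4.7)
The plan is to prove this Harnack-type oscillation estimate by rescaling $u_k$ around $x_0$ at the scale $\rho := d(x_0, \Sigma_k)/2$ and reducing the problem to the classical Brezis-Merle concentration-compactness theorem for Liouville-type equations, following the blueprint of \cite{jwy} for the sinh-Gordon equation. Concretely, I introduce the shifted rescaled function
\[
w_k(y) := u_k(x_0 + \rho y) + 2\log\rho, \qquad y \in B_{3/2},
\]
which a direct computation shows satisfies the perturbed Liouville equation
\[
-\Delta w_k = h_1^k(x_0 + \rho y)\, e^{w_k} - \rho^6\, h_2^k(x_0 + \rho y)\, e^{-2 w_k}
\]
on $B_{3/2}$. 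Since $w_k$ differs from $u_k(x_0 + \rho\,\cdot)$ only by an additive constant, $\mathrm{osc}_{B_1} w_k = \mathrm{osc}_{B_\rho(x_0)} u_k$, and the claim reduces to showing $\mathrm{osc}_{B_1} w_k \leq C_0$ uniformly in $k$ and $x_0$.

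\emph{A priori bounds and the Brezis-Merle alternatives.} For $y \in B_{3/2}$, the point $x_0 + \rho y$ has distance at least $\rho/2$ from $\Sigma_k$ (since $d(x_0, \Sigma_k) = 2\rho$), and Proposition~\ref{pr2.1}(4) yields $e^{u_k}, e^{-2u_k} \leq C/\rho^2$ there. Translating, this gives $e^{w_k} \leq C$ and $\rho^6 e^{-2 w_k} \leq C$ pointwise on $B_{3/2}$ (in particular, $w_k$ is uniformly bounded above), while \eqref{a3} combined with a change of variables gives $\int_{B_{3/2}} e^{w_k}\,dy \leq C$ and $\int_{B_{3/2}} \rho^6 e^{-2w_k}\,dy \leq C$ uniformly. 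We are now in the setting of the classical concentration-compactness theorem for Liouville-type equations (see \cite{bm, li, li-sha}): after extracting a subsequence, one of three alternatives holds for $w_k$, namely (i) $w_k$ is locally bounded in $L^\infty$ and converges smoothly to a limit, (ii) $w_k$ drifts uniformly to $-\infty$ but $w_k - \max_K w_k$ remains bounded on compact sets, or (iii) $w_k$ concentrates at finitely many points of $B_{3/2}$. In both cases (i) and (ii) the oscillation of $w_k$ on $B_1$ is uniformly bounded, and the claim follows.

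\emph{Main obstacle: excluding concentration.} The decisive step is ruling out case (iii). The key observation is that any concentration point $y^\ast \in B_{3/2}$ for $w_k$ would correspond to a blow-up point $x^\ast := x_0 + \rho y^\ast$ of $u_k$ located in $B_{3\rho/2}(x_0)$; however, every point of $\Sigma_k$ has distance at least $2\rho > 3\rho/2$ from $x_0$, so such an $x^\ast$ would be disjoint from $\Sigma_k$, contradicting the maximality of the selection process in Proposition~\ref{pr2.1}, which is designed to capture all blow-up points and scales of $u_k$. The adaptation from the sinh-Gordon analysis of \cite{jwy} to the Tzitz\'eica case requires only minor modifications reflecting the different exponent in the second nonlinearity (the scaling factor $\rho^6$ rather than $\rho^3$); since both sides of the rescaled equation are still pointwise and $L^1$-bounded, the concentration-compactness machinery applies verbatim.
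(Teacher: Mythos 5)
Your rescaling computation is correct (the factor in front of $e^{-2w_k}$ is indeed $\rho^6$), and your exclusion of the concentration alternative is fine — in fact it is immediate from Proposition~\ref{pr2.1}(4), which gives $w_k\le C$ on $B_{3/2}$ without any further argument. The genuine gap is in the dichotomy you invoke: the Brezis--Merle theorem in alternative (ii) asserts only that $w_k\to-\infty$ uniformly on compact sets, \emph{not} that $w_k-\max_K w_k$ stays bounded there. The stronger claim you attribute to the theorem is false. For instance, $w_k(y)=-2k+ky_1$ is harmonic on $B_{3/2}$, hence solves $-\Delta w_k=V_k e^{w_k}$ with $V_k\equiv 0$, has $\int_{B_{3/2}}e^{w_k}\to 0$ and $w_k\to-\infty$ uniformly, yet $\operatorname{osc}_{B_1}w_k=2k\to\infty$. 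Such a harmonic drift is exactly what a purely local argument cannot rule out: once you rescale to the tiny ball $B_{3\rho/2}(x_0)$ you have thrown away all contact with $\partial B_1$, where the crucial oscillation bound $|u_k(x)-u_k(y)|\le C$ for $x,y\in\partial B_1$ from \eqref{a3} lives.

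The proof this paper relies on (from \cite{jwy}, itself modeled on \cite{lin-wei-zh}) works on the \emph{whole} ball $B_1$ via the Green representation
$$u_k(y)=\int_{B_1}G_{B_1}(y,\eta)\bigl(h_1^k e^{u_k}-h_2^k e^{-2u_k}\bigr)(\eta)\,d\eta + H_k(y),$$
where $H_k$ is the harmonic extension of $u_k|_{\partial B_1}$. The oscillation of the Green potential over $B_{\rho}(x_0)$ is estimated by splitting the integration region according to whether $\eta$ is near $x_0$ (where one uses the pointwise decay $|f_k|\le C/\mathrm{dist}(\eta,\Sigma_k)^2$ from Proposition~\ref{pr2.1}(4)) or away from $x_0$ (where one uses the $L^1$ bound from \eqref{a3} and the fact that $\log\frac{|y_2-\eta|}{|y_1-\eta|}$ is uniformly bounded once $|\eta-x_0|\gtrsim\rho$); the large constant $\log\frac{1}{\rho}$ cancels in the difference. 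The oscillation of $H_k$ is then bounded directly by the boundary oscillation hypothesis in \eqref{a3} via the maximum principle. This last ingredient — the boundary data — is precisely what your local rescaling discards, so the Brezis--Merle machinery alone cannot close the argument.
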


\medskip

The latter result is a Harnack-type estimates which describes the behavior of blowup solutions away from the blow-up points. In particular, let $x_k\in\Sigma_k$ and $\tau_k=\frac12d({x_k,\Sigma_k\setminus\{x_k\}})$, then for $x,y\in B_{\tau_k}(x_k)$ and $|x-x_k|=|y-x_k|$ we have $u_k(x)=u_k(y)+O(1)$ and hence $u_k(x)=\overline{u}_{x_k}(r)+O(1)$ where $r=|x_k-x|$ and
$$\overline{u}_{x_k}(r)=\frac{1}{2\pi}\int_{\partial B_r(x_k)}u_k.$$

We say $u_k$ or $-2u_k$ has fast decay on $x\in B_1$ if
$$u_k(x)+2\log\mathrm{dist}(x,\Sigma_k)\leq -N_k \quad \mbox{or} \quad -2u_k(x)+2\log\mathrm{dist}(x,\Sigma_k)\leq -N_k$$
hold for some $N_k\rightarrow\infty$, respectively. On the other hand, if
$$u_k(x)+2\log\mathrm{dist}(x,\Sigma_k)\geq -C \quad \mbox{or} \quad -2u_k(x)+2\log\mathrm{dist}(x,\Sigma_k)\geq-C,$$
for some $C>0$ independent of $k$, we say $u_k$ or $-2u_k$ has a slow decay at $x$, respectively. It is known from the following lemma that it is possible to choose $r$ such that both $u_k,-2u_k$ have fast decay property.
\begin{lem}
\label{le2.1}
For all $\e_k\rightarrow0$ with $\Sigma_k\subset B_{\e_k/2}(0),$ there exists $l_k\rightarrow0$ such that $l_k\geq 2\e_k$ and
\begin{equation*}
\max\{\bar u_k(l_k),-2\bar u_k(l_k)\}+2\log l_k\rightarrow-\infty,
\end{equation*}
where we are using the notation $\bar u_k(r):=\frac{1}{2\pi r}\int_{\p B_r} u_k.$
\end{lem}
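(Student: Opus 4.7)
The plan is to reduce the conclusion to a statement about the averages of the exponential nonlinearities via Jensen's inequality, and then to pick $l_k$ inside a long enough annulus using the energy bounds from \eqref{a3}.

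First I would observe that by Jensen's inequality applied to the circle $\partial B_r$,
\begin{equation*}
e^{\bar u_k(r)}\leq\frac{1}{2\pi r}\int_{\p B_r}e^{u_k}\,d\s,\qquad e^{-2\bar u_k(r)}\leq\frac{1}{2\pi r}\int_{\p B_r}e^{-2u_k}\,d\s,
\end{equation*}
so taking logarithms we get
\begin{equation*}
\bar u_k(r)+2\log r\leq\log\!\Bigl(\tfrac{r}{2\pi}\!\int_{\p B_r}e^{u_k}\,d\s\Bigr),\qquad -2\bar u_k(r)+2\log r\leq\log\!\Bigl(\tfrac{r}{2\pi}\!\int_{\p B_r}e^{-2u_k}\,d\s\Bigr).
\end{equation*}
Hence it suffices to exhibit $l_k\geq 2\e_k$ with $l_k\to0$ and
\begin{equation*}
F_k(l_k):=l_k\int_{\p B_{l_k}}\bigr(e^{u_k}+e^{-2u_k}\bigr)\,d\s\;\longrightarrow\;0,
\end{equation*}
since both terms are nonnegative and each one individually controls one of the two quantities inside the $\max$.

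Next I would bring in the energy bounds \eqref{a3} together with the lower bound $h_i^k\geq 1/C$ in \eqref{a2}, which yield
\begin{equation*}
\int_{B_1}\bigr(e^{u_k}+e^{-2u_k}\bigr)\,dx\leq C.
\end{equation*}
Writing this integral in polar coordinates and changing variable $t=\log r$ gives
\begin{equation*}
\int_{\log(2\e_k)}^{\log\sqrt{\e_k}}F_k(e^t)\,dt=\int_{2\e_k}^{\sqrt{\e_k}}F_k(r)\,\frac{dr}{r}=\int_{B_{\sqrt{\e_k}}\setminus B_{2\e_k}}\bigr(e^{u_k}+e^{-2u_k}\bigr)\,dx\leq C,
\end{equation*}
while the length of the integration interval is
\begin{equation*}
L_k:=\log\sqrt{\e_k}-\log(2\e_k)=-\tfrac12\log\e_k-\log 2\;\longrightarrow\;+\infty.
\end{equation*}
By the mean value theorem applied to the nonnegative integrand $F_k(e^t)$ on this interval, there exists $t_k\in[\log(2\e_k),\log\sqrt{\e_k}]$ with $F_k(e^{t_k})\leq C/L_k$. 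Setting $l_k:=e^{t_k}\in[2\e_k,\sqrt{\e_k}]$ we obtain simultaneously $l_k\geq 2\e_k$, $l_k\to 0$ and $F_k(l_k)\to 0$, which combined with the Jensen bounds above delivers the claim.

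There is no real obstacle here: the only delicate point is making sure the chosen range of radii is both admissible (staying above $2\e_k$, so still outside the small ball containing $\Sigma_k$) and long enough on the logarithmic scale to benefit from the pigeonhole argument. Choosing the upper endpoint as $\sqrt{\e_k}$ (or any quantity with $\log(\cdot/\e_k)\to+\infty$) achieves both. Note that the sign structure of $u_k$ is irrelevant to the argument, which is why exactly the same reasoning treats $u_k$ and $-2u_k$ in parallel.
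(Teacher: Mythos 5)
Your proof is correct. The paper itself defers the proof of this lemma to \cite{jwy}, and the argument you give — a pigeonhole on the logarithmic scale using the uniform energy bound coming from \eqref{a2}--\eqref{a3}, followed by Jensen's inequality to convert the small circle integral into decay of the averages — is precisely the standard argument in that line of papers. The Jensen step is a tidy choice: since only the one-sided bound $\bar u_k(r)\leq\log\bigl(\tfrac{1}{2\pi r}\int_{\partial B_r}e^{u_k}\bigr)$ is needed, it lets you avoid invoking the spherical Harnack estimate of Proposition~\ref{pr2.2}, which the reference may use for the same purpose.
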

Moreover, when we analyze the behavior of $u_k$ in each bubbling disk, we can still choose some ball in this bubbling disk such that both $u_k,-2u_k$ have fast decay property on the boundary of this ball, see Remark \ref{re2.1}.

\medskip

From \cite{jwy} and \cite{lin-wei-zh} we have seen that the definition of the fast and slow decay is essential for evaluating Pohozaev identities for equation \eqref{seq}. On the other hand, the Pohozaev identity also plays an important role for evaluating the local mass.  By straightforward computations we have the following Pohozaev identity:
\begin{align}
\label{2.2}
\begin{split}
&\int_{B_r}\left(x\cdot\nabla h_1^ke^{u_k}+\frac12 x\cdot\nabla h_2^ke^{-2u_k}\right)+\int_{B_r}\left(2h_1^ke^{u_k}+h_2^ke^{-2u_k}\right)\\
&=\int_{\partial B_r}r\left(|\partial_{\nu}u_k|^2-\frac12|\nabla u_k|^2\right)+\int_{\partial B_r}r\left(h_1^ke^{u_k}+\frac12h_2^ke^{-2u_k}\right).
\end{split}
\end{align}
It is possible to choose suitable $r=l_k\rightarrow 0$ such that
$$\frac{1}{2\pi}\int_{B_{l_k}}h_1^ke^{u_k}=\sigma_1+o(1), \qquad \frac{1}{2\pi}\int_{B_{l_k}}h_2^ke^{-2u_k}=\sigma_2+o(1)$$
and such that both $u_k,-2u_k$ have fast decay property on $\partial B_{l_k},$ where $\sigma_i$ are introduced in \eqref{localmass}. We point out that the fast decay property is important because it leads to the second term on the right hand side of \eqref{2.2} is $o(1)$. By making full use of \eqref{2.2} we can get
\begin{equation}
\label{2.3}
(\sigma_1-\sigma_2)^2=4\left(\sigma_1+\frac{\sigma_2}{2}\right).
\end{equation}
For the detail proof of (\ref{2.3}) we refer the readers to  Proposition 3.1 in \cite{jwy}. Furthermore, we have the following remark which will be used frequently in the forthcoming argument.

\begin{remark}
\label{re2.1}
We have already observed that the fast decay property is crucial in evaluating the Pohozaev identity \eqref{2.2}. Moreover, let $\Sigma_k'\subseteq\Sigma_k$ and assume that
$$\mathrm{dist}\bigr(\Sigma_k',\partial B_{l_k}(p_k)\bigr)=o(1)\,\mathrm{dist}\bigr(\Sigma_k\setminus\Sigma_k',\partial B_{l_k}(p_k)\bigr).$$
If both components $u_k, -2u_k$ have fast decay on $\partial B_{l_k}(p_k)$, namely
$$
	\max\{u_k(x),-2u_k(x)\} \leq -2\log|x-p_k| -N_k, \qquad  \mbox{for } x\in \partial B_{l_k}(p_k),
$$
for some $N_k\to+\infty$. Then, we can evaluate a local Pohozaev identity as in \eqref{2.2} and get
\begin{align*}
\bigr(\tilde{\sigma}_1^k(l_k)-\tilde{\sigma}_2^k(l_k)\bigr)^2=4\left(\tilde{\sigma}_1^k(l_k)+\frac{\tilde{\sigma}_2^k(l_k)}{2}\right)+o(1),
\end{align*}
where
\begin{align*}
&\tilde{\sigma}_1^k(l_k) = \frac{1}{2\pi}\int_{B_{l_k}(p_k)}h_1^k e^{u_k} \\
&\tilde{\sigma}_2^k(l_k) = \frac{1}{2\pi}\int_{B_{l_k}(p_k)}h_2^k e^{-u_k}.
\end{align*}
Observing that if $B_{l_k}(p_k) \cap \Sigma_k=\emptyset$ then $\tilde{\sigma}_i^k(l_k)=o(1), i=1,2$ and the above formula clearly holds.
\end{remark}

\medskip

Finally, we state a recent result on the total energy of the following equation
\begin{equation}
\label{2.4}
\left\{\begin{array}{l}
\Delta u+2 e^u=\sum_{j=1}^N4\pi n_j\delta_{p_j} \qquad \mathrm{in}~\mathbb{R}^2,\\
\int_{\mathbb{R}^2}e^u<+\infty,
\end{array}\right.
\end{equation}
given in \cite{lin-wei-zh2}, which will be used later on.
\medskip

\noindent {\bf Theorem A.} \cite{lin-wei-zh2}
Let $u$ be a solution of equation (\ref{2.4}), where $p_j,\,j=1,\cdots,N$ are distinct points in $\mathbb{R}^2$ and $n_j\in\mathbb{N},j=1,\cdots,N.$ Then
\begin{align*}
\frac{1}{2\pi}\int_{\mathbb{R}^2}e^u=2n,
\end{align*}
for some $n\in\mathbb{N}.$

\medskip

\subsection{Description of solutions around each blow-up point.}
To understand the concentration of energy we start from any fixed point of $\Sigma_k.$ Without loss of generality we may assume that $0\in\Sigma_k$ by considering a suitable translation. Let $\tau_k=\frac12\mathrm{dist}(0,\Sigma_k\setminus\{0\})$. Let
$$\sigma_1^k(r)=\frac{1}{2\pi}\int_{B_r(0)}h_1^ke^{u_k}, \qquad \sigma_2^k(r)=\frac{1}{2\pi}\int_{B_r(0)}h_2^ke^{-2u_k},$$
for $0<r\leq\tau_k$ and $\overline{u}_k(r)=\frac{1}{2\pi r}\int_{\partial B_r(0)}u_k.$ By using equation \eqref{seq} we get the following key property:
\begin{align} \label{deriv}
\frac{d}{dr}\overline{u}_k(r)=\frac{1}{2\pi r} \int_{\p B_r} \frac{\p u_k}{\p \nu} = \frac{1}{2\pi r} \int_{B_r} \D u_k = \frac{-\sigma_1^k(r)+\sigma_2^k(r)}{r}.
\end{align}
Moreover, from the selection process we have
\begin{align*}
\max\{u_k(x),-2u_k(x)\}+2\log|x|\leq C, \qquad |x|\leq \tau_k.
\end{align*}
If both components have fast decay on $\partial B_r(0)$ for $r\in (0,\tau_k)$, then $\sigma_1^k(r),\sigma_2^k(r)$ satisfy
\begin{align*}
(\sigma_1^k(r)-\sigma_2^k(r))^2=4\left(\sigma_1^k(r)+\frac{\sigma^k_2(r)}{2}\right),
\end{align*}
see Remark \ref{re2.1}. Furthermore, we have the following result on the description of the energy concentration in $B_{\tau_k}(0).$

\begin{proposition}
\label{pr2.3}
Suppose (\ref{seq})-(\ref{a3}) hold for $u_k$ and $h_i^k$. For any $r_k$ in $(0,\tau_k)$ such that both $u_k,-2u_k$ have fast decay on $\partial B_{r_k},$ i.e,
$$\max\{u_k(x),-2u_k(x)\}\leq-2\log|x|-N_k, \qquad \mathrm{for}~|x|=r_k~\mathrm{and~some}~N_k\rightarrow\infty,$$
we have that $(\sigma_1^k(r_k),\sigma_2^k(r_k))$ is a small perturbation of one of the two following types:
\begin{align*}
\bigr(2m(3m-1),2(3m-1)(m-1)\bigr) \quad \mathrm{or} \quad \bigr(2(3m-2)(m-1),2(3m-5)(m-1)\bigr),
\end{align*}
for some $m\in \mathbb{Z}$, excluding the case $(0,0)$. In particular, the first component is multiple of $4+o(1)$ while the second component is multiple of $2+o(1)$.

On $\partial B_{\tau_k}$ either both $u_k, -2u_k$ have fast decay, or one component has fast decay while the other one has not fast decay. Suppose for example $-2u_k$ has not the fast decay property, i.e.
\begin{align*}
-2u_k(x)+2\log|x|\geq-C, \qquad \mbox{for } |x|=\tau_k \mbox{ and some } C>0,
\end{align*}
while for $u_k$ it holds
\begin{align*}
u_k(x) \leq-2\log|x|-N_k, \qquad \mbox{for } |x|=s_k \mbox{ and some } N_k\rightarrow\infty.
\end{align*}
Then we have $\sigma_1^k(\tau_k)\in 4\mathbb{N} + o(1)$ (in the other case we have $\sigma_2^k(\tau_k)\in 2\mathbb{N} + o(1)$).

In particular, in any case the local energy in $B_{\tau_k}$ of at least one of the two components $u_k, -2u_k$ is a perturbation of a multiple of $4$ (for the first component) or $2$ (for the second component).
\end{proposition}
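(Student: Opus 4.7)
\medskip

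My plan is an inductive bubble-grouping scheme adapted from the sinh-Gordon analysis (cf.\ Proposition 3.1 of \cite{jwy}), with careful bookkeeping to handle the asymmetry between $e^u$ and $e^{-2u}$. The first step is to apply Proposition \ref{pr2.1} to exhibit the finite collection of bubbling disks inside $B_{\tau_k}$: on each disk one of the two rescaled components converges to a finite-mass solution of $\Delta v_1 + e^{v_1} = 0$ or $\Delta v_2 + 2 e^{v_2} = 0$ on $\R^2$, and the Chen--Li classification yields a contribution of exactly $4$ to $\sigma_1^k$ (when $u_k$ dominates) or exactly $2$ to $\sigma_2^k$ (when $-2u_k$ dominates), while the subordinate component carries only $o(1)$ mass there.

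\medskip

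The second step is iterative clustering: I would group together bubbles whose mutual distances are much smaller than their distances to any external bubble, and for each cluster select, via Lemma \ref{le2.1}, an intermediate radius $\rho$ on whose boundary \emph{both} $u_k$ and $-2u_k$ have fast decay. At such a radius, the local Pohozaev identity of Remark \ref{re2.1} applies and gives
$$
\bigl(\tilde{\sigma}_1^k(\rho)-\tilde{\sigma}_2^k(\rho)\bigr)^2 = 4\,\tilde{\sigma}_1^k(\rho) + 2\,\tilde{\sigma}_2^k(\rho) + o(1).
$$
The third and key step is integer quantization: after translating and rescaling $u_k$ at scale $\rho$ so that the dominant component has bounded oscillation on $\partial B_\rho$, the subordinate component tends to $-\infty$ and drops out of the limit, while the already-processed inner clusters appear as point singularities with integer coefficients (inductive hypothesis). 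The limit equation is then a singular Liouville equation $\Delta v + c\,e^v = \sum_j 4\pi n_j\, \delta_{p_j}$ with $n_j\in\N$ and $c\in\{1,2\}$, and Theorem~A gives that the total mass of the dominant component at scale $\rho$ is a multiple of $4$ (respectively $2$), up to $o(1)$.

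\medskip

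Iterating up to the radius $r_k$, where by hypothesis both components have fast decay, I get that either $\sigma_1^k(r_k) \in 4\N + o(1)$ or $\sigma_2^k(r_k) \in 2\N + o(1)$. Substituting the integer value into the Pohozaev quadratic relation produces a quadratic equation for the remaining mass whose non-negative solutions are exactly the two parametric families
$$
 \bigl(2m(3m-1),\,2(3m-1)(m-1)\bigr) \quad \mbox{and} \quad \bigl(2(3m-2)(m-1),\,2(3m-5)(m-1)\bigr),
$$
excluding the trivial case; this is a direct algebraic verification via the quadratic formula. For the last assertion about $\partial B_{\tau_k}$, if both components have fast decay we apply the previous result directly. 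Otherwise, suppose only $u_k$ has fast decay on $\partial B_{\tau_k}$; then the $u_k$-bubbles inside $B_{\tau_k}$ are separated from those at neighboring blow-up points by this decay, so Step~3 applied to $u_k$ alone still yields $\sigma_1^k(\tau_k) \in 4\N + o(1)$, and symmetrically in the other case.

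\medskip

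The main obstacle I expect is Step~3: at each grouping scale one must verify that the scaled limit is genuinely a singular Liouville equation with integer singularity coefficients, so Theorem~A applies. This requires excluding that subordinate-component mass located at smaller scales ``leaks'' into the dominant equation in the limit, and it is precisely where the fast-decay property for both components on the grouping boundary enters decisively.
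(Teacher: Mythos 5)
Your proposal attacks Proposition~\ref{pr2.3} with a bubble-grouping and singular-Liouville quantization scheme, but this misreads the structure of the statement. By definition $\tau_k=\frac12\,\mathrm{dist}(0,\Sigma_k\setminus\{0\})$, so the ball $B_{\tau_k}$ contains exactly \emph{one} point of $\Sigma_k$, namely $0$, and hence exactly one bubbling disk from Proposition~\ref{pr2.1}. There is therefore no ``finite collection of bubbling disks'' inside $B_{\tau_k}$ to exhibit and no ``iterative clustering'' to perform; the combination of multiple disks into groups, and the use of Theorem~A to quantize a residual singular Liouville limit, is precisely what the paper carries out in the \emph{subsequent} subsection (``Bubbling groups and the conclusion'') \emph{after} Proposition~\ref{pr2.3} has been established. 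You have effectively imported the machinery for a different step of the argument.

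The paper's actual proof of Proposition~\ref{pr2.3} is a single-bubble cascade over scales. After rescaling by $\delta_k=e^{-\frac12\max_{B_{\tau_k}}\max\{u_k,-2u_k\}}$, the Chen--Li classification gives the base value $(\sigma_1^k,\sigma_2^k)=(4+o(1),o(1))$ (or $(o(1),2+o(1))$) at scale $\delta_kR_k$. Then, using the radial derivative identity \eqref{deriv}, one tracks the quantities $\bar u_k(r)+2\log r$ and $-2\bar u_k(r)+2\log r$: the slow-decay component picks up a definite amount of energy (bounded below by a fixed constant, via the estimates quoted from \cite{jwy}), both components eventually regain fast decay at a larger radius, and the Pohozaev relation of Remark~\ref{re2.1} evaluated there forces the pair of local masses onto one of the two parametric families. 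The process alternates which component is slow, each step strictly increases one coordinate, and the uniform energy bound from \eqref{a3} terminates it after finitely many steps. No singular Liouville limit and no Theorem~A are invoked at this stage.

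A further technical confusion in your Step~3: you propose to rescale at a radius $\rho$ where \emph{both} components have fast decay and then extract a singular Liouville limit for the dominant component. But at a radius where both components have fast decay, the rescaled functions tend to $-\infty$ locally uniformly away from the concentration point, so neither of them converges to a finite nontrivial limit; Theorem~A would have nothing to act on. The radius at which one extracts a singular Liouville equation is one where the relevant component has \emph{slow} decay (giving a bounded limit away from the singular set), while the Pohozaev identity is evaluated at radii where both components have fast decay (so the boundary terms vanish). These are different radii and play different roles, and conflating them is where your argument breaks down. The final algebraic step (Pohozaev plus an integrality constraint yields the two families) is correct and matches the paper, but the quantization input feeding into it must come from the scale cascade just described, not from grouping disks that do not exist inside $B_{\tau_k}$.
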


\begin{proof}
The proof is mainly followed by the argument in \cite[Proposition 5.1]{lin-wei-zh} and \cite[Proposition 4.1]{jwy}, we will only give the key steps here. Let $-2\log\delta_k=\max_{x\in B_{\tau_k}(0)}\max\{u_k(x),-2u_k(x)\}.$ Let us define
\begin{align*}
\begin{split}
&v_1^k(y)=u_k(\delta_ky)+2\log \delta_k, \\
&v_2^k(y)=-2u_k(\delta_ky)+2\log\delta_k,
\end{split} \qquad |y|\leq\tau_k/\delta_k.
\end{align*}
As in Proposition \ref{pr2.1} it holds that one of $v_i^k$ converges and the other one tends to minus infinity over the compact subsets of $\mathbb{R}^2$. Without loss of generality we assume that $v_1^k$ converges to $v_1$ in $C_{loc}^2(\mathbb{R}^2)$ and $v_2^k$ converges to minus infinity over any compact subset of $\R^2.$ Then by the quantization of the limit function $\D v_1 + e^{v_1}=0$ in $\R^2$ we can choose $R_k\rightarrow\infty$ such that
\begin{equation}
\label{2.5}
\frac{1}{2\pi}\int_{B_{R_k}}h_1^k(\delta_ky)\,e^{v_1^k}=4+o(1), \qquad \frac{1}{2\pi}\int_{B_{R_k}}h_2^k(\delta_ky)\,e^{v_2^k}=o(1).
\end{equation}
For $r\geq R_k$ we clearly have
\begin{equation*}
\sigma_i^k(\delta_kr)=\frac{1}{2\pi}\int_{B_r}h_i^k(\delta_ky)\,e^{v_i^k}.
\end{equation*}
Then we get $\sigma_1^k(\delta_kR_k)=4+o(1)$ and $\sigma_2^k(\delta_kR_k)=o(1).$ Now, we consider the energy change from $B_{\delta_kR_k}$ to $B_{\tau_k}$. First we note that on $\partial B_{\delta_kR_k}$, by \eqref{deriv} and the latter estimate of the local energies we get
$$\frac{d}{dr}(-2\overline{u}_k(r)+2\log r)>0,$$
which means that $-2u_k$ may become a slow decay component when $r$ increases. The first possibility in $B_{\tau_k}$ is that
\begin{equation}
\label{2.6}
\sigma_1^k(\tau_k)=4+o(1), \qquad \sigma_2^k(\tau_k)=o(1),
\end{equation}
which means $-2u_k$ does not become a slow decay component. Indeed, \cite[Lemma 4.1]{jwy} it is proved for the sinh-Gordon equation that if no component changes to be a slow decay component, the energy of each component only changes by $o(1).$ We can modify the argument to get the same conclusion for the Tzitz$\acute{e}$ica equation.

\medskip

Suppose now $-2u_k$ become a slow decay component before $r$ reaches $\tau_k$. Suppose at some $s>r,$
\begin{equation*}
-2\bar u_k(s)+2\log s\geq -C
\end{equation*}
for some $C>0$ very large. Observe that $-2u_k$ starts to increase its energy but the energy of $u_k$ can not change much because $\frac{d}{dr}(\bar u_k(r)+2\log r)$ is still negative. If $\tau_k/s\rightarrow\infty,$ which means that $\tau_k$ is very large compared with $s$, then we can find $N>1$ such that on $\partial B_{Ns}$
\begin{align}
&\sigma_2^k(Ns)\geq 5, \qquad \sigma_1^k(Ns)=4+o(1),\label{est}\\
&\bar u_k(Ns)+2\log(Ns)\leq-N_k, \qquad \mathrm{for~some}~N_k\rightarrow\infty,\nonumber\\
&\frac{d}{dr}(-2\bar u_k(r)+2\log r)\mid_{r=Ns}<0, \qquad \frac{d}{dr}(\bar u_k(r)+2\log r)\mid_{r=Ns}>0,\nonumber
\end{align}
see \cite[Lemma 4.2]{jwy}. Roughly speaking, from $r=s$ to $r=Ns$ the energy of $-2u_k$ increase and hence the derivative of $-\bar  u_k(r)+2\log(r)$ changes from positive to negative. By Proposition \ref{pr2.2} it is possible to show that $u_k$ it still has fast decay and hence its energy does not change. Since $\tau_k/s\rightarrow\infty$ we can find $N_k'$ tending to $+\infty$ slowly such that $N_k's\leq\tau_k/2$ and on $\p B_{N_k's}$ both $u_k$ and $-2u_k$ have fast decay property, see again \cite[Lemma 4.2]{jwy}. Evaluating the Pohozaev identity on $\partial B_{N_k's}$, see Remark \ref{re2.1}, and taking into account the estimate \eqref{est} we deduce
\begin{equation} \label{est2}
\sigma_{1}^k(N_k's)=4+o(1), \qquad \sigma_2^k(N_k's)=10+o(1).
\end{equation}
For the case $\tau_k$ is only comparable to $s$, then on $\p B_{\tau_k}$ we have $-2u_k$ is a slow decay component and $\sigma_1^k(\tau_k)=4+o(1).$

\medskip

On $\p B_{N_k's}$ we have by \eqref{deriv} and \eqref{est2}
\begin{align*}
\frac{d}{dr}(\bar u_k(r)+2\log r)=\frac{6+o(1)}{r}, \qquad \frac{d}{dr}(-2u_k(r)+2\log r)=\frac{-12+o(1)}{r}, \qquad r=N_k's.
\end{align*}
At this point the role of $u_k, -2u_k$ is exchanged and $u_k$ may become a slow decay component for large $r$. Then as $r$ grows from $N_k's$ to $\tau_k$, as before either there is no change of energy up to $\tau_k$ or $u_k$ change to be slow decay at some $N_k's \ll \tau_k$ while $-2u_k$ does not changed its energy: in the latter case we can find as before $L_k$ with $L_kN_k's\leq\frac{\tau_k}{2}$ such that both components have fast decay on $\p B_{L_kN_k's}$. The Pohozaev identity gives us
\begin{equation*}
\sigma_1^k(L_kN_k's)=20+o(1), \qquad \sigma_2^k(L_kN_k's)=10+o(1).
\end{equation*}
Since after each step one of the local masses changes by a positive number, using the uniform bound on the energy the process stops after finite steps. Eventually we can get Proposition \ref{pr2.3}.
\end{proof}

\medskip

\subsection{Bubbling groups and the conclusion}
After analyzing the behavior of the bubbling solution $u_k$ in each bubbling disk, we turn to consider the combination of the bubbling disks in a group. The concept of group for this kind of problems was first introduced in \cite{lin-wei-zh}. Roughly speaking, the groups are made of points in $\Sigma_k$ which are relatively close to each other but relatively far away from the other points in $\Sigma_k$.

\medskip

\noindent {\bf Definition.} Let $G=\{p_1^k,\cdots,p_q^k\}$ be a subset of $\Sigma_k$ with more than one point in it. $G$ is called a group if
\begin{enumerate}
  \item dist$(p_i^k,p_j^k)\sim $ dist$(p_s^k,p_t^k)$,
 where $p_i^k,p_j^k,p_s^k,p_t^k$ are any points in $G$ such that $p_i^k\neq p_j^k$ and $p_t^k\neq p_s^k.$ \vspace{0.3cm}

  \item $\dfrac{\mbox{dist}(p_i^k,p_j^k)}{\mbox{dist}(p_i^k,p_k)}\rightarrow0$,
  for any $p_k\in\Sigma_k\setminus G$ and for all $p_i^k,p_j^k\in G$ with $p_i^k\neq p_j^k.$
\end{enumerate}

\medskip

We start by noticing that by Proposition \ref{pr2.2} if both $u_k, -2u_k$ have fast decay around one of the disks in a group, they are forced to have fast decay around all the disks of this group. More precisely, take $B_{\tau_1^k}(x_1^k),\cdots,B_{\tau_m^k}(x_m^k)$ in some group, where $\tau_j^k=\frac12\mathrm{dist}(x_j^k,\Sigma_k\setminus\{x_j^k\})$. By the definition of group, all the $\tau_l^k,~l=1,\cdots,m$ are comparable. Suppose both $u_k, -2u_k$ have fast decay: then we can find $N_k\rightarrow\infty$ such that all the disks in this group are contained for example in $B_{N_k\tau_1^k}(0)$ and
\begin{align*}
\sigma_1^k\left(N_k\tau_1^k\right)=\sum_{j=1}^m\sigma_1^k\left(B_{\tau_{j}^k}(x_j^k)\right)+o(1),\qquad \sigma_2^k\left(N_k\tau_1^k\right)=\sum_{j=1}^m\sigma_2^k\left(B_{\tau_{j}^k}(x_j^k)\right)+o(1),
\end{align*}
where
$$\sigma_1^k\left(B_{\tau_{j}^k}(x_j^k)\right)=\frac{1}{2\pi}\int_{B_{\tau_{j}^k}(x_j^k)}h_1^ke^{u_k}, \qquad
\sigma_2^k\left(B_{\tau_{j}^k}(x_j^k)\right)=\frac{1}{2\pi}\int_{B_{\tau_{j}^k}(x_j^k)}h_2^ke^{-2u_k}.$$
Roughly speaking, the energy contribution comes just from the energy in the bubbling disks. Since both components have fast decay, Proposition \ref{pr2.3} asserts that around each bubbling disk the local energy of $u_k$ is multiple of $4+o(1)$ and the one of $-2u_k$ is multiple of $2+o(1)$. From the above formula we get that $\sigma_1^k(N_k\tau_{1,k})$ is multiple of $4+o(1)$ and $\sigma_2^k(N_k\tau_{1,k})$ is multiple of $2+o(1)$. By using the Pohozaev identity again, we can get $\left(\sigma_1^k(N_k\tau_{1,k}),\sigma_2^k(N_k\tau_{1,k})\right)$ is a small perturbation of
$$\bigr(2n_k(3n_k-1),2(3n_k-1)(n_k-1)\bigr) \quad \mathrm{or} \quad \bigr(2(3n_k-2)(n_k-1),2(3n_k-5)(n_k-1)\bigr),$$
for some $n_k\in\mathbb{Z}.$

\medskip

We consider now the more difficult case when only one component has fast decay in a group. Suppose $u_k$ has fast decay, while $-2u_k$ has slow decay in each bubbling disk (the other case can be studied similarly). In this case Proposition \ref{pr2.3} yields $\sum_{j=1}^m\sigma_1^k(B_{\tau_j^k}(x_j^k))$ is a multiple of $4+o(1)$. Using Proposition~\ref{pr2.2} again we can choose $N_k\rightarrow\infty$ so that $u_k$ is still fast decay on $\partial B_{N_k\tau_k}(0)$ and
\begin{align*}
\sigma_1^k\bigr(B_{N\tau_k}(0)\bigr)=\sum_{j=1}^m\sigma_1^k\left(B_{\tau_j^k}(x_j^k)\right)+o(1).
\end{align*}
On the other hand, we consider the following scaling
\begin{align*}
\begin{split}
&\tilde{v}_1^k(y)=u_k(\tau_ky)+2\log\tau_k, \\
&\tilde{v}_2^k(y)=-2u_k(\tau_ky)+2\log\tau_k,
\end{split} \qquad |y|\leq N_k\tau_k^{-1}.
\end{align*}
Then we have
\begin{align}
\label{2.7}
\Delta_y\tilde{v}_1^k(y)+\tilde{h}_1^k(y)\,e^{\tilde{v}_1^k(y)}-\tilde{h}_2^k(y)\,e^{\tilde{v}_2^k(y)}=0, \qquad |y|\leq N_k\tau_k^{-1},
\end{align}
where $\tilde{h}_i^k(y)=h_i^k(\tau_ky),~i=1,2.$ Let $q_j^k$ be the images of $x_j^k$ after scaling. One can see that $\tilde{v}_1^k$ has still fast decay on $\partial B_{N_k}(0)$ while $\tilde{v}_2^k$ has slow decay on it. Moreover, it is not difficult to show that $\tilde{h}_1^ke^{\tilde{v}_1^k}\rightharpoonup \sum_{j=0}^m8\pi\, n_{1,j}\delta_{q_j}$, where $n_{1,j}\in \mathbb{N}$, while $\tilde{h}_2^ke^{\tilde{v}_2^k}\rightharpoonup\sum_{j=0}^m4\pi\, n_{2,j}\delta_{q_j}+F$, where $n_{2,j}\in \mathbb{N}$ and $F\in L^1(\R^2)$. Then, we want to prove that the integral of $F$ in $\R^2$ is multiple of $4\pi.$

\medskip

Since $\tilde{v}_1^k$ has fast decay and $\tilde{v}_2^k$ has slow decay, one can see from the argument of Proposition \ref{pr2.3} that it holds $8\pi \,n_{1,j}>4\pi \, n_{2,j}$. Recalling equation \eqref{2.7}, by the above argument we can see that $\tilde{v}_2^k$ converges to $\tilde{v}_2$ in $\R^2\setminus\{q_1,\dots,q_m\}$, where $\tilde{v}_2$ satisfies
\begin{equation}
\label{2.8}
\Delta\tilde{v}_2+2e^{\tilde{v}_2}=\sum_{j=0}^m4\pi \,\tilde{n}_j\delta_{q_j} \qquad \mathrm{in}~\R^2,
\end{equation}
where $\tilde{n}_j\in\mathbb{N}.$ Exploiting the global quantization of Theorem A we get $\frac{1}{2\pi}\int_{\R^2}e^{\tilde{v}_2}=2\tilde{n}$ for some $\tilde{n}\in\mathbb{N}.$ This gives the quantization of $F$ and therefore we get $\sigma_2^k(\tau_kL)=2n+o(1)$ for some $n\in\mathbb{N}.$

\medskip

Let $2\tau_kL_k$ be the distance from the group we are considering to the nearest group different from it. By the definition of group we have $L_k\rightarrow\infty.$ As before we can find $\tilde{L}_k\leq L_k,~\tilde{L}_k\rightarrow\infty$ slowly such that the energy of $\tilde{v}_i^k$ in $B_{\tilde{L}_k}(0)$ does not change so much and both $\tilde{v}_i^k$ have fast decay on $\partial B_{\tilde{L}_k}(0)$:
\begin{align} \label{es3}
&\sigma_1^k(\tau_k \tilde L_k)=4\bar n + o(1), \qquad \sigma_2^k(\tau_k \tilde L_k)=2 n + o(1), \qquad \mbox{for some } \bar n, n \in\mathbb{N}
\end{align}
and
$$
\tilde{v}_i^k(y) \leq -2\log\tilde{L}_k-N_k, \qquad \mbox{for } |y|=\tilde{L}_k, \quad i=1,2,
$$
for some $N_k\to+\infty$. Since on $\partial B_{\tilde{L}_k}$ both components $\tilde{v}_1^k, \tilde{v}_2^k$ have fast decay, we can compute the local Pohozaev identity of Remark \ref{re2.1}2. Using the estimate \eqref{es3} we get that $(\sigma_1^k(\tau_k\tilde{L}_k),\sigma_2^k(\tau_k\tilde{L}_k))$ is a $o(1)$ perturbation of one of the two following types:
\begin{equation}\label{2.9}
\bigr(2\tilde m(3\tilde m-1),2(3\tilde m-1)(\tilde m-1)\bigr) \quad \mbox{or} \quad \bigr(2(3\tilde m-2)(\tilde m-1),2(3\tilde m-5)(\tilde m-1)\bigr),
\end{equation}
for some $\tilde m\in\mathbb{Z}.$ Now, as $r$ grows from $\tau_k\tilde{L}_k$ to $\tau_kL_k$ (and we reach the second group) we can follow the same argument of Proposition \ref{pr2.3} to get analogous conclusions. In particular, in any case at the boundary of a group at least one of the two components $u_k, -2u_k$ has fast decay and the local energy in this group of such component is a perturbation of a multiple of $4$ (for the first component) or $2$ (for the second component).

\medskip

\no \emph{Proof of Theorem \ref{th:quantization}.}
To get the desired energy quantization we are left with combining the groups. The process is very similar to the combination of bubbling disks as we have done before and we refer the readers to \cite{jwy} for more details. With the combination of the groups we further include the groups which are far away from $0$. From the selection process it is known that we have only finite bubbling disks and as a result the combination procedure will terminate in finite steps. Finally, we can take $s_k\rightarrow0$ with $\Sigma_k\subset B_{s_k}(0)$ such that both components $u_k,-2u_k$ have fast decay on $\p B_{s_k}(0).$ Therefore, we have that $\sigma_1^k(s_k),\sigma_2^k(s_k)$ is a small perturbation of one of the two types:
$$\bigr(2m(3m-1),2(3m-1)(m-1)\bigr), \qquad \bigr(2(3m-2)(m-1),2(3m-5)(m-1)\bigr),$$
for some $m\in\mathbb{Z}.$ On the other hand, we have
$$\sigma_i=\lim_{k\rightarrow\infty}\sigma_i^k(s_k),\qquad i=1,2.$$
It follows that $\sigma_1,\sigma_2$ satisfy the quantization property of Theorem \ref{th:quantization} and we finish the proof.

\

\section{Exclusion of boundary blow-up} \label{sec:boundary}
In this section we consider the Dirichlet problem in a bounded smooth domain $\Omega \subset \R^2$, see \eqref{eq2}, and we shall prove that the blow-up phenomenon can not occur on the boundary $\partial \Omega$, see Theorem \ref{th:boundary}. Let
\begin{equation}
\label{3.1}
M_k(x)=\max \{u_k(x),-2u_k(x)\},
\end{equation}
and $p_k\in\overline{\Omega}$ be such that $M_k(p_k)=\max_{\overline{\Omega}}M_k(x).$ We set then $\mu_k$ to be such that
$$-2\ln \mu_k=M_k(p_k).$$
We first note that $\mu_k\rightarrow 0.$ If not, by Green representation for \eqref{eq2} we have $|u_k|$ is uniformly bounded in $\o,$ which contradicts the assumption $|u_k|\to+\infty.$ On the other hand, by the boundary condition, we see that $p_k\notin \p\o$. Indeed, we can further show that $p_k$ must have some distance from the boundary.

\begin{lem}
\label{le3.1}
It holds that
$$\mathrm{dist}(p_k,\p\o)/\mu_k\rightarrow+\infty.$$
\end{lem}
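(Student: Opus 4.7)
The plan is to argue by contradiction, assuming along a subsequence that $d_k:=\mathrm{dist}(p_k,\p\o)/\mu_k$ stays bounded, say $d_k\to d_0\in[0,+\infty)$. I would rescale around the nearest boundary point $q_k\in\p\o$ to $p_k$, pass to a limit problem on a half-plane with degenerate Dirichlet datum, and derive an incompatibility. Extracting a subsequence, one may assume either $M_k(p_k)=u_k(p_k)$ (Case 1) or $M_k(p_k)=-2u_k(p_k)$ (Case 2); I focus on Case 1, the other being analogous with the roles of $e^{u}$ and $e^{-2u}$ exchanged.

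After rotating coordinates so the inward unit normal at $q_k$ is $e_2$, set
$$
v_k(y) := u_k(q_k+\mu_k y) + 2\log\mu_k, \qquad y \in \o_k := \mu_k^{-1}(\o-q_k),
$$
so $\o_k$ converges locally in $C^2$ to $H=\{y_2>0\}$ and the rescaled equation reads
$$
\D v_k + h_1^k(q_k+\mu_k y)\,e^{v_k} - \mu_k^{6}\,h_2^k(q_k+\mu_k y)\,e^{-2v_k} = 0 \quad \mathrm{in}~\o_k.
$$
The global maximality $M_k\leq M_k(p_k)=-2\log\mu_k$ yields the two-sided pointwise bound $3\log\mu_k\leq v_k\leq 0$, so $e^{v_k}\leq 1$ and $\mu_k^{6}e^{-2v_k}\leq 1$, whence $\D v_k$ is bounded in $L^{\infty}$ uniformly in $k$. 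At the rescaled point $\tilde p_k:=(p_k-q_k)/\mu_k$, which has $|\tilde p_k|=d_k$ bounded, one has $v_k(\tilde p_k)=0$, while on $\p\o_k$ one has $v_k=2\log\mu_k\to-\infty$. A Harnack estimate for the nonnegative function $-v_k$ gives local boundedness of $v_k$ around $\tilde p_k$, and elliptic regularity yields a subsequence with $v_k\to v_\infty$ in $C^2_{loc}(H)$, where $v_\infty$ solves $\D v_\infty+h_1(0)e^{v_\infty}=0$ in $H$ with $\int_H e^{v_\infty}<+\infty$ (by Fatou's lemma and \eqref{a3}), $v_\infty\leq 0$, $v_\infty(\tilde p_0)=0$, and $v_\infty\equiv -\infty$ on $\p H$.

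The crux is to rule out the existence of such a $v_\infty$, and this is where I expect the main obstacle. I would follow \cite{wei-ya} and run a moving-plane argument directly on $u_k$ in a local straightening of $\p\o$: the uniform $C^3$-bounds on $h_i^k$ from \eqref{a2} and the smoothness of $\p\o$ permit the plane to be moved inward by a depth $\delta>0$ independent of $k$, forcing strict monotonicity of $u_k$ in the inward normal direction throughout a boundary strip of width $\delta$. This prevents $M_k$ from attaining its maximum within that strip, so $\mathrm{dist}(p_k,\p\o)\geq\delta$ for large $k$; since $\mu_k\to 0$ this contradicts $d_k\mu_k\to 0$. Alternatively one may work intrinsically on $v_\infty$ via a Pohozaev identity on $H\cap B_R$ with $R\to\infty$, exploiting $e^{v_\infty}\equiv 0$ on $\p H$ together with the normalization $v_\infty(\tilde p_0)=0$ to extract the contradiction. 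In either route the difficulty is that the nonlinearity $-h_1e^{u}+h_2e^{-2u}$ is not monotone in $u$, so the moving-plane scheme requires a careful Lipschitz control of the difference quotient, precisely as in the sinh-Gordon analysis of \cite{wei-ya}.
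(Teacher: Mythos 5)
The paper's proof is much more elementary than your sketch, and your route has a genuine gap at exactly the point you flag as the ``main obstacle.'' The paper never passes to a half--plane limit: it assumes $\mathrm{dist}(p_k,\p\o)=O(\mu_k)$, writes $v_k(y)=u_k(p_k+\mu_k y)+2\ln\mu_k+\ln h_1^k(p_k)$ on the dilated domain $\o_k=(\o-p_k)/\mu_k$, and bounds $|\n v_k|$ on $B_R(0)\cap\o_k$ directly from the Green representation of $u_k$: splitting $\o$ into $B_{2R\mu_k}(p_k)$ (where the global maximality of $M_k(p_k)$ controls $\max\{e^{u_k},e^{-2u_k}\}\le\mu_k^{-2}$) and its complement (where $|p_k+\mu_ky-z|\geq R\mu_k$ and the total energy bound \eqref{a3} takes over), it obtains $|\n v_k|\leq C_R$ \emph{uniformly up to $\p\o_k$}. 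Since $v_k(0)-v_k(y_0)=-u_k(p_k)=2\ln\mu_k$ for any $y_0\in\p\o_k$, and such a $y_0$ lies in $B_R$ when $d_k=O(1)$, the gradient bound forces $-\ln\mu_k=O(1)$, a contradiction. No limit equation is needed.

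Your sketch assembles the right raw ingredients (the pointwise bound $3\log\mu_k\le v_k\le 0$, the bounded Laplacian, $v_k(\tilde p_k)=0$ versus $v_k=2\log\mu_k$ on $\p\o_k$), but then stops short precisely where the proof has to be finished. Three concrete problems with the half--plane route: (i) the interior Harnack estimate for $-v_k$ bounds $v_k$ only on balls $B_r(\tilde p_k)$ with $2r<d_k$, and the interior gradient estimate derived from it has a constant $O(1/d_k)$; passing up to $\p\o_k$ requires a boundary Schauder estimate whose constant is controlled by $\|v_k\|_{L^\infty}=O(|\log\mu_k|)$, so it does \emph{not} give an $O(1)$ gradient bound reaching the boundary --- this is exactly the information that the Green representation supplies and that you are missing; (ii) the stated boundary datum $v_\infty\equiv-\infty$ on $\p H$ does not follow from $C^2_{\mathrm{loc}}(H)$ convergence of $v_k$, which is purely interior; absent that boundary condition the limit problem $\D v_\infty+e^{v_\infty}=0$ in $H$, $v_\infty\le 0$, $v_\infty(\tilde p_0)=0$, $\int_H e^{v_\infty}<\infty$, admits genuine solutions (e.g.\ the one--dimensional $v_\infty(y)=-2\log\cosh((y_2-a)/\sqrt2)$), so there is nothing to contradict; and (iii) the moving--plane alternative you invoke for the Tzitz\'eica Dirichlet problem is not carried out and would not be routine: the right--hand side $h_1e^u-h_2e^{-2u}$ is nondecreasing in $u$ and the solution changes sign, so neither the maximum principle comparison nor the sign hypothesis of the standard Gidas--Ni--Nirenberg / Berestycki--Nirenberg framework is available without further work. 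In short, the argument as written does not close; the Green--function gradient estimate (or an equivalent boundary--respecting potential estimate) is the missing step, and once you have it the half--plane limit is superfluous.
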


\begin{proof}
We prove it by contradiction. Suppose we can find a sequence $(p_k,\mu_k)$ such that $\mathrm{dist}(p_k,\p\o)=O(\mu_k)$. Consider the dilated set
$$\o_k=(\o-p_k)/\mu_k.$$
Without loss of generality we may suppose that $\o_k\rightarrow(-\infty,t_0)\times\R$ and $u_k(p_k)=-2\ln\mu_k$. Let
\begin{align*}
&v_k(y)=u_k(p_k+\mu_k y)+2\ln\mu_k+\ln h_1^k(p_k), \\
&w_k(y)=-2u_k(p_k+\mu_k y)+2\ln\mu_k+\ln h_2^k(p_k).
\end{align*}
Let $R>0$ and $y\in B_R(0)\cap\o_k.$ Using the representation formula in \eqref{eq2} we obtain
\begin{align}
\label{3.2}
|\nabla v_k|&=|\mu_k\nabla u_k(p_k+\mu_ky)|\nonumber\\
&=\mu_k\left|\int_{\o}\nabla G(p_k+\mu_ky,z)\left(h_1^ke^{u_k}(z)-h_2^ke^{-2u_k}(z)\right)\mathrm{d}z\right|\\
&\leq C\mu_k\left(\int_{B_{2R\mu_k}(p_k)}+\int_{\Omega\setminus B_{2R\mu_k}(p_k)}\right)
\frac{\left|h_1^ke^{u_k}(z)-h_2^ke^{-2u_k}(z)\right|}{\left|p_k+\mu_ky-z\right|}\,\mathrm{d}z.\nonumber\\
\nonumber\end{align}
Now, in $B_{2R\mu_k}$ we simply have $\max\{e^{u_k},e^{-2u_k}\}\leq e^{u_k(p_k)}=\mu_k^{-2}$, while in $\o_k\setminus B_{2R}$ we have the estimate
$$|p_k+\mu_ky-z|\geq|z-p_k|-\mu_k|y|\geq R\mu_k.$$
Hence,
\begin{align*}
|\nabla{v}_k|\leq C\mu_k\int_{B_{2R\mu_k}(p_k)}\frac{|h_1^ke^{u_k}-h_2^ke^{-2u_k}|}{|p_k+\mu_ky-z|}
+C_R\int_{\Omega}\left|h_1e^{u_k}-h_2e^{-2u_k}\right|\leq C_R.
\end{align*}
It follows that $|\nabla v_k|\leq C_R$ in $B_R(0)\cap\o_k$. Therefore, we deduce that
$$|v_k(y)-v_k(0)|\leq C|y|\leq C \qquad \forall y\in\overline{B_R(0)\cap\o_k}.$$
But taking a point $y_0\in \p\o_k$ we get
$$|u_k(p_k)|=|v_k(y_0)-v_k(0)|\leq C.$$
By construction we conclude
$$-\ln\mu_k=O(1),$$
which contradicts to the fact $\mu_k\rightarrow 0$. The proof of the lemma is done.
\end{proof}

\medskip

Recalling the definitions of $v_k, w_k$ in Lemma \ref{le3.1} we observe $v_k+w_k\leq 4\ln\mu_k+C$. Moreover, it is easy to see that one of $v_k,w_k$ is locally uniformly bounded on compact subsets of $\R^2$: without loss of generality we can assume $v_k$ is locally uniformly bounded. It follows that $w_k$ tends to $-\infty$ uniformly on compact subset of $\R^2.$ Therefore,  $v_k$ converges to a function $v$ which satisfies the Liouville equation
$$
\Delta v+e^v=0 \qquad \mbox{in } \R^2
$$
We point out that by the quantization result of the latter equation we can deduce the following bound on the local energy:
\begin{equation} \label{int1}
\lim_{r\rightarrow0}\lim_{k\rightarrow\infty}\int_{B_r(p_k)}h_1^ke^{v_k}\geq 8\pi.
\end{equation}
Notice moreover that in case $w_k$ converges to a function we have instead
\begin{equation} \label{int2}
\lim_{r\rightarrow0}\lim_{k\rightarrow\infty}\int_{B_r(p_k)}h_2^ke^{w_k}\geq 4\pi.
\end{equation}
In order to localize the previous arguments we give now a definition similar to the selection process in Proposition \ref{pr2.1}, which plays an important role in the following arguments. This kind of approach can be found also in \cite{ro-wei,lwy,wei-ya}.

\medskip

\noindent {\bf Definition.}
We say that the property $\mathcal{H}_m$ holds if there exist points $\{p_{k,1},\cdots,p_{k,m}\}$ such that, letting
$$\mu_{k,j}=e^{-\frac12\max\{u_k(p_{k,j}),-2u_k(p_{k,j})\}}\rightarrow0, \qquad \mbox{as } k\to+\infty, \; \forall j=1,\dots,m,$$
we have
\begin{enumerate}
  \item $\lim_{k\rightarrow\infty}\frac{|p_{k,i}-p_{k,j}|}{\mu_{k,i}}=+\infty$ for any $i\neq j$,
  \item $\lim_{k\rightarrow\infty}\mathrm{dist}(p_{k,i},\p\o)/\mu_{k,i}=+\infty$ for all $i=1,\cdots,m,$
  \item for all $i=1,\cdots,m$, letting
  \begin{align*}
  &v_{k,i}(y)=u_k(p_{k,i}+\mu_{k,i}y)+2\ln\mu_{k,i}+\ln h_1^k(p_{k,i}),\\
  &w_{k,i}(y)=-2u_k(p_{k,i}+\mu_{k,i}y)+2\ln\mu_{k,i}+\ln h_2^k(p_{k,i}),
  \end{align*}
   then, in any compact subset of $\R^2$ either $v_{k,i}$ converges to a solution of $\Delta v+e^v=0$ while $w_{k,i}$ tends to $-\infty$ on compact subsets of $\R^2$ or $w_{k,i}$ converges to a solution of $\Delta w+2e^w=0$ while $v_{k,i}$ tends to $-\infty$ on compact subsets of $\R^2.$
\end{enumerate}

\medskip

By the above arguments we have that $\mathcal{H}_1$ holds. From \eqref{int1} and \eqref{int2} we observe that if $\mathcal{H}_m$ holds, then for every $i=1,2,\cdots,m,$
\begin{align}
\label{3.3}
\lim_{r\to 0}\lim_{k\rightarrow\infty}\int_{B_r(p_{k,i})}h_1^ke^{u_k}+\lim_{r\to 0}\lim_{k\rightarrow\infty}\int_{B_r(p_{k,i})}h_2^ke^{-2u_k}\geq 4\pi.
\end{align}

We start by showing the following fact.
\begin{lem} \label{l:iter}
Suppose $\mathcal{H}_l$ holds. Then we have the following alternative: either $\mathcal{H}_{l+1}$ holds or there exists $C>0$ such that
\begin{equation}
\label{3.4}
\inf_{i=1,\cdots,l}|x-p_{k,i}|^2e^{\max\{u_k(x),-2u_k(x)\}}\leq C.
\end{equation}
\end{lem}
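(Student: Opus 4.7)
The plan is to argue by contrapositive: assume $\mathcal{H}_l$ holds and estimate \eqref{3.4} fails, so along a subsequence there exist points $x_k\in\Omega$ with $\inf_{1\leq i\leq l}|x_k-p_{k,i}|^2 e^{M_k(x_k)}\to+\infty$. I would introduce a single ``separation'' scale that encodes both distance to previous bubbles and distance to the boundary,
$d_k(x):=\min\bigl\{\mathrm{dist}(x,\partial\Omega),\,\min_{1\leq i\leq l}|x-p_{k,i}|\bigr\},$
and the weight $f_k(x):=d_k(x)^2 e^{M_k(x)}$. Since $u_k\equiv 0$ on $\partial\Omega$ keeps $M_k$ bounded near $\partial\Omega$ (via the Green representation already used in Lemma \ref{le3.1}), $f_k$ extends continuously to $\overline\Omega$ and vanishes on $\partial\Omega\cup\{p_{k,1},\ldots,p_{k,l}\}$; thus $f_k$ attains its maximum at an interior point $p_{k,l+1}\notin\{p_{k,1},\ldots,p_{k,l}\}$ with $f_k(p_{k,l+1})\geq f_k(x_k)\to+\infty$. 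Setting $\mu_{k,l+1}:=\exp\bigl(-\tfrac12 M_k(p_{k,l+1})\bigr)$, the boundedness of $d_k$ on $\Omega$ forces $\mu_{k,l+1}\to 0$, and the identity $d_k(p_{k,l+1})/\mu_{k,l+1}=\sqrt{f_k(p_{k,l+1})}\to+\infty$ immediately yields properties (1) and (2) in the definition of $\mathcal{H}_{l+1}$.

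For the rescaled convergence in (3), I would fix $R>0$; for $y\in B_R(0)$ and $k$ large the point $p_{k,l+1}+\mu_{k,l+1}y$ lies in $\Omega$ and is far from the previous bubbles, while the triangle inequality gives $d_k(p_{k,l+1}+\mu_{k,l+1}y)\geq(1-o(1))d_k(p_{k,l+1}).$ The maximality of $f_k$ at $p_{k,l+1}$ then yields $M_k(p_{k,l+1}+\mu_{k,l+1}y)\leq M_k(p_{k,l+1})+O(1)$ uniformly in $y$, which rewrites as $\max\{v_{k,l+1}(y),w_{k,l+1}(y)\}\leq C$ on compact sets. Using the identity $v_{k,l+1}(y)+w_{k,l+1}(y)=-u_k(p_{k,l+1}+\mu_{k,l+1}y)+4\ln\mu_{k,l+1}+O(1)$ together with the fact that $M_k(p_{k,l+1})$ is attained by exactly one of $u_k(p_{k,l+1}),-2u_k(p_{k,l+1})$, one of $v_{k,l+1}(0),w_{k,l+1}(0)$ stays bounded while the other tends to $-\infty$. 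Standard elliptic regularity applied to the rescaled system
$\Delta v_{k,l+1}+\tilde h_1^k(y) e^{v_{k,l+1}}-\tilde h_2^k(y) e^{w_{k,l+1}}=0$
(with $\tilde h_i^k(y)=h_i^k(p_{k,l+1}+\mu_{k,l+1}y)$ uniformly $C^3$-bounded and bounded away from zero) then upgrades this pointwise information to $C^2_{loc}(\R^2)$ convergence of the bounded component to a solution of $\Delta v+e^v=0$ (respectively $\Delta w+2e^w=0$), while the other component diverges to $-\infty$ locally uniformly, completing the verification of $\mathcal{H}_{l+1}$.

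The main obstacle I anticipate is the bookkeeping in the selection of $p_{k,l+1}$: one must ensure simultaneously that the new bubble is genuinely far, on the scale $\mu_{k,l+1}$, from all previous $p_{k,i}$ and from $\partial\Omega$, so that after rescaling the ball $B_R(0)$ neither detects any other bubble nor feels the boundary. Encoding both requirements into a single weight $d_k$ is the cleanest way to arrange this and is morally a uniform version of Lemma \ref{le3.1}. Once this is in place, the finite-mass integrability of the Liouville limit, needed to invoke the lower bounds \eqref{int1}--\eqref{int2} for $\mathcal{H}_{l+1}$, follows from Fatou's lemma together with the uniform energy bound in \eqref{a3}.
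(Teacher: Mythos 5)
Your strategy is essentially the paper's: maximize a weighted version of $e^{M_k}$, rescale around the maximizer, and verify the defining properties of $\mathcal{H}_{l+1}$. The variation of folding $\mathrm{dist}(\cdot,\partial\Omega)$ into the weight $d_k$ rather than handling the boundary distance separately is cosmetic, and the rescaling bound $M_k(p_{k,l+1}+\mu_{k,l+1}y)\leq M_k(p_{k,l+1})+O(1)$ from maximality is correct. However there are two genuine gaps.

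First, and most importantly, you only verify \emph{half} of property (1) in the definition of $\mathcal{H}_{l+1}$. From $d_k(p_{k,l+1})/\mu_{k,l+1}\to+\infty$ you obtain $|p_{k,l+1}-p_{k,i}|/\mu_{k,l+1}\to+\infty$ for $i=1,\dots,l$, but (1) also requires $|p_{k,l+1}-p_{k,i}|/\mu_{k,i}\to+\infty$, i.e.\ separation measured in the scale of each \emph{old} bubble. This does not follow from the maximality of $f_k$ and needs a separate argument: in the paper this is the claim \eqref{3.6}, proved by contradiction. If $p_{k,l+1}-p_{k,j}=O(\mu_{k,j})$ for some $j\leq l$, one writes $p_{k,l+1}=p_{k,j}+\mu_{k,j}\theta_{k,j}$ with $\theta_{k,j}=O(1)$ and uses property (3) of $\mathcal{H}_l$ (convergence of the rescaled bubble at $p_{k,j}$) to show $|p_{k,l+1}-p_{k,j}|^2 e^{M_k(p_{k,l+1})}=|\theta_{k,j}|^2 e^{M_k(p_{k,j}+\mu_{k,j}\theta_{k,j})+2\ln\mu_{k,j}}$ stays bounded, which contradicts $\Gamma_k(p_{k,l+1})\to+\infty$. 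Without this step the new bubble could be trapped inside the core of a previous one.

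Second, the assertion $f_k(p_{k,l+1})\geq f_k(x_k)\to+\infty$ does not follow directly from the negation of \eqref{3.4}. The negation gives $\Gamma_k(x_k)=\inf_i|x_k-p_{k,i}|^2 e^{M_k(x_k)}\to+\infty$, but $f_k(x_k)=\min\{\mathrm{dist}(x_k,\partial\Omega)^2,\inf_i|x_k-p_{k,i}|^2\}\,e^{M_k(x_k)}$ could a priori stay bounded if the blowing-up sequence $\{x_k\}$ approaches $\partial\Omega$. You still need a Lemma~\ref{le3.1}-type Green-representation estimate to rule this out, which is precisely what the paper does \emph{after} maximizing $\Gamma_k$. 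Folding the boundary distance into the weight does not actually sidestep this step; it just moves it from the conclusion to the premise. The cleaner route (and the paper's) is to maximize $\Gamma_k$, get $\mathrm{dist}(x_k,\partial\Omega)/\gamma_k\to+\infty$ as a separate estimate, and then verify property (2).
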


\begin{proof}
Recall the definition of $M_k(x)$ in (\ref{3.1}). Let $\Gamma_k(x)=\inf_{i=1,\cdots,l}|x-p_{k,i}|^2e^{M_k(x)}$ we suppose that $\|\Gamma_k\|_{L^{\infty}(\o)}\rightarrow+\infty$ and we have to prove that $\mathcal{H}_{l+1}$ holds true. Observing that $M_k(x)\mid_{\p\o}=0$ we may write $\Gamma_k(x_k)=\max_{\O}\Gamma_k(x)$. Setting $\gamma_k=e^{-M_k(x_k)/2}$ we have $\gamma_k\rightarrow0$ and
$\Gamma_k(x_k)=\inf_{i=1,\cdots,l}|x_k-p_{k,i}|^2/\gamma_k^2\rightarrow+\infty.$ As a consequence we get
\begin{equation}
\label{3.5}
\frac{|x_k-p_{k,i}|}{\gamma_k}\rightarrow+\infty, \qquad \mathrm{for~all}~i=1,\cdots,l.
\end{equation}

We claim
\begin{equation}
\label{3.6}
\frac{|x_k-p_{k,i}|}{\mu_{k,i}}\rightarrow+\infty, \qquad \mathrm{for~all}~i=1,\cdots,l.
\end{equation}
If not, we can find some $j$ such that $x_k-p_{k,j}=O(\mu_{k,j}).$ We set $x_k=p_{k,j}+\mu_{k,j}\theta_{k,j}$ with $\theta_{k,j}=O(1)$. Then,
\begin{align*}
|x_k-p_{k,j}|^2e^{M_k(x_k)}=|\theta_{k,j}|^2e^{M_k(p_{k,j}+\mu_{k,j}\theta_{k,j})+2\ln\mu_{k,j}}\rightarrow C_{\theta_j}<+\infty
\end{align*}
and hence $\Gamma_k(x_k)=O(1)$ which is impossible. Therefore, the claim (\ref{3.6}) holds.

\medskip

Let $\epsilon\in(0,1)$ and consider the dilated set $\tilde{\o}_k=(\o-x_k)/\gamma_k$ Then, for any $y\in B_{R}(0)\cap\tilde{\o}_k$, we have
$$\Gamma_k(x_k+\gamma_ky)\leq \Gamma_k(x_k),$$
which implies
$$\inf_{i=1,\cdots,l}|x_k+\gamma_ky-p_{k,i}|e^{M_k(x_k+\gamma_ky)}\leq\inf_{i=1,\cdots,l}|x_k-p_{k,i}|e^{M_k(x_k)}.$$
Let
\begin{align*}
&v_{k1}(y)=u_k(x_k+\gamma_ky)+2\ln\gamma_k+\ln h_1^k(x_k), \\
&w_{k1}(y)=-2u_k(x_k+\gamma_ky)+2\ln\gamma_k+\ln h_2^k(x_k).
\end{align*}
Then we have
\begin{align*}
e^{\max\{v_{k1}(y),w_{k1}(y)\}}\leq C_1\frac{\inf_{i=1,\cdots,l}|x_k-p_{k,i}|^2}{\inf_{i=1,\cdots,l}|x_k+\gamma_ky-p_{k,i}|^2}\,,
\end{align*}
where $C_1>0$ depends just on $h^k_1,h^k_2$. By (\ref{3.5}) we are able to choose $k(R)\leq k$ such that $|x-p_{k,i}|/\gamma_k\geq\frac{R}{\epsilon}$ for all $i=1,\cdots,l$. By the triangle inequality it is easy to see that for all $i$ we have
$$|x_k+\gamma_ky-p_{k,i}|\geq(1-\epsilon)|x_k-p_{k,i}|,$$
which gives
\begin{align*}
\max\{v_{k1}(y),w_{k1}(y)\}\leq\ln\frac{C_1}{(1-\epsilon)^2}, \qquad \forall y\in B_R(0)\cap\o_k,~k\geq k(R)\,,
\end{align*}
and
\begin{align*}
e^{M_k(x_k+\gamma_ky)}\leq\frac{C_1}{(1-\epsilon)^2C_2}\gamma_k^{-2}\,, \qquad \forall y\in B_R(0)\cap\o_k,~k\geq k(R),
\end{align*}
where $C_2>0$ is taken so that $C_2\leq\min_{x\in\ov\O}\{h_1^k(x),h_2^k(x)\}$. By the same argument of Lemma \ref{le3.1} one can show
$$\frac{\mathrm{dist}(x_k,\p\o)}{\gamma_k}\rightarrow+\infty.$$
Based on the above facts it is sufficient to take $p_{k,l+1}=x_k$ and $\mu_{k,l+1}=\gamma_k$ to get the validity of the property $\mathcal{H}_{l+1}$, see its definition above.
\end{proof}

\medskip

Furthermore, it is possible to show that the above process finishes after a finite number of steps.
\begin{lem}
\label{le3.2}
There exists $m$ such that $\mathcal{H}_m$ holds and there exists $C>0$ such that
\begin{align*}
\inf_{i=1,\cdots,m}|x-p_{k,i}|^2e^{\max\{u_k(x),-2u_k(x)\}}\leq C, \qquad \forall x\in\o.
\end{align*}
\end{lem}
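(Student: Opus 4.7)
The plan is to close the iteration of Lemma~\ref{l:iter} by a mass-accounting argument: each new center $p_{k,i}$ must carry, in the limit, at least $4\pi$ units of the combined energy $\int(h_1^ke^{u_k}+h_2^ke^{-2u_k})$, while the total is uniformly bounded by the analog of \eqref{a3} for the Dirichlet problem \eqref{eq2}. Hence the iteration can run for only finitely many steps, and at the terminal step Lemma~\ref{l:iter} itself delivers the pointwise estimate claimed in Lemma~\ref{le3.2}.

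First I would set the induction going: property $\mathcal{H}_1$ is already available from the construction preceding Lemma~\ref{l:iter}, with Lemma~\ref{le3.1} guaranteeing the required interior separation. Assume $\mathcal{H}_l$ holds and the bound \eqref{3.4} fails; then Lemma~\ref{l:iter} produces a new center $p_{k,l+1}$ and property $\mathcal{H}_{l+1}$. To bound $l$ I would use the following quantitative form of \eqref{3.3}. If $\mathcal{H}_m$ holds, then for each $i=1,\dots,m$, either $v_{k,i}$ converges in $C^2_{loc}(\R^2)$ to a solution of $\Delta v+e^v=0$ with $\int_{\R^2}e^v=8\pi$ (while $w_{k,i}\to-\infty$ on compact sets), or $w_{k,i}$ converges to a solution of $\Delta w+2e^w=0$ with $\int_{\R^2}e^w=4\pi$ (while $v_{k,i}\to-\infty$). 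Undoing the rescaling via $x=p_{k,i}+\mu_{k,i}y$ and using the $C^3$ bounds on $h_1^k,h_2^k$ from~\eqref{a2} to pass to the limit, I obtain
\begin{equation*}
\lim_{R\to\infty}\lim_{k\to\infty}\int_{B_{R\mu_{k,i}}(p_{k,i})}\bigl(h_1^ke^{u_k}+h_2^ke^{-2u_k}\bigr)\,dx \;\geq\; 4\pi,\qquad\forall\,i=1,\dots,m.
\end{equation*}

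Second, I would exploit the separation condition~(1) in the definition of $\mathcal{H}_m$, which gives $|p_{k,i}-p_{k,j}|/\mu_{k,i}\to\infty$ for every pair $i\neq j$; applying the same condition with $i,j$ interchanged yields the analogous bound with $\mu_{k,j}$ in the denominator. Therefore, for any fixed $R>0$ the disks $B_{R\mu_{k,i}}(p_{k,i})$ are pairwise disjoint once $k$ is large. Summing the previous local lower bound over $i$, using disjointness, and invoking the global energy bound,
\begin{equation*}
4\pi\, m \;\leq\; \lim_{R\to\infty}\lim_{k\to\infty}\sum_{i=1}^{m}\int_{B_{R\mu_{k,i}}(p_{k,i})}\bigl(h_1^ke^{u_k}+h_2^ke^{-2u_k}\bigr)\,dx \;\leq\; 2C,
\end{equation*}
so $m\leq C/(2\pi)$. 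The iteration must terminate before this threshold, and at that terminal index Lemma~\ref{l:iter} forces the pointwise estimate \eqref{3.4}, which is exactly the content of Lemma~\ref{le3.2}.

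The main obstacle I anticipate is purely bookkeeping around the disjointness argument: the centers $p_{k,i}$ may cluster to a common point as $k\to\infty$, so one cannot use fixed-radius disks; the correct scale is precisely $\mu_{k,i}$, and disjointness then rests on the symmetric use of the separation condition~(1). A secondary technical point is that the rescaled masses really tend to $8\pi$ or $4\pi$ (and not just lie above these quantities), which follows from the Chen--Li classification of finite-mass entire solutions to $\Delta v+e^v=0$, together with the uniform $C^3$ bound on $h_i^k$ that allows to replace $h_i^k(p_{k,i}+\mu_{k,i}y)$ by $h_i^k(p_{k,i})$ up to $o_k(1)$ on every compact set.
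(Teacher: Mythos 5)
Your proposal is correct and follows essentially the same route as the paper: start from $\mathcal{H}_1$, iterate Lemma~\ref{l:iter}, observe via \eqref{3.3} (equivalently \eqref{3.7}) that each center contributes at least $4\pi$ of combined energy on disjoint disks $B_{R\mu_{k,i}}(p_{k,i})$, and conclude from the uniform energy bound in \eqref{a3} that the iteration terminates, at which point the alternative in Lemma~\ref{l:iter} yields \eqref{3.4}. The only cosmetic difference is that you invoke the Chen--Li classification to pin the rescaled masses to exactly $8\pi$ or $4\pi$, whereas the paper only needs (and only records) the lower bound; both are fine.
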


\begin{proof}
Suppose by contradiction the statement of the lemma is false. Since we know $\mathcal{H}_1$ holds, then by Lemma \ref{l:iter} $\mathcal{H}_l$ holds for all $l\geq1$. But for fixed $R>0$ we have
$$B_{R\mu_{k,i}}(p_{k,i})\cap B_{R\mu_{k,j}}(p_{k,j})=\emptyset$$
for $i\neq j$ and $k$ sufficiently large. It follows that
\begin{align*}
\int_{\o}h_1^ke^{u_k}+\int_{\o}h_2^ke^{-2u_k}\geq\sum_{i=1}^l\int_{B_{R\mu_{k,i}}(p_{k,i})}\left(h_1^ke^{u_k}+h_2^ke^{-2u_k}\right)\geq 4\pi l+o(1),
\end{align*}
for any $l\in\N$, where we used \eqref{3.3}, namely
\begin{equation}
\label{3.7}
\int_{B_{R\mu_{k,i}}(p_{k,i})}\left(h_1^ke^{u_k}+h_2^ke^{-2u_k}\right)\geq 4\pi+o(1).
\end{equation}
This fact is in contradiction with the energy bound (\ref{a3}). Therefore, the lemma holds.
\end{proof}

\medskip

A byproduct of the above selection process is the following estimate.
\begin{lem}
\label{le3.3}
Let $p_{k,i}$, $i=1,\dots, m$ be points as in Lemma \ref{le3.2}. Then, there exists $C>0$ such that
$$\inf_{i=1,\cdots,m}|x-p_{k,i}||\nabla u_k(x)|\leq C, \qquad \forall x\in\o.$$
\end{lem}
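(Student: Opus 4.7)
\begin{pf}[Proof sketch]
Fix $x\in\Omega$ and set $r_k(x)=\inf_{i=1,\dots,m}|x-p_{k,i}|$. Our goal is to show that $r_k(x)|\nabla u_k(x)|\leq C$, uniformly in $x$ and $k$.

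\smallskip

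The strategy is to combine Lemma \ref{le3.2} with the Green representation formula for the Dirichlet problem \eqref{eq2}. First, from Lemma \ref{le3.2} we have $d_k(y)^2 e^{M_k(y)}\leq C$, where $d_k(y)=\inf_i|y-p_{k,i}|$, and so $e^{M_k(y)}\leq C/d_k(y)^2$. For any $y\in B_{r_k(x)/2}(x)$, the triangle inequality gives $d_k(y)\geq r_k(x)/2$, and therefore
\begin{equation*}
h_1^k(y)e^{u_k(y)}+h_2^k(y)e^{-2u_k(y)}\leq 2C\,e^{M_k(y)}\leq \frac{C}{r_k(x)^2}, \qquad y\in B_{r_k(x)/2}(x).
\end{equation*}

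\smallskip

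Next, since $u_k=0$ on $\p\Omega$, the Green representation for \eqref{eq2} gives
\begin{equation*}
u_k(x)=\int_\Omega G(x,z)\bigr(h_1^k(z)e^{u_k(z)}-h_2^k(z)e^{-2u_k(z)}\bigr)\,dz,
\end{equation*}
where $G$ is the Dirichlet Green function on $\Omega$, which satisfies $|\nabla_x G(x,z)|\leq C|x-z|^{-1}$. Differentiating and taking absolute values,
\begin{equation*}
|\nabla u_k(x)|\leq C\int_\Omega\frac{h_1^k(z)e^{u_k(z)}+h_2^k(z)e^{-2u_k(z)}}{|x-z|}\,dz.
\end{equation*}
I split the integral into the near region $A_1=\Omega\cap B_{r_k(x)/2}(x)$ and the far region $A_2=\Omega\setminus A_1$. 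On $A_2$ we have $|x-z|\geq r_k(x)/2$, so by the energy bound in \eqref{a3},
\begin{equation*}
\int_{A_2}\frac{h_1^k e^{u_k}+h_2^k e^{-2u_k}}{|x-z|}\,dz\leq \frac{2}{r_k(x)}\int_\Omega\bigr(h_1^k e^{u_k}+h_2^k e^{-2u_k}\bigr)\leq \frac{C}{r_k(x)}.
\end{equation*}
On $A_1$, using the pointwise bound derived above together with the elementary identity $\int_{B_r(x)}|x-z|^{-1}\,dz=2\pi r$, we obtain
\begin{equation*}
\int_{A_1}\frac{h_1^k e^{u_k}+h_2^k e^{-2u_k}}{|x-z|}\,dz\leq \frac{C}{r_k(x)^2}\int_{B_{r_k(x)/2}(x)}\frac{dz}{|x-z|}=\frac{C}{r_k(x)}.
\end{equation*}

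\smallskip

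Combining the two contributions yields $|\nabla u_k(x)|\leq C/r_k(x)$, i.e.\ $r_k(x)|\nabla u_k(x)|\leq C$ uniformly in $x\in\Omega$ and $k$, which is precisely the desired estimate. No step presents a real obstacle: the work is packaged in Lemma \ref{le3.2}, which converts the quantitative bubble picture into the pointwise bound $e^{M_k}\leq C/d_k^2$; once this is in hand, the split-integral argument through the Green representation is standard and the two regions naturally yield the same bound $C/r_k(x)$.
\end{pf}
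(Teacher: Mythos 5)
Your argument is correct, and it is essentially the same in spirit as the paper's (Green representation for the Dirichlet problem, the gradient bound $|\nabla_x G(x,z)|\leq C|x-z|^{-1}$, the pointwise bound $e^{M_k}\leq C/d_k^2$ from Lemma \ref{le3.2}, and the energy bound from \eqref{a3}). The difference lies in the decomposition: you split $\Omega$ once, into $B_{r_k(x)/2}(x)$ and its complement, using the pointwise bound on the near region and the energy bound on the far region. The paper instead first partitions $\Omega$ into the Voronoi-type cells $\Omega_{k,i}=\{z:|z-p_{k,i}|=R_k(z)\}$ and then splits each cell by distance from the bubble center $p_{k,i}$ rather than from $x$. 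Your decomposition is arguably cleaner: the far-region estimate via the energy bound is transparent, and the near-region estimate is a one-line computation with $\int_{B_r(x)}|x-z|^{-1}\,dz=2\pi r$. In the paper's version, the estimate \eqref{3.8} for $\Omega_{k,i}\setminus B_{|x-p_{k,i}|/2}(p_{k,i})$ as written only records the pointwise inequality and says ``hence we can conclude''; to actually land on $C/|x-p_{k,i}|$ one still has to separate a neighborhood of $x$ from the rest and invoke the energy bound on the far part — exactly the step your proof makes explicit. So your route avoids the need for a second, implicit split inside each cell. One cosmetic remark: once you have $|\nabla u_k(x)|\leq C/r_k(x)$ you should note that $r_k(x)=\inf_i|x-p_{k,i}|$ is precisely the quantity appearing in the statement, so the conclusion follows immediately; this is fine, just worth stating.
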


\begin{proof}
Using the representation formula for equation (\ref{eq2}) we get
\begin{align*}
|\nabla u_k|\leq C\int_\o\frac{1}{|x-z|}\left(h_1^ke^{u_k}(z)-h_2^ke^{-2u_k}(z)\right)\,\mathrm{d}z,
\end{align*}
where we used $|\nabla G(x,y)|\leq\frac{C}{|x-y|}.$ We decompose $\o$ into $\o=\cup_{i=1}^m\o_{k,i}$, where
$$\o_{k,i}=\bigr\{x\in\o:|x-p_{k,i}|=R_k(x)\bigr\},~i=1,\cdots,m, \qquad R_k(x):=\inf_{i=1,\cdots,m}|x-p_{k,i}|.$$
For $z\in\o_{k,i}\setminus B_{\frac{|x-p_{k,i}|}{2}}(p_{k,i})$ we have the estimate
\begin{align*}
|x-z|^{-1}e^{u_k(z)}\leq\frac{C}{|x-z||z-p_{k,i}|^2}\leq\frac{C}{|x-z||x-p_{k,i}|^2}\,.
\end{align*}
Hence we can conclude that
\begin{align}
\label{3.8}
\int_{\o_{k,i}\setminus B_{\frac{|x-p_{k,i}|}{2}}(p_{k,i})}\frac{h_1^ke^{u_k}(z)}{|x-z|}\,\mathrm{d}z\leq\frac{C}{|x-p_{k,i}|}\,.
\end{align}
While for $z\in\o_{k,i}\cap B_{\frac{|x-p_{k,i}|}{2}}(p_{k,i})$, we use $|x-z|\geq\frac12|x-p_{k,i}|$ to deduce
\begin{align}
\label{3.9}
\int_{\o_{k,i}\cap B_{\frac{|x-p_{k,i}|}{2}}(p_{k,i})}\frac{h_1^ke^{u_k}(z)}{|x-z|}\,\mathrm{d}z\leq\frac{C}{|x-p_{k,i}|}\,.
\end{align}
Using jointly (\ref{3.8}) and (\ref{3.9}) we conclude that
\begin{align}
\label{3.10}
\int_{\o_{k,i}}\frac{h_1^ke^{u_k}(z)}{|x-z|}\,\mathrm{d}z\leq \frac{C}{|x-p_{k,i}|}
\end{align}
and analog estimate holds for $-2u_k$. By the latter properties we readily get
$$\inf_{i=1,\cdots,m}|x-p_{k,i}||\nabla u_k(x)|\leq C.$$
Hence, we have the thesis of the lemma.
\end{proof}

\medskip

Let $p_i=\lim_{k\rightarrow\infty}p_{k,i}\in\O$ for $i=1,\cdots,m$ and let $\mathcal{S}=\{p_1,\cdots,p_m\}$ be the blow-up set. The latter result yields a uniform bound of the bubbling solution outside the blow-up set.
\begin{lem}
\label{le3.4}
$u_k$ is uniformly bounded in any compact subset of $\ov\O\setminus\mathcal{S}.$
\end{lem}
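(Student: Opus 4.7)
The strategy is to propagate the Dirichlet boundary value $u_k|_{\partial\Omega}=0$ inward using the gradient estimate of Lemma~\ref{le3.3}. Since the bound $|\nabla u_k(x)|\leq C/\min_i|x-p_{k,i}|$ becomes a uniform $L^\infty$ bound on $|\nabla u_k|$ on any set that stays away from $\mathcal{S}$, a simple line integration starting from $\partial\Omega$ will yield the desired pointwise bound on $u_k$.

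Given a compact set $K\subset\overline{\Omega}\setminus\mathcal{S}$, I would choose $\delta>0$ small enough that $\mathrm{dist}(K,\mathcal{S})\geq 2\delta$ and so that the balls $\{B_\delta(p_i)\}_{i=1}^m$ are pairwise disjoint and do not cover $\partial\Omega$. For $k$ large enough, each $p_{k,i}$ lies in $B_{\delta/2}(p_i)$, so on the set $U_\delta:=\overline{\Omega}\setminus\bigcup_{i=1}^m B_\delta(p_i)$ one has $\min_i|x-p_{k,i}|\geq\delta/2$, and Lemma~\ref{le3.3} gives $|\nabla u_k|\leq 2C/\delta$ on $U_\delta$, uniformly in $k$. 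Since $\Omega$ is a bounded planar domain with smooth boundary and $\mathcal{S}$ is a finite set of interior/boundary points, a standard geometric argument shows that $U_\delta$ is path-connected and that there exists $L=L(\Omega,\mathcal{S},\delta)>0$ with the property that every $x\in K$ can be joined to some $y(x)\in\partial\Omega\cap U_\delta$ by a smooth curve $\gamma_x\subset U_\delta$ of length at most $L$.

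Integrating the gradient bound along such a curve and using the Dirichlet condition $u_k(y(x))=0$,
$$
|u_k(x)|=|u_k(x)-u_k(y(x))|\leq\int_{\gamma_x}|\nabla u_k|\,ds\leq\frac{2CL}{\delta},
$$
uniformly in $k$ and in $x\in K$, which gives the claim. I do not anticipate any serious obstacle: the lemma is essentially a direct corollary of Lemma~\ref{le3.3}. The only minor point is the planar geometric construction of the connecting curves $\gamma_x$, which is straightforward in dimension two since one can always go around the finitely many excised disks to reach the boundary.
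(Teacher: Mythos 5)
Your proposal is correct and is essentially the same argument as the paper's: use Lemma~\ref{le3.3} to obtain a uniform gradient bound on the region $\overline\Omega\setminus\bigcup_i B_\delta(p_i)$, then integrate along a path in this connected set to a boundary point where $u_k=0$. The paper is slightly terser (it fixes a single base point $\bar x\in\partial\Omega$ rather than a family $y(x)$, and leaves the path-length bound implicit in the choice of $\e$ making $\omega_\e$ connected), but the two arguments coincide.
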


\begin{proof}
We choose $\e>0$ small enough such that the set $\o_{\e}=\O\setminus\bigcup_{i=1}^m B_{\e}(p_i)$ is connected. On the other hand, we note if $k$ is sufficiently large we have
$$\inf_{i=1,\cdots,m}|x-p_{k,i}|\geq\frac{\e}{2}, \qquad \forall x\in\o_{\e}.$$
Using Lemma \ref{le3.3} we have
$$|\nabla u_k|\leq C_\e \qquad \mathrm{in}~\o_{\e}.$$
By choosing some $\bar x\in\o_{\e}\cap\p\o$ we get that
$$|u_k(x)|=|u_k(x)-u_k(\bar x)|\leq C_\e, \qquad \forall x\in\o_{\e}.$$
Thus, we obtain the conclusion.
\end{proof}

\medskip

Furthermore, we collect in the following some standard information concerning the blow-up phenomenon.
\begin{lem}
\label{le3.5}
Let $u_k$ be a sequence of solutions to (\ref{eq2}). Then, it holds
\begin{align}
\label{3.12}
h_1^ke^{v_k}dx \rightharpoonup r_1(x)\,dx+\sum_{p\in\mathcal{S}\cap\o}m_1(p)\delta_{p} \qquad \mathrm{in}~\o,\\
\label{3.13}
h_2^ke^{w_k}dx \rightharpoonup r_2(x)\,dx+\sum_{p\in\mathcal{S}\cap\o}m_2(p)\delta_{p} \qquad \mathrm{in}~\o,
\end{align}
where $r_i(x)\in L^1(\O)\cap C_{\mathrm{loc}}^{\infty}(\O\setminus\mathcal{S})$, $i=1,2$, $m_1(p)\in 8\pi\N$ and $m_2(p)\in 4\pi\N$. Moreover, $u_k$ converges to $\mathcal{G}+\mathcal{U}$ in $C_{\mathrm{loc}}^\infty(\O\setminus\mathcal{S})$ and in $W_0^{1,q}(\o)$ for any $q<2$, where $\mathcal{G}$ and $\mathcal{U}$ satisfy
\begin{align*}
&\Delta \mathcal{G}(x)+\sum_{p\in\mathcal{S}\cap\o}(m_1(p)-m_2(p))\delta_p=0 \quad \mathrm{in}~\o, &\mathcal{G}(x)=0~\mathrm{on}~\p\o,\\
&\Delta \mathcal{U}(x)+r_1(x)-r_2(x)=0 \quad \mathrm{in}~\o, &\mathcal{U}(x)=0~\mathrm{on}~\p\o.
\end{align*}
\end{lem}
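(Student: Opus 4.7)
The plan is to extract weak-$*$ limits of the two nonlinear measures on $\Omega$, identify their singular parts via the interior quantization result of Theorem \ref{th:quantization}, and then recover the decomposition of $u_k$ through the Green's representation formula.

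First I would use the energy bound \eqref{a3} to say that the densities $h_1^k e^{u_k}$ and $h_2^k e^{-2u_k}$ are uniformly bounded in $L^1(\Omega)$, so up to a subsequence they converge in the weak-$*$ sense of Radon measures on $\Omega$ to finite positive measures $\mu_1,\mu_2$. By Lemma \ref{le3.4} the sequence $u_k$ is uniformly bounded on every compact subset of $\overline\Omega\setminus\mathcal S$, and hence so are the densities $h_i^k e^{\pm u_k}$. Combined with the $C^3$ control of $h_i^k$ in \eqref{a2} and standard interior elliptic estimates applied to \eqref{eq2}, a diagonal extraction gives $C^\infty_{\mathrm{loc}}(\Omega\setminus\mathcal S)$ convergence to smooth functions $r_i$, while Fatou's lemma forces $r_i\in L^1(\Omega)$. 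Seen as measures on the open set $\Omega$, the singular parts of $\mu_i$ are therefore supported on the finite set $\mathcal S\cap\Omega$, yielding the decomposition \eqref{3.12}--\eqref{3.13} with atoms $m_i(p)$ at each $p\in\mathcal S\cap\Omega$.

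Next I would prove the quantization $m_1(p)\in 8\pi\mathbb{N}$ and $m_2(p)\in 4\pi\mathbb{N}$ at each interior blow-up point $p$. Pick $r_0>0$ so small that $B_{r_0}(p)\Subset\Omega$ and $\mathcal S\cap\overline{B_{r_0}(p)}=\{p\}$. After a translation and rescaling sending $B_{r_0}(p)$ to $B_1$, the rescaled functions satisfy \eqref{seq} with appropriate coefficients. Hypothesis \eqref{a1} is immediate since $p$ is the only blow-up point in $B_{r_0}(p)$ and Lemma \ref{le3.4} gives uniform bounds away from it; \eqref{a2} follows from the smoothness and uniform positivity of the $h_i^k$ after normalising by their values at $p$; the oscillation part of \eqref{a3} is a consequence of the Harnack-type estimate of Proposition \ref{pr2.2} applied on $\partial B_{r_0/2}(p)$ together with Lemma \ref{le3.4}, and the energy part of \eqref{a3} is inherited from the global bound in \eqref{a3}. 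Theorem \ref{th:quantization} then yields $\tfrac{1}{2\pi}m_1(p)\in 4\mathbb{N}$ and $\tfrac{1}{2\pi}m_2(p)\in 2\mathbb{N}$, which is precisely the asserted quantization.

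Finally I would pass to the limit in the Green's representation
$$
u_k(x)=\int_\Omega G(x,y)\bigl(h_1^k(y)\,e^{u_k(y)}-h_2^k(y)\,e^{-2u_k(y)}\bigr)\,dy,
$$
with $G$ the Dirichlet Green's function of $-\Delta$ on $\Omega$. Substituting the decomposition \eqref{3.12}--\eqref{3.13} on the right and defining
$$
\mathcal U(x):=\int_\Omega G(x,y)\bigl(r_1(y)-r_2(y)\bigr)\,dy,\qquad \mathcal G(x):=\sum_{p\in\mathcal S\cap\Omega}\bigl(m_1(p)-m_2(p)\bigr)G(x,p),
$$
one obtains the two PDEs in the statement together with the zero Dirichlet data. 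The $W^{1,q}_0(\Omega)$-convergence for $q<2$ is the standard Brezis--Strauss/Stampacchia estimate for Dirichlet problems with a uniformly bounded $L^1$ right-hand side, while $C^\infty_{\mathrm{loc}}(\Omega\setminus\mathcal S)$-convergence follows by bootstrapping from Lemma \ref{le3.4} and the smoothness of the coefficients. The main obstacle is the second step, namely verifying the hypotheses of Theorem \ref{th:quantization} at each interior blow-up point --- most crucially the oscillation assumption in \eqref{a3}, which really relies on the Harnack-type Proposition \ref{pr2.2}; the first and third steps are by now standard ingredients for Liouville-type problems.
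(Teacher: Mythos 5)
Your proof is correct and follows essentially the same path as the paper's terse proof: extract the decomposition \eqref{3.12}--\eqref{3.13} from the $L^1$ bounds, local boundedness (Lemma \ref{le3.4}) and interior elliptic estimates (the paper cites Ricciardi--Zecca for this step), obtain the quantization of the atoms from Theorem \ref{th:quantization}, and conclude by the Green's representation and standard elliptic regularity. A minor simplification: since Lemma \ref{le3.4} already gives $|u_k|\le C$ on the compact set $\partial B_{r_0}(p)\subset\overline\Omega\setminus\mathcal S$, the oscillation hypothesis of \eqref{a3} follows immediately, so the additional appeal to Proposition \ref{pr2.2} in your verification of the hypotheses of Theorem \ref{th:quantization} is not needed.
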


\begin{proof}
By minor modifications of the arguments in \cite[Theorem 3.4]{rz} we can get the convergence in (\ref{3.12})-(\ref{3.13}). Using the quantization result of Theorem \ref{th:quantization}, we know $m_1(p)\in 8\pi\N$ and $m_2(p)\in 4\pi\N$ when $p\in\o.$ We can complete the proof of the lemma by standard elliptic regularity theory.
\end{proof}

\medskip

We are now in a position to prove the main result, i.e. the exclusion of boundary blow-up.

\medskip

\noindent{\em Proof of Theorem \ref{th:boundary}.}
We have to prove that $S\cap\p\o=\emptyset$. Suppose it is not the case and take $x_0\in\mathcal{S}\cap\p\o.$ Taking $r$ small enough we may assume $\mathcal{S}\cap B_{r}(x_0)=\{x_0\}$. Consider then $z_k=x_0+\Theta_{k,r}\nu(x_0)$ with
\begin{equation}
\label{3.14}
\Theta_{k,r}=\frac{\int_{\p\o\cap B_r(x_0)}\le x-x_0,\nu\r\left|\frac{\p u_k}{\p\nu}\right|^2}{\int_{\p\o\cap B_r(x_0)}\le \nu(x_0),\nu\r\left|\frac{\p u_k}{\p\nu}\right|^2}\,,
\end{equation}
where $r$ is taken such that $\frac12\leq\le\nu(x_0),\nu\r\leq1$ for $x\in\p\o\cap B_r(x_0)$. $\nu(x)$ denotes the unit outer normal at $x\in\p\o$. Observe that $|\Theta_{k,r}|\leq 2r$ for $|\le x-x_0,\nu\r|\leq r$. Writing
$$x-z_k=x-x_0-\Theta_{k,r}\nu(x_0),$$
we deduce
\begin{align}
\label{3.15}
\int_{\p\o\cap B_r(x_0)}\le x-z_k,\nu\r\left|\frac{\p u_k}{\p \nu}\right|^2=0.
\end{align}
Using the Pohozaev identity (\ref{2.2}) in $\o\cap B_r(x_0)$ with $x_k$ replaced by $z_k,$ we get
\begin{align}
\label{3.16}
&\int_{\o\cap B_r(x_0)}\left(2h_1^ke^{u_k}+h_2^ke^{-2u_k}\right)
+\int_{\o\cap B_r(x_0)}\left(e^{u_k}\le x-z_k,\nabla h_1^k\r + \frac12e^{-2u_k}\le x-z_k,\nabla h_2^k\r\right)\nonumber\\
&=\int_{\p(\o\cap B_r(x_0))}\left(h_1^ke^{u_k}+\frac12h_2^ke^{-2u_k}\right)\le x-z_k,\nu\r+\int_{\p(\o\cap B_r(x_0))}\frac{\p u_k}{\p\nu}\le x-z_k,\nabla u_k\r\\
&\quad\quad-\frac12\int_{\p(\o\cap B_r(x_0))}|\nabla u_k|^2\le x-z_k,\nu\r.\nonumber
\end{align}
By the Dirichlet boundary conditions one see that
\begin{align*}
\lim_{k\rightarrow+\infty}\int_{\p\o\cap B_r(x_0)}\left(h_1^ke^{u_k}+h_2^ke^{-2u_k}\right)\le x-z_k,\nu\r=O(r^2),
\end{align*}
Moreover, by (\ref{3.15}) we get
\begin{align*}
&\int_{\p\o\cap B_r(x_0)}\frac{\p u_k}{\p\nu}\le x-z_k,\nabla u_k\r-\frac12\int_{\p\o\cap B_r(x_0)}|\nabla u_k|^2\le x-z_k,\nu\r\\
=~&\frac12\int_{\p\o\cap B_r(x_0)}\le x-z_k,\nu\r|\nabla u_k|^2=0.
\end{align*}
From the total energy bound and the assumptions on $h_i^k$, see \eqref{a2}, \eqref{a3}, we readily have
\begin{align*}
\lim_{k\rightarrow+\infty}\int_{\o\cap B_r(x_0)}e^{u_k}\le x-z_k,\nabla h_1^k\r=O(r)
\end{align*}
and
\begin{align*}
\lim_{k\rightarrow+\infty}\int_{\o\cap B_r(x_0)}e^{-2u_k}\le x-z_k,\nabla h_2^k\r=O(r).
\end{align*}
The left terms in \eqref{3.16} can be estimated as follows.
\medskip

\no\textbf{Claim:}
\begin{align*}
\lim_{k\rightarrow+\infty}\int_{\o\cap\p B_r(x_0)}\left(h_1^ke^{u_k}+\frac12h_2^ke^{-2u_k}\right)\le x-z_k,\nu\r=O(\e(r)),
\end{align*}
and
\begin{align*}
\int_{\o\cap\p B_r(x_0)}\frac{\p u_k}{\p\nu}\le x-z_k,\nabla u_k\r-\frac12\int_{\o\cap\p B_r(x_0)}|\nabla u_k|^2\le x-z_k,\nu\r=O(\e(r)),
\end{align*}
where $\e(r)\rightarrow0$ as $r\rightarrow0$. We postpone the proof of this estimates in the next lemma.

\medskip

Returning to the Pohozaev identity in \eqref{3.16}, by all the previous estimates we conclude
\begin{align*}
\lim_{r\rightarrow0}\lim_{k\rightarrow\infty}\int_{\o\cap B_r(x_0)}\left(2h_1^ke^{u_{k}}+h_2^ke^{-2u_k}\right)=0,
\end{align*}
which contradicts the minimal energy stated in (\ref{3.3}). The proof is concluded once we get the claim.
\begin{flushright}
$\square$
\end{flushright}

\begin{lem}
\label{le3.7}
For any $\e$ there exists $r=r(\e)$ such that
\begin{equation}
\label{3.17}
\lim_{k\rightarrow\infty}\int_{\o\cap\p B_r(x_0)}\left(h_1^ke^{u_k}+\frac12h_2^ke^{-2u_k}\right)|\le x-z_k,\nu\r|=O(\e),
\end{equation}
and
\begin{align}
\label{3.18}
\int_{\o\cap\p B_r(x_0)}\left|\frac{\p u_k}{\p\nu}\le x-z_k,\nabla u_k\r-\frac12|\nabla u_k|^2\le x-z_k,\nu\r\right|=O(\e).
\end{align}
\end{lem}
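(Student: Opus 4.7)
My plan is to pass to the limit $k\to\infty$ in both integrals, reducing the claims to statements about the limit $u_\infty=\mathcal{G}+\mathcal{U}$ of Lemma \ref{le3.5}, and then to apply a Fubini averaging in the radial variable together with absolute continuity of the Lebesgue integral to select $r=r(\e)$.

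First I would fix $R_0>0$ small enough so that $\mathcal{S}\cap B_{2R_0}(x_0)=\{x_0\}$; then for $r\in(0,R_0)$ the arc $K_r:=\pa B_r(x_0)\cap\ov\O$ is compactly contained in $\ov\O\setminus\mathcal{S}$. By Lemma \ref{le3.4} the sequence $u_k$ is uniformly bounded on $K_r$; combined with the Dirichlet condition and Schauder estimates up to $\pa\O$, this yields uniform $C^{1,\alpha}$ bounds on $K_r$, so (along a subsequence) $u_k\to u_\infty$ in $C^1(K_r)$, while $z_k\to z_\infty$ with $|x-z_\infty|\le 3r$ on $K_r$.

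For (\ref{3.17}) the $C^0$-convergence lets me pass to the limit: $h_i^ke^{\pm u_k}\to r_i$ on $K_r$, where $r_1,r_2\in L^1(\O)$ are the absolutely continuous densities from Lemma \ref{le3.5}. The polar-coordinate identity
\[
\int_0^{R_0}\int_{K_r\cap\O}(r_1+\tfrac12r_2)\,d\sigma\,dr=\int_{B_{R_0}(x_0)\cap\O}(r_1+\tfrac12r_2)\,dx,
\]
combined with absolute continuity of the $L^1$-integral, gives for any $\e>0$ an $R=R(\e)<R_0$ making the right-hand side smaller than $\e$; a mean-value choice then produces $r(\e)\in(0,R)$ such that $r\int_{K_r\cap\O}(r_1+\tfrac12r_2)\,d\sigma\le\e$, and (\ref{3.17}) follows from $|\langle x-z_\infty,\nu\rangle|\le 3r$.

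The same scheme for (\ref{3.18}) bounds the integrand by $\tfrac32|x-z_\infty||\nabla u_\infty|^2$ after passing to the limit via $C^1$-convergence, so it reduces to showing $|\nabla u_\infty|^2\in L^1_{\mathrm{loc}}$ near $x_0$. This is the main obstacle, since a priori $u_k$ is not bounded in $H^1$ (the Dirichlet energy concentrates in the bubble). To resolve it I would exploit the structure of Lemma \ref{le3.5}: the Dirac part of $-\Delta u_\infty$ is supported only on $\mathcal{S}\cap\O$, so near the boundary point $x_0$ the function $\mathcal{G}$ solves a Poisson equation with smooth data and vanishing Dirichlet value, and is therefore $C^\infty$ up to $\pa\O$ there by Schauder; for $\mathcal{U}$, since $u_\infty|_{\pa\O}=0$ in a neighborhood of $x_0$ by $C^1(K_r)$-convergence, a bootstrap using $\Delta\mathcal{U}=r_2-r_1$, the identities $r_i=h_ie^{\pm u_\infty}$ on $\O\setminus\mathcal{S}$ and elliptic regularity up to $\pa\O$ upgrades $\mathcal{U}$ to $C^{1,\alpha}$ at $x_0$. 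With $|\nabla u_\infty|^2$ then bounded in a neighborhood of $x_0$, the Fubini/absolute continuity step used for (\ref{3.17}) applies verbatim, yielding (\ref{3.18}).
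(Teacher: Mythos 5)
Your reduction to the limit $u_\infty=\mathcal G+\mathcal U$ and the treatment of $\mathcal G$ match the paper, and your Fubini/mean-value selection is a perfectly workable (arguably cleaner) route to \eqref{3.17}, since $r_1,r_2\in L^1(\Omega)$ is all that step needs. The gap is in \eqref{3.18}.

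Your argument for \eqref{3.18} hinges on showing $|\nabla\mathcal U|^2\in L^1_{\mathrm{loc}}$ near $x_0$, and the bootstrap you propose to upgrade $\mathcal U$ to $C^{1,\alpha}$ at $x_0$ cannot work. The point $x_0$ belongs to $\mathcal S$, so the data $r_i=h_ie^{\pm u_\infty}$ are \emph{not} smooth (nor even bounded) there: the only a priori information is $r_i\in L^1$ together with the decay estimate $|x-x_0|^2\max\{|r_1(x)|,|r_2(x)|\}\le C$ from Lemma \ref{le3.3}, i.e. $|r_i|$ may behave like $|x-x_0|^{-2}$ up to a small constant. With that data the Green representation gives at best $|\nabla\mathcal U(x)|\lesssim |x-x_0|^{-1}$, which is exactly borderline not $L^2$ in dimension two; so $|\nabla\mathcal U|^2$ need not be locally integrable and the Fubini selection has nothing to average. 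The bootstrap is also circular: to claim $r_i$ is regular near $x_0$ you would need $u_\infty$ (equivalently $\mathcal U$) regular near $x_0$, which is precisely what is to be proved. This non-regularity is not a technical inconvenience; it is the ``mass residual'' phenomenon (cf.\ \cite{apr}) that is the whole difficulty of the boundary exclusion and the reason $u_k$ is only claimed to converge in $W^{1,q}_0$ for $q<2$, not in $H^1$, in Lemma \ref{le3.5}.

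The paper's proof avoids this entirely by establishing a quantitative \emph{pointwise} bound, uniform in $r$: splitting $\Omega$ into $B_{r'}(x_0)$ and its complement, then splitting $B_{r'}(x_0)$ into $B_{r/N}(x)$ and its complement, and using the decay estimate of Lemma \ref{le3.3} on the inner piece and the smallness of $\int_{B_{r'}}(|r_1|+|r_2|)$ on the outer piece, one gets
$$
|\nabla\mathcal U(x)|\le \frac{C}{r'}+C\left(\frac1N+\delta N\right)\frac{1}{|x-x_0|},\qquad x\in\partial B_r(x_0)\cap\Omega,
$$
with $N,r',\delta$ chosen \emph{independently of $r$}. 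Plugging this into $\int_{\partial B_r\cap\Omega} r|\nabla\mathcal U|^2$ (arc length $\sim r$, weight $\sim r$) yields $C r^2/r'^2 + C(1/N+\delta N)$, which is $O(\e)$ after first fixing $N$ large, then $r'$ (hence $\d$) small, then $r$ small. The key idea you are missing is that although the leading $1/|x-x_0|$ singularity of $\nabla\mathcal U$ cannot be removed, its coefficient can be made as small as you like uniformly in $r$, and that smallness — not integrability of $|\nabla\mathcal U|^2$ — is what makes the surface integral go to zero.
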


\begin{proof}
We start by recalling that from Lemma (\ref{le3.5}) we have $u_k\rightarrow\mathcal{G}+\mathcal{U}$ in $C_{\mathrm{loc}}^\infty(\o\setminus\mathcal{S})$. Take now $r\in\left(0,\frac12\mathrm{dist}(x_0,\mathcal{S}\setminus\{x_0\})\right)$. Then, $\|\mathcal{G}\|_{C^2(\o\cap\p B_r(x_0))}\leq C$ on $\o\cap\p B_{r}(x_0),$ for some $C$ independent of $r$. Moreover, observe that
$$|x-z_k|=|x-x_0-\Theta_{k,r}\nu(x_0)|\leq|x-x_0|+|\Theta_{k,r}|=O(r), \qquad \mathrm{for}~x\in\p B_{r}(x_0)\cap\o.$$
By the latter estimates, the conclusion of the lemma will follow by showing that for any $\e$ there exists $r=r(\e)$ such that
\begin{align}
\label{3.19}
\begin{split}
&\int_{\o\cap\p B_r(x_0)}re^{2|\mathcal{U}|}=O(\e), \\
&\int_{\o\cap\p B_r(x_0)}r|\nabla \mathcal{U}|^2=O(\e).
\end{split}
\end{align}
The proof of the above two equalities are almost the same and we only give the details of the second one. Recalling that
$$\Delta \mathcal{U}(x)+r_1(x)-r_2(x)=0 ~\mathrm{in}~\o,\qquad \mathcal{U}(x)=0~\mathrm{on}~\p\o,$$
for any $x\in \p B_r(x_0)\cap\o$ we have by the representation formula
\begin{align}
\label{3.20}
|\nabla \mathcal{U}(x)|=&\int_{\o}\left|\nabla G(x,y)\bigr(r_1(y)-r_2(y)\bigr)\right|\,\mathrm{d}y\nonumber\\
\leq&~C\int_{\o}\frac{1}{|x-y|}\bigr(|r_1(y)|+|r_2(y)|\bigr)\,\mathrm{d}y\\
\leq&~C\int_{\o\cap B_{r'}(x_0)}\frac{1}{|x-y|}\bigr(|r_1(y)|+|r_2(y)|\bigr)\,\mathrm{d}y+C\int_{\o\setminus B_{r'}(x_0)}\frac{1}{|x-y|}\bigr(|r_1(y)|+|r_2(y)|\bigr)\,\mathrm{d}y\nonumber,
\end{align}
with $r'$ such that $B_{3r'}(x_0)\cap(\mathcal{S}\setminus\{x_0\})=\emptyset$ and
\begin{align*}
\int_{\o\cap B_{r'}(x_0)}\bigr(|r_1(y)|+|r_2(y)|\bigr)\mathrm{d}y\leq\delta
\end{align*}
where $\delta$ will be determined later. Observe that $r'$, $\delta$ and the constants $C$ in (\ref{3.20}) are independent of $r$. For the integral outside $B_{r'}(x_0)$ in (\ref{3.20}) we have
\begin{align}
\label{3.21}
\int_{\o\setminus B_{r'}(x_0)}\frac{1}{|x-y|}\bigr(|r_1(y)|+|r_2(y)|\bigr)\,\mathrm{d}y\leq C\frac{1}{r'}\,,
\end{align}
where $C=C\left(\|r_i\|_{L^1(\o)},\o\right),~i=1,2$. For the other term in \eqref{3.20} we consider the following:
\begin{align*}
&\int_{\o\cap B_{r'}(x_0)}\frac{1}{|x-y|}\bigr(|r_1(y)|+|r_2(y)|\bigr)\,\mathrm{d}y\\
=&\int_{(\o\cap B_{r'}(x_0))\cap B_{\frac{r}{N}}(x)}\frac{1}{|x-y|}\bigr(|r_1(y)|+|r_2(y)|\bigr)\,\mathrm{d}y\\
&+\int_{(\o\cap B_{r'}(x_0))\setminus B_{\frac{r}{N}}(x)}\frac{1}{|x-y|}\bigr(|r_1(y)|+|r_2(y)|\bigr)\,\mathrm{d}y\\
=&~I_1+I_2,
\end{align*}
where $N$ will be suitably chosen later.

We start by estimating $I_1$. By Lemma \ref{le3.3} and using $B_{3r'}(x_0)\cap(\mathcal{S}\setminus\{x_0\})=\emptyset,$ we get
\begin{equation*}
|x-x_0|^2\max\bigr\{|r_1(x)|,|r_2(x)|\bigr\}\leq C, \qquad \mathrm{in}~\o\cap B_{r'}(x_0).
\end{equation*}
We may further assume $r<\min\bigr\{\frac{r'}{4},\frac12\mathrm{dist}\bigr(x_0,\mathcal{S}\setminus\{x_0\}\bigr)\bigr\}$. Observe that
$$|y-x_0|\geq|x-x_0|-|x-y|=\frac{N-1}{N}r, \qquad y\in B_{\frac{r}{N}}(x).$$
It follows that
\begin{equation*}
\max\bigr\{|r_1(y)|,|r_2(y)|\bigr\}\leq C\Big(\frac{N}{N-1}\Big)^2\frac{1}{r^2}\,.
\end{equation*}
Therefore, we deduce
\begin{align}
\label{3.24}
I_1\leq C\Big(\frac{N}{N-1}\Big)^2\frac{1}{r^2}\frac{r}{N}
\leq C\frac{1}{N}\frac{1}{r}\,.
\end{align}
For what concerns $I_2$, observing that $\frac{1}{|x-y|}\leq\frac{N}{r}$ for $y\in\bigr(\o\cap B_{r'}(x_0)\bigr)\setminus B_{\frac{r}{N}}(x)$ we get
\begin{align}
\label{3.25}
I_2\leq C\frac{N}{r}\int_{\o\cap B_{r'}(x_0)}\bigr(|r_1(y)|+|r_2(y)|\bigr)\,\mathrm{d}y\leq C\delta\frac{N}{r}\,.
\end{align}
From (\ref{3.20})-(\ref{3.25}), we deduce
$$|\nabla\mathcal{U}(x)|\leq C\frac{1}{r'}+C\left(\frac{1}{N}+\delta N\right)\frac{1}{r}\,.$$
To conclude we have to determine $N,r'$ and $r$. We start by choosing $N$ sufficiently large and then $r'$ small such that $C\big(\frac{1}{N}+\delta N\big)<\varepsilon.$ Note that the choices of $N$ and $r'$ are independent of $r$. Finally, $r$ is taken sufficiently small such that
\begin{align*}
C\frac{r^2}{r'^{\hspace{0.01cm}2}}+C\left(\frac{1}{N}+\delta N\right)\leq C\e.
\end{align*}
This conclude the second estimate in (\ref{3.19}). The argument for the first estimate is very similar. Hence, we prove the lemma and the claim in the proof of Theorem \ref{th:boundary}.
\end{proof}

\

\section{The Moser-Trudinger inequality} \label{sec:m-t}

\medskip

We are concerned now with the equation \eqref{eq3} defined on a compact surface. In this section we give a proof of the sharp Moser-Trudinger inequality related to this problem, see Theorem \ref{th:m-t}. The argument is mainly based on the blow-up analysis and it relies on the quantization result obtained in Theorem \ref{th:quantization}. On can reason similarly as in \cite{os}. We follow the strategy introduced by W. Ding in \cite{ding} for the standard Moser-Trudinger inequality and then used in \cite{bat-mal, jost-wang} for the inequality related to the Toda systems. For what concerns the optimal inequality we mainly follow the argument in \cite{bat-mal}. Such inequality was derived also in \cite{ri-su} from a dual point of view in the framework of equations involving probability measures.

We start by giving a description of the blow-up phenomenon, then we first prove a partial result concerning Theorem \ref{th:m-t} by introducing a modified functional and in a second step we complete the proof of the sharp result.

\medskip

\subsection{Preliminaries}

For a sequence of solutions $u_k$ of \eqref{eq3} relative to $\rho_{i,k} \to \bar \rho_i$, $i=1,2$, we consider the normalized functions
\begin{align*}
&	u_{1,k} = u_k - \log \int_M h_{1} \,e^{u_{k}} \,dV_g, \\
&	u_{2,k} = -2u_k - \log \int_M h_{2} \,e^{-2u_{k}} \,dV_g,
\end{align*}
that satisfy
$$
	-\D u_{1,k} = \rho_{1,k} \left(h_1 e^{u_{1,k}}-1\right) - \rho_{2,k} \left( h_2 e^{u_{2,k}}-1\right).
$$
Observe that
$$
	\int_M h_i e^{u_{i,k}} \,dV_g = 1, \quad i=1,2.
$$
We define the blow-up sets to be
\begin{equation} \label{blow-up set}
S_i= \biggr\{ p\in M \,:\, \exists\{x_k\}\subset M, \, x_k\to p, \, u_{i,k}(x_k) \to +\infty \biggr\}, \quad i=1,2.
\end{equation}
We point out that the argument for the quantization property in Theorem \ref{th:quantization} can be adapted to the above equation: it is then standard to get the following alternative, see for example \cite{jwy2,os} (see also Lemma \ref{le3.5}). We point out that all the following results hold true up to adding suitable constants.
\begin{theorem} \label{th:blow-up}
Let $u_k$ be a sequence of solutions to \eqref{eq3} relative to $\rho_{i,k} \to \bar \rho_i$, $i=1,2$, and let $S=S_1\cup S_2$ where $S_i$, $i=1,2,$ are defined in \eqref{blow-up set}. Then, up to subsequences, the following alternative holds true:
\begin{enumerate}
	\item (compactness) $S=\emptyset$ and $u_k$ is uniformly bounded in $L^{\infty}(M)$.
	
	\item (blow-up) $S\neq\emptyset$ and it is finite. It holds
	$$
		\rho_{i,k} h_i e^{u_{i,k}} \rightharpoonup r_i + \sum_{p\in S_i} m_i(p) \d_p, \quad i=1,2,
	$$
	in the sense of measures, where $r_i \in L^1(M)\cap L^{\infty}_{loc}(M\setminus S_i)$ and
	$$
		m_i(p) = \lim_{r\to 0} \lim_{k\to+\infty} \rho_{i,k} \int_{B_r(p)} h_i e^{u_{i,k}} \,dV_g.
	$$
	Moreover, $m_1(p)\in8\pi\N$, $m_2(p)\in4\pi\N$ and $r_i=0$ for some $i=1,2$.
\end{enumerate}
\end{theorem}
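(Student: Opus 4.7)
The plan is to execute the standard concentration-compactness alternative for Liouville-type equations on compact surfaces, using Theorem \ref{th:quantization} to quantize the atomic part. Since $\int_M h_i e^{u_{i,k}}\,dV_g = 1$ and $\rho_{i,k}\to\bar\rho_i$, the Radon measures $\mu_{i,k}:=\rho_{i,k}h_i e^{u_{i,k}}\,dV_g$ have uniformly bounded total variation; extracting subsequences gives weak-$*$ limits $\mu_{i,k}\rightharpoonup\mu_i$ in $\M(M)$. First I would establish a Brezis--Merle type $\varepsilon$-regularity statement: choose $\varepsilon_0\in(0,4\pi)$ strictly below the minimum nonzero atom mass permitted by Theorem \ref{th:quantization}; if $\mu_{i,k}(B_{2r}(p))<\varepsilon_0$ on some ball, then using the equation $-\Delta u_{i,k}=\rho_{1,k}h_1 e^{u_{1,k}}-\rho_{2,k}h_2 e^{u_{2,k}}+\mathrm{const}$ and the Brezis--Merle inequality one deduces an $L^\infty(B_{r/2}(p))$ bound on $u_{i,k}^+$. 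Consequently $S_i=\{p\in M:\mu_i(\{p\})\ge\varepsilon_0\}$ coincides with the blow-up set in \eqref{blow-up set} and is finite by the total mass bound. If $S:=S_1\cup S_2=\emptyset$ this already yields uniform $L^\infty$ bounds on $u_{i,k}$, that is, the compactness alternative.

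In the blow-up case $S\ne\emptyset$, on $M\setminus S$ both sequences $u_{i,k}^+$ are locally bounded, and the classical Brezis--Merle dichotomy combined with the Green representation gives, along a further subsequence, that each $u_{i,k}$ either converges in $C^2_{\rm loc}(M\setminus S)$ to a finite function $u_{i,\infty}$ or tends to $-\infty$ locally uniformly. This produces the decomposition $\mu_i=r_i\,dV_g+\sum_{p\in S_i}m_i(p)\delta_p$ with $r_i=\bar\rho_i h_i e^{u_{i,\infty}}\in L^1(M)\cap L^\infty_{\rm loc}(M\setminus S_i)$ in the first alternative and $r_i\equiv 0$ in the second. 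For the quantization of the atomic masses, around each $p\in S$ I would work in isothermal coordinates on a small ball $B_r(p)$: the hypotheses \eqref{a1}--\eqref{a3} of Theorem \ref{th:quantization} are verified using the $L^\infty$ bound outside $S$, the Harnack-type estimate of Proposition \ref{pr2.2}, and the good-radius selection of Lemma \ref{le2.1}, so that Theorem \ref{th:quantization} yields $m_1(p)=2\pi\sigma_1\in 8\pi\N$ and $m_2(p)=2\pi\sigma_2\in 4\pi\N$.

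The subtlest point is the vanishing of at least one of the residuals $r_1,r_2$. The idea I would follow is that blow-up of $u_{1,k}$ at some $p$ forces $u_k\to+\infty$ there through $u_{1,k}=u_k-\log\int h_1 e^{u_k}$ (with the normalizing constant bounded below via Jensen after fixing the mean of $u_k$), and symmetrically $S_2\ne\emptyset$ forces $u_k\to-\infty$ somewhere. Combining the atomic quantization from the previous step with the Green-function asymptotics of the limit profile, analogous to Lemma \ref{le3.5}, one rules out the simultaneous persistence of two nontrivial diffuse parts alongside nontrivial atoms, forcing $r_i\equiv 0$ for at least one index. This is precisely the main obstacle: the $\varepsilon$-regularity and atomic quantization steps are routine adaptations of the sinh-Gordon analysis of \cite{jwy2,os} once Theorem \ref{th:quantization} is available, but the residual-vanishing alternative genuinely uses the specific Tzitz\'eica structure and requires the most care.
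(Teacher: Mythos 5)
Your outline (extract weak-$*$ limits, Brezis--Merle $\varepsilon$-regularity, reduction to Theorem~\ref{th:quantization} in isothermal coordinates, Green representation of the limit profile as in Lemma~\ref{le3.5}) is the same route the paper follows --- indeed the paper only states the result and defers the details to \cite{jwy2,os,rz}. The identification of $S_i$ with the atomic set, the finiteness of $S$, and $m_1(p)\in 8\pi\N$, $m_2(p)\in 4\pi\N$ are handled correctly.

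The step that is genuinely incomplete is the vanishing of one residual. Your heuristic via Jensen (``blow-up of $u_{1,k}$ forces $u_k\to+\infty$, blow-up of $u_{2,k}$ forces $u_k\to-\infty$'') shows the two components pull against each other but by itself does not exclude bounded residuals on both sides: writing $c_k:=u_{2,k}+2u_{1,k}=-2\log\int_M h_1 e^{u_k}\,dV_g-\log\int_M h_2 e^{-2u_k}\,dV_g$, nothing in that argument prevents $c_k$ from staying bounded, in which case $u_{1,k}$ and $u_{2,k}$ are locally bounded in $M\setminus S$ and both $r_i$ are positive. The mechanism that actually forces $r_i\equiv 0$ for some $i$ is local non-integrability near the atoms. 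If both $r_1,r_2\not\equiv 0$, then $u_{1,k}\to u_{1,\infty}$ in $C^2_{\mathrm{loc}}(M\setminus S)$ and $u_{2,\infty}=-2u_{1,\infty}+c_\infty$; the Green representation (as in Lemma~\ref{le3.5}) gives $u_{1,\infty}(x)=-\frac{m_1(p)-m_2(p)}{2\pi}\log d(x,p)+O(1)$ near $p\in S$. Integrability of $h_1e^{u_{1,\infty}}$ at $p$ requires $m_1(p)-m_2(p)<4\pi$, and of $h_2e^{u_{2,\infty}}$ requires $m_1(p)-m_2(p)>-2\pi$. But the quantization $m_1(p)\in8\pi\N$, $m_2(p)\in4\pi\N$ combined with the Pohozaev relation \eqref{2.3} rewritten as $\bigl(m_1(p)-m_2(p)\bigr)^2=4\pi\bigl(2m_1(p)+m_2(p)\bigr)$ forces $m_1(p)-m_2(p)$ to be a nonzero multiple of $4\pi$, so $|m_1(p)-m_2(p)|\geq 4\pi$, which violates one of the two integrability constraints. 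This contradiction is the input you need to make explicit: your phrase ``Green-function asymptotics $\ldots$ rules out the simultaneous persistence'' gestures at the right object, but without the lower bound $|m_1(p)-m_2(p)|\geq 4\pi$ (the Tzitz\'eica-specific content) the conclusion $r_i\equiv 0$ does not follow.
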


\medskip

\no We state now some important corollaries that will be used later on in the existence problem. A direct consequence of the latter result is the following compactness property.
\begin{cor} \label{cmpt}
Let $\Lambda=(8\pi\N \times \R) \cup (\R \times 4\pi\N)$. Suppose $\rho=(\rho_1,\rho_2)$ are in a fixed compact set of $\R^2 \setminus \L$. Then, the set of solutions $\{u_\rho\}_\rho$ is uniformly bounded in $L^\infty(M)$.
\end{cor}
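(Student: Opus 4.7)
The plan is to argue by contradiction, using Theorem \ref{th:blow-up} as the main input. Suppose the conclusion fails: then one can extract a sequence $u_k$ of solutions of \eqref{eq3} associated with parameters $\rho_k=(\rho_{1,k},\rho_{2,k})$ in the given compact subset of $\R^2\setminus\Lambda$, with $\rho_k\to\bar\rho\notin\Lambda$, such that $\|u_k\|_{L^\infty(M)}\to +\infty$. Since \eqref{eq3} is invariant under the addition of constants, I would first normalize the $u_k$ (for instance by requiring $\int_M u_k\,dV_g=0$) and observe that, after this normalization, $L^\infty$-unboundedness forces at least one of the densities $\rho_{i,k} h_i e^{u_{i,k}}$ introduced before Theorem \ref{th:blow-up} to have a nontrivial concentration. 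Equivalently, the compactness alternative in Theorem \ref{th:blow-up} is ruled out and we must be in the second (blow-up) alternative, with $S=S_1\cup S_2\neq\emptyset$.

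The second step is to extract the arithmetic contradiction from the quantization. By the blow-up alternative,
\begin{equation*}
\rho_{i,k}\,h_i\,e^{u_{i,k}}\,dV_g \;\rightharpoonup\; r_i\,dV_g + \sum_{p\in S_i} m_i(p)\,\delta_p,\qquad i=1,2,
\end{equation*}
with $m_1(p)\in 8\pi\N$, $m_2(p)\in 4\pi\N$, and $r_i\equiv 0$ for at least one $i\in\{1,2\}$. Integrating over $M$ and using the normalization $\int_M h_i\,e^{u_{i,k}}\,dV_g=1$ yields, in the limit,
\begin{equation*}
\bar\rho_i = \int_M r_i\,dV_g + \sum_{p\in S_i} m_i(p).
\end{equation*}
For the index $i$ for which $r_i\equiv 0$, this identity forces $\bar\rho_1\in 8\pi\N$ if $i=1$ or $\bar\rho_2\in 4\pi\N$ if $i=2$, and thus $\bar\rho\in\Lambda$, contradicting the assumption on the compact set.

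The only genuine subtlety lies in the bookkeeping of the first step: one has to verify that, after eliminating the additive constant freedom, loss of $L^\infty$ control of $u_k$ really triggers the blow-up branch of Theorem \ref{th:blow-up} rather than a spurious constant drift in the unnormalized $u_k$. Once this is settled, the remainder is essentially arithmetic: the quantization $m_1\in 8\pi\N$, $m_2\in 4\pi\N$ provided by Theorem \ref{th:quantization}, together with the fact that $r_i\equiv 0$ for some $i$ (which itself originates from the Pohozaev relation \eqref{2.3}), immediately places $\bar\rho$ in $\Lambda$ and concludes the proof.
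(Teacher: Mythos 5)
Your proof is correct and is, as far as one can tell, exactly the argument the paper has in mind: the paper states Corollary \ref{cmpt} as ``a direct consequence'' of Theorem \ref{th:blow-up} with no further detail, and your argument---extract a sequence with $\rho_k\to\bar\rho\notin\Lambda$ and unbounded $u_k$, invoke the blow-up alternative, integrate the weak convergence against the constant function $1$ to get $\bar\rho_i=\int_M r_i\,dV_g+\sum_{p\in S_i}m_i(p)$, and use $r_i\equiv 0$ together with $m_1\in 8\pi\N$, $m_2\in 4\pi\N$ to force $\bar\rho\in\Lambda$---is the natural and essentially unique way to make that one-line claim precise. The normalization caveat you flag (the equation is constant-invariant, so $L^\infty$ boundedness is only meaningful after fixing the average, in line with the paper's remark that the statements hold ``up to adding suitable constants'') is exactly the right point to raise, and your handling of it is consistent with the paper's conventions.
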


\medskip

The latter uniform bound implies that one can take a high sublevel $J_\rho^L$ containing all the critical points of the functional. Then we can deform the whole space $H^1(M)$ onto this sublevel just by following a gradient flow to get the following result.
\begin{cor} \label{high}
Suppose $\rho=(\rho_1,\rho_2)\notin \Lambda$. Then, for some large $L>0$, $J_\rho^L$ is a deformation retract of $H^1(M)$ and in particular it is contractible.
\end{cor}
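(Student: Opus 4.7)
\begin{pf}
The argument is the standard deformation retraction via a pseudo--gradient flow, once one has a uniform bound on critical points. I plan to proceed in three steps.

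\smallskip

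First, by Corollary \ref{cmpt}, since $\rho\notin\Lambda$, the set $\mathcal K_\rho$ of critical points of $J_\rho$ (i.e.\ solutions of \eqref{eq3}) is uniformly bounded in $L^\infty(M)$, hence in $H^1(M)$ by standard elliptic regularity. In particular, the set of critical values $J_\rho(\mathcal K_\rho)\subset\R$ is bounded above, and I may choose
$$
  L > \sup_{u\in\mathcal K_\rho} J_\rho(u)
$$
so that $J_\rho$ has no critical values in $[L,+\infty)$.

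\smallskip

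Second, I verify a Palais--Smale type condition at every level $c>L$. A Palais--Smale sequence $\{u_n\}$ with $J_\rho(u_n)\to c$ and $J_\rho'(u_n)\to 0$ in $H^{-1}(M)$ gives rise, after normalization as in the beginning of Section \ref{sec:m-t}, to an approximate sequence of solutions to \eqref{eq3}; the blow--up analysis underlying Theorem \ref{th:blow-up} (which extends to such sequences in a standard way, see e.g.\ the analogue for the sinh--Gordon case in \cite{jwy2}) forces any concentration masses to lie in $8\pi\N$ and $4\pi\N$ respectively. Since $\rho\notin\Lambda$, no such concentration is possible, so $\{u_n\}$ is pre--compact in $H^1(M)$ and its limit would be a critical point at level $c>L$, contradicting the choice of $L$.

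\smallskip

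Third, I build the retraction. Take a locally Lipschitz pseudo--gradient vector field $V$ for $J_\rho$ on $H^1(M)\setminus\mathcal K_\rho$, truncated to be globally bounded, and let $\eta(t,u)$ be the flow of $-V$. By the $L^\infty$--bound on $\mathcal K_\rho$ and the absence of critical values above $L$, standard arguments (deformation lemma, cf.\ the Liouville case in \cite{mal}) show that for every $u\in H^1(M)$ with $J_\rho(u)>L$ the trajectory $t\mapsto\eta(t,u)$ enters $J_\rho^L$ in finite time $T(u)$, with $T(u)$ continuous in $u$. Setting
$$
  H(s,u)=\eta\bigl(s\,T(u),u\bigr)\quad\text{if }J_\rho(u)>L,\qquad H(s,u)=u\quad\text{if }J_\rho(u)\le L,
$$
produces a continuous deformation $H:[0,1]\times H^1(M)\to H^1(M)$ with $H(0,\cdot)=\mathrm{id}$, $H(1,H^1(M))\subset J_\rho^L$ and $H(s,\cdot)|_{J_\rho^L}=\mathrm{id}$, i.e.\ a strong deformation retraction. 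Since $H^1(M)$ is contractible, so is the retract $J_\rho^L$.

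\smallskip

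The only step requiring care is the second one: the lack of a naive Palais--Smale condition for functionals with exponential nonlinearities is bypassed precisely by the quantization of concentration masses from Theorem \ref{th:quantization}, which is why the hypothesis $\rho\notin\Lambda$ enters. The rest is a routine gradient--flow deformation.
\end{pf}
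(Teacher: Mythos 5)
Your overall strategy coincides with the paper's (which is stated very briefly): use Corollary \ref{cmpt} to bound all critical points of $J_\rho$ below some level $L$, then deform $H^1(M)$ onto $J_\rho^L$ by a (pseudo-)gradient flow. Steps 1 and 3 of your argument are exactly what the paper has in mind.

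The gap is in Step 2, where you assert a Palais--Smale condition above level $L$ by invoking an extension of the blow-up quantization in Theorem \ref{th:blow-up} to Palais--Smale sequences. This extension is not contained in the paper: Theorems \ref{th:quantization} and \ref{th:blow-up} are established for \emph{exact} solutions of \eqref{eq3}, not for sequences that solve the equation only up to an $H^{-1}$-small error. Carrying the selection process, the Pohozaev identity \eqref{2.2}--\eqref{2.3}, and Theorem A through in the presence of vanishing error terms is a genuine piece of analysis (one must check, at least, that the perturbation does not upset the Pohozaev balance and the global quantization for the limit profiles), and the citations you give do not cover it. The failure of the Palais--Smale condition for such exponential functionals is precisely the well-known obstruction in this area.

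The route the paper intends (and makes explicit immediately after, when deriving Corollary \ref{top-arg} by \emph{``suitably adapting the argument in \cite{lucia}''}) avoids verifying PS for the fixed $\rho$ altogether. Instead one combines (i) the uniform compactness of solution sets for $\rho'$ ranging in a compact neighborhood of $\rho$, which follows from Corollary \ref{cmpt}, with (ii) a Struwe-type monotonicity-in-$\rho$ argument, packaged in Lucia's deformation lemma. To repair your Step 2, either replace it with an appeal to Lucia's deformation lemma exactly as in Corollary \ref{top-arg} (this is the paper's intended justification), or supply the missing extension of the blow-up quantization to approximate solutions; the latter is a nontrivial addendum and is not ``standard'' in the sense your proposal suggests.
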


\medskip

Moreover, exploiting the compactness result one can suitably adapt the argument in \cite{lucia} to get the following useful topological argument.
\begin{cor} \label{top-arg}
Let $a,b\in\R$ be such that $a<b$ and $J_\rho$ has no critical points $u\in H^1(M)$ with $a\leq J_\rho(u) \leq b$. Suppose $\rho=(\rho_1,\rho_2)\notin \Lambda$. Then, $J_\rho^a$ is a deformation retract of $J_\rho^b$.
\end{cor}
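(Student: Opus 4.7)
The plan is to establish the deformation retraction via a pseudo-gradient flow, bypassing the usual Palais--Smale requirement by invoking the compactness provided by Corollary \ref{cmpt}. This follows the scheme introduced by Lucia \cite{lucia} for mean field equations. First I would construct a locally Lipschitz pseudo-gradient vector field $V$ for $J_\rho$ on the open set $\{u:\n J_\rho(u)\neq 0\}\supset J_\rho^{-1}[a,b]$, satisfying the standard estimates $\|V(u)\|\le 2\|DJ_\rho(u)\|$ and $\langle V(u),DJ_\rho(u)\rangle\ge \tfrac12\|DJ_\rho(u)\|^2$. Truncating $V$ to vanish on $J_\rho^{a-\delta}$ and extending by zero, the flow $\eta(t,\cdot):H^1(M)\to H^1(M)$ generated by $\dot\eta=-V(\eta)$ is globally defined, fixes $J_\rho^a$, and satisfies $\frac{d}{dt}J_\rho(\eta)\le -\tfrac12\|DJ_\rho(\eta)\|^2$. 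Along an orbit starting in $J_\rho^b$ the functional is monotone nonincreasing, so either the orbit enters $J_\rho^a$ in finite time or it remains in $J_\rho^{-1}[a,b]$ for all $t\ge 0$.

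The crux is to exclude the second alternative. If some orbit $\eta(\cdot,u_0)$ stays in the strip forever, then $\int_0^\infty\|DJ_\rho(\eta(t,u_0))\|^2\,dt<\infty$, so there exists a sequence $t_n\to\infty$ such that $u_n:=\eta(t_n,u_0)$ is a Palais--Smale sequence for $J_\rho$ at some level $c\in[a,b]$. I would then argue that such a sequence must be precompact: each $u_n$ solves \eqref{eq3} up to a remainder $f_n\to 0$ in $H^{-1}(M)$, and, as in Lucia's trick, one can produce exact solutions $\tilde u_n$ of \eqref{eq3} at slightly perturbed parameters $\tilde\rho_n\to\rho$ (the perturbation absorbing the normalization constants of $h_i e^{\pm}$ and the remainder). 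Since $\rho\notin\L$ and $\L$ is closed, for large $n$ the $\tilde\rho_n$ lie in a compact neighborhood of $\rho$ disjoint from $\L$, so Corollary \ref{cmpt} furnishes a uniform $L^\infty$-bound on $\{\tilde u_n\}$.

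The main obstacle is converting this bound into $H^1$-precompactness of $u_n$, because the nonlinearities $e^u$, $e^{-2u}$ are not globally Lipschitz on $H^1(M)$. I would handle this by writing the linearized equation for $u_n-\tilde u_n$: the $L^\infty$-bound on $\tilde u_n$ and elliptic regularity give $\tilde u_n$ in a fixed bounded subset of $C^{2,\alpha}(M)$, while testing the equation for $u_n-\tilde u_n$ against $u_n-\tilde u_n$ and using the $H^{-1}$-smallness of $f_n$ together with the convexity of $s\mapsto e^s$ forces $\|u_n-\tilde u_n\|_{H^1}\to 0$; a standard Moser iteration then transfers the $L^\infty$-bound to $u_n$ itself, on which set the nonlinear map becomes Lipschitz and $H^1$-convergence of a subsequence follows. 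The limit $u_\infty$ is then a critical point of $J_\rho$ at level $c\in[a,b]$, contradicting the hypothesis. With the second alternative ruled out, the hitting time $T(u_0):=\inf\{t\ge 0:\eta(t,u_0)\in J_\rho^a\}$ is finite and continuous on $J_\rho^b$; the map $(s,u_0)\mapsto\eta(sT(u_0),u_0)$ then realizes the desired strong deformation retract of $J_\rho^b$ onto $J_\rho^a$.
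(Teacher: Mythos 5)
Your overall scheme (pseudo-gradient flow, extraction of a Palais--Smale sequence along a trapped orbit, contradiction via compactness) is a reasonable way to set up a deformation lemma, and the final step of promoting an $L^\infty$-bound to $H^1$-precompactness is fine once a bounded approximating family of exact solutions is available. However, the crux of your argument --- the passage from the Palais--Smale sequence $u_n$ to a family $\tilde u_n$ of \emph{exact} solutions of \eqref{eq3} at perturbed parameters $\tilde\rho_n\to\rho$ --- is not justified and is, as far as I can tell, false. The remainder $f_n = D J_\rho(u_n)$ is a general element of $H^{-1}(M)$ that tends to zero, with no special structure, whereas the parameters $\rho=(\rho_1,\rho_2)$ span only a two-dimensional family; there is no mechanism for writing $-\Delta u_n = \rho_1(h_1 e^{u_n}/\!\int\!-1) - \rho_2(h_2 e^{-2u_n}/\!\int\!-1) + f_n$ as an exact critical-point equation at nearby parameters. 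Absorbing the normalization constants $\int h_i e^{\pm}$ by shifting $u_n$ by a constant is harmless, but the $H^{-1}$-remainder cannot be absorbed this way. This is not Lucia's trick: Corollary~\ref{cmpt} gives compactness for \emph{solutions}, not for Palais--Smale sequences, and turning solution-set compactness into compactness of Palais--Smale sequences is exactly the difficulty that the Lucia-type argument is designed to sidestep rather than prove.

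The route the paper (via \cite{lucia}) actually intends is structurally different. Lucia's deformation lemma does not establish the Palais--Smale condition in the strip; it argues by contradiction through a minimax construction combined with Struwe's monotonicity trick. Roughly: if $J_\rho^a$ were not a deformation retract of $J_\rho^b$, one defines a minimax value $c\in(a,b]$ over energy-decreasing deformations that fix $J_\rho^a$; since $t\mapsto J_{t\rho}$ is monotone in $t$ with controlled $t$-derivative, the corresponding values $c(t)$ are monotone, hence differentiable for a.e.\ $t$ near $1$, and at such $t$ a bounded Palais--Smale sequence exists at level $c(t)$ and converges to a solution of \eqref{eq3} with parameter $t\rho$; Corollary~\ref{cmpt} then bounds this one-parameter family of genuine solutions uniformly, and sending $t\to1$ produces a critical point of $J_\rho$ at a level in $[a,b]$, a contradiction. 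This bypasses compactness of general Palais--Smale sequences entirely, and is the step where your write-up has a genuine gap. A second (related) issue in your plan: ruling out orbits that remain in the strip forever only gives pointwise finiteness of the hitting time $T(u_0)$; without a uniform lower bound on $\|DJ_\rho\|$ over $J_\rho^{-1}[a,b]$ (which is precisely what fails absent Palais--Smale) you have not shown $T$ is bounded or continuous, so the last sentence of your proof does not yet yield a continuous retraction.
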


\medskip

\subsection{The Moser-Trudinger inequality}

We introduce now the argument for proving the main result of this section, see Theorem \ref{th:m-t}.  We start by giving the following definition which will be then used in the sequel. We set
$$
 \mathcal B = \left\{ \rho=(\rho_1,\rho_2) \in [0,+\infty)^2 \,:\, \inf_{u\in H^1(M)} J_\rho(u) > -\infty  \right\}.
$$
Observe that $\mathcal B$ preserves a partial order in $[0,+\infty)^2$: more precisely, if $(\rho_1,\rho_2)\in\mathcal B$ then $(\rho_1',\rho_2')\in\mathcal B$ for $\rho_1'<\rho_1$ and $\rho_2'<\rho_2$. Moreover, by using standard scalar Moser-Trudinger inequalities for $u$ and $-2u$ respectively, it follows that $\mathcal B\neq \emptyset$.

Theorem \ref{th:m-t} can be then rephrased by asserting that
$$
	\mathcal B=[0,8\pi]\times[0,4\pi].
$$
Observe that we readily have $\mathcal B\subset[0,8\pi]\times[0,4\pi]$ by Proposition \ref{test-f}. We start now by proving a partial result which will be then used in the proof of the sharp result.
\begin{proposition} \label{prop:m-t}
It holds that $[0,8\pi)\times[0,4\pi)\subset \mathcal B$.
\end{proposition}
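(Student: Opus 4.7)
The plan is a contradiction argument that combines the blow-up alternative of Theorem \ref{th:blow-up} with the mass quantization of Theorem \ref{th:quantization}, following the Ding scheme \cite{ding} as adapted to systems in \cite{bat-mal, jost-wang, os}. Note that a naive application of the scalar Moser--Trudinger inequality \eqref{ineq} separately to $u-\bar u$ and to $-2(u-\bar u)$ only yields
\[
 J_\rho(u) \,\geq\, \left(\tfrac{1}{2} - \tfrac{\rho_1}{16\pi} - \tfrac{\rho_2}{8\pi}\right) \int_M |\nabla u|^2 \,dV_g - C,
\]
hence $\{\rho_1+2\rho_2\leq 8\pi\}\subset\mathcal B$. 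This shows that $\mathcal B$ contains a neighborhood of the origin, but is strictly weaker than the claim, reflecting the fact that the contributions of $e^u$ and $e^{-2u}$ to $J_\rho$ cannot be separated linearly.

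To access the sharper regime I will exploit that $f(\rho):=\inf_u J_\rho(u)$ is concave in $\rho$ (as the infimum of linear functions) and that $\mathcal B$ respects the componentwise order on $[0,\infty)^2$. If the proposition failed, along the segment joining the origin to an offending point in $[0,8\pi)\times[0,4\pi)\setminus\mathcal B$ I could produce a threshold $\bar\rho=(\bar\rho_1,\bar\rho_2)$ with $\bar\rho_1<8\pi$, $\bar\rho_2<4\pi$, such that $\lambda\bar\rho\in\mathcal B$ for every $\lambda<1$. For each such $\lambda$ I aim to manufacture a genuine critical point $u_\lambda$ of $J_{\lambda\bar\rho}$, namely a solution of \eqref{eq3} at parameter $\lambda\bar\rho$: this is where the ``modified functional'' advertised at the start of the section enters, namely one regularizes $J_{\lambda\bar\rho}$ by a small coercive correction of the form $\varepsilon\int_M(u-\bar u)^2\,dV_g$, minimizes the resulting functional, and passes to the limit $\varepsilon\to 0$ using the a priori bound $f(\lambda\bar\rho)>-\infty$ to prevent uncontrolled escape.

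I then feed a sequence $u_n=u_{\lambda_n}$ with $\lambda_n\nearrow 1$ into Theorem \ref{th:blow-up}. If $u_n$ remains uniformly bounded in $L^\infty(M)$, it converges up to a subsequence in $C^{2,\alpha}$ to a classical solution $u^\infty$ of \eqref{eq3} at $\bar\rho$; since the $u_n$ are essentially minimizers, so is $u^\infty$, giving $f(\bar\rho)=J_{\bar\rho}(u^\infty)>-\infty$. A standard perturbation around $u^\infty$ extends $\mathcal B$ to a neighborhood of $\bar\rho$, contradicting the choice of threshold. Otherwise blow-up occurs; the quantization theorem then forces $m_1(p)\in 8\pi\N$ and $m_2(p)\in 4\pi\N$, and passing to the limit in the normalization $\rho_{i,n}=\rho_{i,n}\int_M h_i e^{u_{i,n}}\,dV_g$ delivers
\[
 \bar\rho_i \,=\, \int_M r_i\,dV_g + \sum_{p\in S_i} m_i(p), \qquad i=1,2.
\]
As $S_1\cup S_2\neq\emptyset$ in the blow-up scenario, at least one of $\bar\rho_1\geq 8\pi$ or $\bar\rho_2\geq 4\pi$ must hold, contradicting $\bar\rho_1<8\pi$ and $\bar\rho_2<4\pi$.

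The step I expect to require most care is the production of the solutions $u_n$ from the bare boundedness-below property $\lambda_n\bar\rho\in\mathcal B$: coercivity is not automatic here, so one really does need the regularization trick (or a tailored Palais--Smale/Ekeland analysis adapted to the concentration of $e^u$ and $e^{-2u}$) to cross the gap between $f(\lambda_n\bar\rho)>-\infty$ and the existence of honest critical points. Once the $u_n$ are in hand, the remaining steps are essentially routine applications of the blow-up alternative and of the quantization result already proved in Section \ref{sec:quantization}.
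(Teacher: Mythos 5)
The overall contradiction scheme you adopt (blow-up alternative plus quantization) is indeed the one used in the paper, but the specific mechanism you propose has two concrete gaps, and they are exactly where the paper's key technical device --- the Ding function $F$ of \eqref{F}--\eqref{perturb} --- does its work.

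First, the regularization. You suggest adding $\varepsilon\int_M (u-\ov u)^2\,dV_g$ to $J_{\lambda\bar\rho}$, minimizing, and letting $\varepsilon\to 0$. This has two problems: (i) the Euler--Lagrange equation of the $\varepsilon$-modified functional contains the extra term $2\varepsilon(u-\ov u)$, so its critical points do \emph{not} solve \eqref{eq3}, and Theorem~\ref{th:blow-up} cannot be invoked directly for them; (ii) the passage $\varepsilon\to 0$ requires an a priori $H^1$ bound on $u_\varepsilon$, but $f(\lambda\bar\rho)>-\infty$ only bounds the functional's value, not $\int_M|\n u_\varepsilon|^2$, so the limit cannot be taken without further compactness. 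The paper circumvents both issues at once: the perturbation $F\left(\frac12\int_M|\n u|^2\,dV_g\right)$ is a function of the Dirichlet energy alone, so the resulting minimizer still satisfies equation \eqref{eq3}, merely with the rescaled parameter $\wtilde\rho_i=\rho_i/(1-\mu(u))$ of \eqref{parameter}. (Incidentally, the first part of Lemma~\ref{l:set-B} already produces genuine minimizers at interior parameters without any extra regularization, so your $L^2$ device is not needed at all.)

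Second, and more seriously, the bounded case. If $u_n$ remains bounded you deduce that the threshold $\bar\rho$ belongs to $\mathcal B$, and then assert that a ``standard perturbation'' extends $\mathcal B$ to a neighborhood of $\bar\rho$. This step is not justified: possessing a minimizer at $\bar\rho$ gives $f(\bar\rho)>-\infty$ but says nothing about $\inf J_\rho$ for $\rho$ slightly larger than $\bar\rho$ componentwise, so it does not contradict $\bar\rho\in\p\mathcal B$. The whole point of the Ding function is to \emph{exclude} this case by design: $F$ is chosen via the second part of Lemma~\ref{l:set-B} so that $\inf_{H^1(M)}\wtilde J_{\bar\rho}=-\infty$ (see \eqref{inf}), hence no limit minimizer of $\wtilde J_{\bar\rho}$ can exist, and one is forced into the blow-up branch. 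There, $F'\to 0$ guarantees $\wtilde\rho\to\bar\rho<(8\pi,4\pi)$, which the quantization of Theorem~\ref{th:blow-up} forbids. Without the $F$ construction (or an alternative argument showing that the bounded scenario is impossible) your proof has a genuine hole.
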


In order to prove the latter result we start by analyzing what happens in $\overset{\circ}{\mathcal B}$ and on $\p \mathcal B$.
\begin{lem} \label{l:set-B}
Let $\rho=(\rho_1,\rho_2)\in \overset{\circ}{\mathcal B}$, then $J_\rho$ has a minimizer $u\in H^1(M)$ which solves \eqref{eq3}. If instead $\rho \in \p\mathcal B$, there exists a sequence $\{ u_k \}_k\subset H^1(M)$ such that
$$
	\lim_{k\to+\infty} \int_M |\n u_k|^2 \,dV_g = +\infty, \qquad \lim_{k\to+\infty} \frac{J_\rho(u_k)}{\int_M |\n u_k|^2 \,dV_g} 	\leq 0.
$$
\end{lem}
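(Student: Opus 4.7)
My plan for (i) is to establish coercivity of $J_\rho$ on $\{\bar u = 0\}$ by a convex-combination trick and then apply the direct method. Since $\rho\in \overset{\circ}{\mathcal B}$, a neighborhood of $\rho$ lies in $\mathcal B$, so I can pick $\lambda\in(0,1)$ with $\rho/\lambda\in \mathcal B$. A direct manipulation of \eqref{functional} yields the key identity
\begin{equation*}
 J_\rho(u) = \lambda\, J_{\rho/\lambda}(u) + \frac{1-\lambda}{2}\int_M |\nabla u|^2\,dV_g,
\end{equation*}
so $J_\rho(u) \geq \frac{1-\lambda}{2}\|\nabla u\|_2^2 - C$. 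Combined with the invariance $J_\rho(u+c)=J_\rho(u)$ (immediate from \eqref{functional}), which licenses the normalization $\bar u=0$, this is the required coercivity on $\{\bar u=0\}$.

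Next I would take a minimizing sequence $u_k$ with $\bar u_k=0$; the coercivity yields boundedness in $H^1(M)$, so up to a subsequence $u_k \rightharpoonup u_*$ weakly in $H^1$, strongly in every $L^p$, and a.e. The gradient term is weakly lower semicontinuous, while the two log terms are in fact continuous along $u_k$: by the standard scalar Moser--Trudinger inequality applied to $\pm\beta u_k$, $\int_M e^{\beta u_k}\,dV_g$ is uniformly bounded for any $\beta\in\R$, so $h_1 e^{u_k}$ and $h_2 e^{-2u_k}$ are equi-integrable and Vitali's theorem gives $L^1$--convergence to $h_1 e^{u_*}$ and $h_2 e^{-2u_*}$; strong $L^1$--convergence of $u_k$ handles the linear terms. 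Therefore $J_\rho(u_*) = \inf_{H^1(M)} J_\rho$, and the Euler--Lagrange equation of $J_\rho$ at $u_*$ is precisely \eqref{eq3}.

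For (ii) the plan is a contradiction argument exploiting the down-set structure of $\mathcal B$. Since $\rho\in\partial \mathcal B$, pick $\rho_n\to\rho$ with $\rho_n\notin\mathcal B$; for every $\epsilon>0$ we have $\rho_n < \rho + (\epsilon,\epsilon)$ coordinatewise for $n$ large, so the contrapositive of the down-set property forces $\rho_\epsilon := \rho+(\epsilon,\epsilon)\notin \mathcal B$ for all $\epsilon>0$. Define $\alpha := \inf\bigl\{\liminf_{k\to\infty} J_\rho(u_k)/\|\nabla u_k\|_2^2 : \|\nabla u_k\|_2\to\infty\bigr\}$; I must show $\alpha\leq 0$. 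Assume for contradiction $\alpha>0$, so on $\{\bar u=0,\ \|\nabla u\|_2^2\geq L_0\}$ one has $J_\rho(u)\geq (\alpha/2)\|\nabla u\|_2^2$. Combining this with the standard scalar Moser--Trudinger bounds (valid for $\bar u=0$)
\begin{equation*}
 \log\int_M h_1 e^u\,dV_g \leq \frac{1}{16\pi}\|\nabla u\|_2^2 + C, \qquad \log\int_M h_2 e^{-2u}\,dV_g \leq \frac{1}{4\pi}\|\nabla u\|_2^2 + C,
\end{equation*}
and the identity $J_{\rho_\epsilon}(u) = J_\rho(u) - \epsilon\log\int_M h_1 e^u\,dV_g - \tfrac{\epsilon}{2}\log\int_M h_2 e^{-2u}\,dV_g$ on $\{\bar u = 0\}$, I obtain
\begin{equation*}
 J_{\rho_\epsilon}(u) \geq \Bigl(\frac{\alpha}{2} - \frac{3\epsilon}{16\pi}\Bigr)\|\nabla u\|_2^2 - C_\epsilon \quad \text{on } \{\bar u=0,\ \|\nabla u\|_2^2\geq L_0\},
\end{equation*}
and $J_{\rho_\epsilon}$ is already bounded below on $\{\bar u=0,\ \|\nabla u\|_2^2\leq L_0\}$ directly by Moser--Trudinger. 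Choosing $\epsilon<8\pi\alpha/3$ would then give $\rho_\epsilon\in\mathcal B$, contradicting the preceding step. Hence $\alpha\leq 0$, and a sequence realizing the liminf is the required $\{u_k\}$.

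The main obstacle in (i) is the passage to the limit in the two log terms under weak $H^1$ convergence, which forces the use of Moser--Trudinger to secure equi-integrability of $e^{u_k}$ and $e^{-2u_k}$. In (ii) the delicate step is the down-set contrapositive, which converts boundary membership of $\rho$ into non-membership of every $\rho+(\epsilon,\epsilon)$; I must also be careful to use only the scalar Moser--Trudinger inequality, not the sharp bilinear inequality of Theorem~\ref{th:m-t}, whose proof via Proposition~\ref{prop:m-t} is precisely what this lemma feeds into.
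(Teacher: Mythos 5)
Your proof is correct and follows the same overall strategy as the paper's: for (i), use the down-set structure of $\mathcal B$ to produce a coercivity bound for $J_\rho$ on $\{\bar u = 0\}$ and minimize directly; for (ii), argue by contradiction, show the contradiction hypothesis makes a strictly larger parameter coercive, and conclude the larger parameter is in $\mathcal B$. The genuine difference is the perturbation in (ii). The paper perturbs \emph{multiplicatively}: from $J_\rho(u)\geq \tfrac{\e}{2}\int_M|\n u|^2\,dV_g - C$ one writes $J_{(1+\d)\rho}(u) = (1+\d)J_\rho(u) - \tfrac{\d}{2}\int_M|\n u|^2\,dV_g$ and chooses $\d$ small; this is a purely algebraic identity in the same spirit as the convex-combination trick you use in (i), requiring no auxiliary inequality, and the conclusion $(1+\d)\rho\in\mathcal B$ contradicts $\rho\in\p\mathcal B$ immediately. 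You perturb \emph{additively} to $\rho+(\e,\e)$, which requires bounding the two extra log terms by scalar Moser--Trudinger and the separate (correct) observation that $\rho+(\e,\e)\notin\mathcal B$ for every $\e>0$ when $\rho\in\p\mathcal B$. Both are valid; the paper's route is slicker since it needs nothing beyond the linear-in-$\rho$ structure, whereas yours is slightly longer but has the merit of making explicit which inequality is invoked (only the scalar one, avoiding circularity with Theorem~\ref{th:m-t}, as you rightly flag). Your fuller treatment of (i)---equi-integrability via Moser--Trudinger and Vitali's theorem to pass to the limit in the log terms---supplies details that the paper compresses into ``coercive and weakly lower-semicontinuous.''
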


\begin{proof}
For the first part one has just to take $\d>0$ sufficiently small so that $(1+\d)\rho\in \mathcal B$ and to notice that
$$
	J_\rho (u) = \frac{\d}{2(1+\d)} \int_M |\n u|^2 \,dV_g + \frac{J_{(1+\d)\rho}(u)}{1+\d} \geq \frac{\d}{2(1+\d)} \int_M |\n u|^2 \,dV_g - C.
$$
Restricting ourselves to the zero average functions, the functional is coercive and weakly lower-semicontinuous, so we can minimize it.

\medskip

Concerning the second part, suppose by contradiction that for any $\{u_k\}_k$ with $\int_M |\n u_k|^2 \,dV_g \to +\infty$ we would have
$$
	\frac{J_\rho(u_k)}{\int_M |\n u_k|^2 \,dV_g} \geq \e > 0.
$$
This implies that $J_\rho (u) \geq \frac \e2 \int_M |\n u|^2 \,dV_g -C$ and we deduce that for $\d>0$ sufficiently small
$$
	J_{(1+\d)\rho} (u) = (1+\d)J_\rho(u) - \d \int_M |\n u|^2 \,dV_g \geq \wtilde \e \int_M |\n u|^2 \,dV_g - C \geq -C,
$$
so we conclude that $(1+\d)\rho \in \mathcal B$ which contradicts the assumption that $\rho \in \p\mathcal B$.
\end{proof}

\medskip

In proving Proposition \ref{prop:m-t} our aim is to exploit the blow-up analysis in Theorem \ref{th:blow-up}. To this end we perturb our functional $J_\rho$ to \emph{force} it to exhibit blow-up. We start by stating the following fact which can be found in \cite{ding,jost-wang}: for any two sequences $\{a_k\}_k$ and $\{b_k\}_k$ satisfying
$$
	\lim_{k\to+\infty} a_k = +\infty, \qquad \lim_{k\to+\infty} \frac{b_k}{a_k} 	\leq 0,
$$
there exists a smooth function $F:[0,+\infty)\to\R$ such that
\begin{equation} \label{F}
	F'(t)\in (0,1), \qquad \lim_{t\to+\infty} F'(t)=0, \qquad \lim_{k\to+\infty} \bigr(b_{n_k} - F(a_{n_k})\bigr) = -\infty,
\end{equation}
for some subsequence $\{n_k\}_k$. We apply the latter result to
\begin{equation} \label{choice}
a_k = \frac 12 \int_M |\n u_k|^2 \,dV_g \qquad \mbox{and} \qquad  b_k=J_\rho(u_k),
\end{equation}
where $\{u_k\}_k$ is the sequence found in Lemma \ref{l:set-B} and we define the perturbed functional by
\begin{equation} \label{perturb}
	\wtilde J_\rho (u) = J_\rho(u)- F\left( \frac 12 \int_M |\n u|^2 \,dV_g \right).
\end{equation}
We point out that $\wtilde J_\rho$ is defined in such a way that for $\rho\in\overset{\circ}{\mathcal B}$ it has a minimizer $u\in H^1(M)$ which solves equation \eqref{eq3} with
\begin{equation}\label{parameter}
	\wtilde\rho_i = \frac{1}{1-\mu(u)}\,\rho_i, \qquad \mu(u) = F'\left( \frac 12 \int_M |\n u|^2 \,dV_g \right).
\end{equation}
Indeed, it is possible to argue as in Lemma \ref{l:set-B} by exploiting the properties of the function $F$. Moreover, for $\rho\in\p\mathcal B$ one can use the sequence $\{u_k\}_k$ obtained in Lemma \ref{l:set-B} and the choice of $a_k, b_k$ in \eqref{choice} to deduce
\begin{equation} \label{inf}
	\inf_{u\in H^1(M)} \wtilde J_\rho (u) = -\infty.
\end{equation}
We are now in a position to prove Proposition \ref{prop:m-t}.

\medskip

\no \emph{Proof of Proposition \ref{prop:m-t}.}
We argue by contradiction. Suppose the thesis is false, then there exists $\ov\rho=(\ov\rho_1,\ov\rho_2)\in\p\mathcal B$ with $\ov\rho_1 < 8\pi$ and $\ov\rho_2 < 4\pi$. Take $\rho_k \in \overset{\circ}{\mathcal B}$ with $\rho_k \to \ov\rho$ and let $\{u_k\}_k$ be the associated minimizers of $\wtilde J_{\rho_k}$ defined in \eqref{perturb} satisfying equation \eqref{eq3} with parameter $\wtilde\rho$ given by \eqref{parameter}, see the argument above. We may suppose to work with zero average functions.

\medskip

Suppose first that
$$
\frac 12 \int_M |\n u_k|^2 \,dV_g \leq C,
$$
for some $C$ independent of $k$. Then, by Theorem \ref{th:blow-up} we would get the sequence $\{u_k\}_k$ admits a limit $u\in H^1(M)$ which is a minimizer of $\wtilde J_{\ov\rho}$. This is not possible by construction, see \eqref{inf}.

\medskip

We deduce that the sequence $\{u_k\}_k$ has to blow-up, as anticipated before. Recall that $\{u_k\}_k$ satisfy equation \eqref{eq3} with parameters $\wtilde\rho$ given by \eqref{parameter}. By construction, see \eqref{F}, we get $\mu(u_k)\to 0$ and hence $\wtilde \rho \to \ov\rho < (8\pi,4\pi)$. This contradicts the necessary condition for a blowing-up solution given by Theorem~\ref{th:blow-up}, see also Corollary \ref{cmpt}. The proof is concluded.
\begin{flushright}
$\square$
\end{flushright}

\subsection{The sharp inequality}

We are going to prove here the sharp inequality, namely Theorem \ref{th:m-t}. We start by pointing out a version of the standard Moser-Trudinger inequality in \eqref{ineq} on bounded domains with Dirichlet boundary condition: let $\Omega \subset \R^2$ be a bounded domain, then for any $v\in H^1_0(\Omega)$ it holds
\begin{equation} \label{ineq-dom}
		8\pi \log \int_\Omega e^{v} \,dV_g \leq \frac 12 \int_{\Omega} |\n v|^2 \,dx + C_\Omega.
\end{equation}
We will need both the inequality in \eqref{ineq} and a \emph{localized} version of it around a blow-up point, see the following result.
\begin{lem} \label{l:loc}
Let $p\in M$ be a blow-up point of $\{u_k\}_k$. Then, for all $\d>0$ small there exists $C_\d>0$ such that
$$
	8\pi \log \int_M e^{u_k-\ov u_k} \,dV_g \leq \frac 12 \int_{B_\d(p)} |\n u_k|^2 \,dV_g + C_\d.
$$
\end{lem}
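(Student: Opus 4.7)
The plan is to localize the Moser-Trudinger inequality to $B_\d(p)$ via a Dirichlet decomposition and then upgrade the resulting local bound to a global one using the mass-concentration picture from Theorem~\ref{th:blow-up}. Set $v_k := u_k - \ov u_k$, so $v_k$ has zero mean and $\n v_k = \n u_k$; the target becomes
\[
8\pi \log \int_M e^{v_k} \, dV_g \leq \frac{1}{2} \int_{B_\d(p)} |\n v_k|^2 \, dV_g + C_\d.
\]

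Choose $\d$ small enough that $\ov{B_\d(p)} \cap S = \{p\}$, where $S$ is the finite blow-up set from Theorem~\ref{th:blow-up}. On $B_\d(p)$ I would write $v_k = w_k + \psi_k$, where $w_k \in H_0^1(B_\d(p))$ solves $\D w_k = \D u_k$ in $B_\d(p)$ with zero boundary datum and $\psi_k$ is the harmonic extension of $v_k|_{\p B_\d(p)}$. Integration by parts gives $\int_{B_\d(p)} \n w_k \cdot \n \psi_k \, dV_g = 0$, hence the orthogonality
\[
\int_{B_\d(p)} |\n w_k|^2 \, dV_g \leq \int_{B_\d(p)} |\n v_k|^2 \, dV_g.
\]
Since $\p B_\d(p)$ sits in a compact subset of $M \setminus S$, Theorem~\ref{th:blow-up} together with standard elliptic regularity yields a $k$-uniform bound $\|v_k\|_{L^\infty(\p B_\d(p))} \leq C_\d$, and the maximum principle then forces $\|\psi_k\|_{L^\infty(B_\d(p))} \leq C_\d$. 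Applying the Dirichlet Moser-Trudinger inequality \eqref{ineq-dom} to $w_k$ and combining with $v_k \leq w_k + C_\d$ in $B_\d(p)$ produces
\[
8\pi \log \int_{B_\d(p)} e^{v_k} \, dV_g \leq \frac{1}{2} \int_{B_\d(p)} |\n v_k|^2 \, dV_g + C_\d.
\]

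To pass from this local estimate to a global one I would show
\[
\int_M e^{v_k} \, dV_g \leq C_\d \int_{B_\d(p)} e^{v_k} \, dV_g + C_\d.
\]
Away from $S$, Theorem~\ref{th:blow-up} provides a uniform $L^\infty_{\mathrm{loc}}$ bound on $v_k$, so $\int_{M \setminus \bigcup_{q\in S} B_\d(q)} e^{v_k} \, dV_g = O_\d(1)$. At each $q \in S_1$, the mass quantization $m_1(q) \in 8\pi\N$ together with the identity $\int_M h_1 e^{u_{1,k}} \, dV_g = 1$ pins the local mass $\int_{B_\d(q)} e^{v_k} \, dV_g$ to a bounded multiple of $\int_{B_\d(p)} e^{v_k} \, dV_g$ (the common normalization $\int_M h_1 e^{u_k} \, dV_g$ cancels in the ratio), and summing over the finite blow-up set yields the bound. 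Taking logs and combining with the previous display closes the proof. The main obstacle is this last step: one must verify that $p$ carries a \emph{definite positive fraction} of the total mass $\int_M e^{v_k} \, dV_g$, which is precisely where the quantization in Theorem~\ref{th:blow-up} is essential; the rest is a standard cut-off/harmonic-extension reduction to the Dirichlet Moser-Trudinger inequality on a disk.
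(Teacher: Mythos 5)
Your proposal is correct and follows the same overall structure as the paper's proof: decompose $u_k$ on $B_\d(p)$ into a Dirichlet-zero piece plus a harmonic extension of the boundary data, control the harmonic piece uniformly in $k$ by elliptic estimates away from the blow-up set, apply the Dirichlet Moser--Trudinger inequality \eqref{ineq-dom} to the Dirichlet-zero piece, and then use the mass concentration from Theorem~\ref{th:blow-up} to show that $\int_{B_\d(p)} e^{u_k}\,dV_g$ captures a definite positive fraction of $\int_M e^{u_k}\,dV_g$, which converts the local estimate into the global one. These are exactly the paper's steps.

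One small but genuine improvement in your version: to compare $\int_{B_\d(p)}|\n w_k|^2$ with $\int_{B_\d(p)}|\n u_k|^2$, you invoke the $H^1$-orthogonality of the Dirichlet-zero piece $w_k\in H^1_0(B_\d(p))$ against the harmonic piece $\psi_k$, getting $\int_{B_\d(p)}|\n w_k|^2 \leq \int_{B_\d(p)}|\n u_k|^2$ directly. The paper instead expands $\int|\n(u_k-v_k)|^2$ and estimates the cross term $\int\n u_k\cdot\n v_k$ by $\|\n v_k\|_{L^\infty}\int_{B_\d(p)}|\n u_k|$, which tacitly uses a $k$-uniform $L^1$ bound on $\n u_k$ over $B_\d(p)$ -- true (it follows from the $W^{1,q}$ convergence, $q<2$), but not spelled out. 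Your orthogonality observation sidesteps that extra input and is cleaner. The final mass-fraction step is stated more carefully in your version (you explicitly account for the possibility of several blow-up points and invoke the quantization $m_1(p)\geq 8\pi$ to pin down a positive mass fraction at $p$), whereas the paper simply cites $e^{u_k}/\int_M e^{u_k}\rightharpoonup\d_p$; both reach the same $\theta$-concentration bound.
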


\begin{proof}
We start by taking $\d>0$ small such that $B_\d(p)$ contains no other blow-up point. Furthermore, we may suppose that in $B_\d(p)$ we have a flat metric and $\ov u_k =0$. In order to use the inequality in \eqref{ineq-dom} we modify the function in the following way: let $v_k$ be the solution of
$$
\left\{
\begin{array}{rll}
-\D v_k =& 0 & \mbox{in } B_\d(p), \\
v_k =& u_k  & \mbox{on } \p B_\d(p).
\end{array}
\right.
$$
The latter auxiliary function is bounded: indeed, by elliptic estimates and by the estimates of $u_k$ outside the blow-up set, see for example Sections \ref{sec:quantization}, \ref{sec:boundary}, we get
$$
	\| v_k \|_{C^1(B_\d(p))} \leq C \| v_k \|_{L^{\infty}(B_\d(p))} \leq \| u_k \|_{L^{\infty}(\p B_\d(p))} \leq C,
$$
for some $C>0$ independent on $k$. We then set $\wtilde u_k = u_k - v_k$ so that $\wtilde u_k\in H^1_0(B_\d(p))$. Applying the Moser-Trudinger inequality for bounded domains with Dirichlet boundary condition \eqref{ineq-dom} we deduce
\begin{equation} \label{dom}
	8\pi \log \int_{B_\d(p)} e^{\wtilde u_k} \,dV_g \leq \frac 12 \int_{B_\d(p)} |\n \wtilde u_k|^2 \,dV_g + C_\d.
\end{equation}
By the same estimates on $v_k, u_k$ we first observe that the gradient terms of $\wtilde u_k$ and $u_k$ are different by an $O(1)$ term:
\begin{align}
	\int_{B_\d(p)} |\n \wtilde u_k|^2 \,dV_g &= \int_{B_\d(p)} |\n u_k|^2 \,dV_g + \int_{B_\d(p)} |\n v_k|^2 \,dV_g - 2\int_{B_\d(p)} \n u_k\cdot \n v_k \,dV_g \nonumber\\
	& \leq \int_{B_\d(p)} |\n u_k|^2 \,dV_g + \int_{B_\d(p)} |\n v_k|^2 \,dV_g + 2\|\n v_k\|_{L^{\infty}(B_\d(p))} \int_{B_\d(p)} |\n u_k| \,dV_g \nonumber\\
	& \leq \int_{B_\d(p)} |\n u_k|^2 \,dV_g + C. \label{dom1}
\end{align}
Concerning the nonlinear term, by the estimates on $v_k$ we have
$$
	\int_{B_\d(p)} e^{\wtilde u_k} \,dV_g = \int_{B_\d(p)} e^{u_k-v_k} \,dV_g \geq C \int_{B_\d(p)} e^{u_k} \,dV_g \geq C \theta \int_{M} e^{u_k} \,dV_g,
$$
where in the last inequality we used the fact that $u_k$ blows-up at $p$ and hence $\frac{e^{u_k}}{\int_M e^{u_k}\,dV_g} \rightharpoonup \d_p$, see for example Theorem \ref{th:blow-up}. Therefore, for $k$ big enough $\int_{B_\d(p)} e^{u_k} \,dV_g \geq \theta \int_{M} e^{u_k} \,dV_g$, for some $0<\theta<1$. It follows that
\begin{equation} \label{dom2}
\log \int_{B_\d(p)} e^{\wtilde u_k} \,dV_g \geq \log\int_{M} e^{u_k} \,dV_g - C_\theta.
\end{equation}
Inserting \eqref{dom1} and \eqref{dom2} into \eqref{dom} we get the thesis.
\end{proof}

\medskip

We proceed now with the proof of Theorem \ref{th:m-t}.

\medskip

\no \emph{Proof of Theorem \ref{th:m-t}.}
We have to show that $\mathcal B=[0,8\pi]\times[0,4\pi]$. To do this we take $\rho_k\in [0,8\pi)\times[0,4\pi)$ with $\rho_k \to (8\pi,4\pi)$ and we prove $\inf_{H^1(M)} J_{\rho_k} \geq -C$, for some $C>0$ independent on $k$. Since by Proposition \ref{prop:m-t} we already know $J_{\rho_k}$ has a minimizer $u_k$, if we show $J_{\rho_k}(u_k)\geq -C$ then the thesis follows. We may assume $\ov u_k = 0$.

\medskip

If the sequence $\{u_k\}_k$ does not blow-up, Theorem \ref{th:blow-up} yields it converges to a minimizer of $J_{(8\pi,4\pi)}$ and we are done. Hence, suppose the sequence of minimizers does blow-up: more precisely either $u_k$ or $-2u_k$ blow-up (or both). If both components blow-up at the same point, then the Pohozaev identity \eqref{2.3} holds true. The values of $\rho_k=(\rho_{1,k},\rho_{2,k})$ can not satisfy the latter identity so this situation can not happen.

\medskip

We are left with the following alternative: either $u_k$ blows-up at a point $p\in M$ and $-2u_k$ stays bounded (or vice versa) or $u_k$ and $-2u_k$ blow-up at different points $p_1,p_2\in M$. Suppose the first situation occurs. Since $\ov u_k = 0$ and $-2u_k$ is bounded we have
\begin{align*}
	J_{\rho_k}(u_k) &= \frac 12 \int_M |\n u_k|^2 \,dV_g - \rho_{1,k} \log \int_M e^{u_k} \,dV_g - \frac{\rho_{2,k}}{2} \log \int_M e^{-2u_k} \,dV_g \\
	 & \geq  \frac 12 \int_M |\n u_k|^2 \,dV_g - \rho_{1,k} \log \int_M e^{u_k} \,dV_g - C.
\end{align*}
By the fact that $\rho_{1,k}\to 8\pi$ we exploit the standard Moser-Trudinger inequality in \eqref{ineq} to assert that $J_{\rho_k}(u_k)>-C$, for some $C>0$ independent on $k$, which is the desired property.

\medskip

Suppose now both $u_k$ and $-2u_k$ blow-up at different points $p_1,p_2\in M$. For $\d>0$ sufficiently small we have
\begin{align*}
	J_{\rho_k}(u_k) &\!=\! \frac 12 \int_M |\n u_k|^2 \,dV_g - \rho_{1,k} \log \int_M e^{u_k} \,dV_g - \frac{\rho_{2,k}}{2} \log \int_M e^{-2u_k} \,dV_g \\
	 & \!\geq\!  \left(\frac 12 \int_{B_\d(p_1)} \!\!\! |\n u_k|^2 \,dV_g - \rho_{1,k} \log \int_M e^{u_k} \,dV_g\right)\! + \! \left(\frac 12 \int_{B_\d(p_2)} \!\!\! |\n u_k|^2 \,dV_g - \frac{\rho_{2,k}}{2} \log \int_M e^{-2u_k} \,dV_g\right)\!.
\end{align*}
Then, we apply the local Moser-Trudinger inequality of Lemma \ref{l:loc} to both $u_k$ and $-2u_k$ around $p_1$ and $p_2$, respectively. One has just to observe that $(\rho_{1,k},\rho_{2,k})\to (8\pi,4\pi)$ and that the scaling involving the part with $-2u_k$ gives a sharp constant of $\rho_2=4\pi$ to conclude that $J_{\rho_k}(u_k)>-C$, for some $C>0$ independent on $k$. This concludes the proof of the main theorem.
\begin{flushright}
$\square$
\end{flushright}

\medskip

\section{A general existence result} \label{sec:existence}

\medskip

In this section we introduce the variational argument to prove the general existence result stated in Theorem \ref{th:existence}. The plan is the following: we start by getting an improved Moser-Trudinger inequality and by describing the topological set one should consider, next we construct the test functions modeled on the latter set and finally we prove the existence of solutions via a topological argument. We mainly follow the argument in \cite{bjmr}: when we will be sketchy we refer to the latter paper for the full details.

\medskip

\subsection{Improved Moser-Trudinger inequality and the topological join} \label{subs:m-t}

In this subsection we obtain an improved Moser-Trudinger inequality \eqref{m-t} and we show how can be used it in the study of the low sublevels of the functional $J_\rho$. More precisely, if $e^u$ and $e^{-2u}$ are spread in different regions over the surface then the constants in \eqref{m-t} can be multiplied by some positive integers. This in turn implies that in the very negative sublevels of $J_\rho$, $e^u$ or $e^{-2u}$ have to concentrate around a finite number of points. In this way one can map these configurations onto the topological join as discussed in the Introduction.

\medskip

Before giving the improved inequality we state a simple lemma the proof of which can be found for example in \cite{bjmr}.
\begin{lem}\label{l:step1}
Let $\dis{\d>0}$, $\dis{\th>0}$, $\dis{k,l\in\N}$ with $\dis{k\ge l}$, $\dis{f_i\in L^1(M)}$ be non-negative functions with $\dis{\|f_i\|_{L^1(M)}=1}$ for $\dis{i=1,2}$ and $\dis{\{\O_{1,i},\O_{2,j}\}_{i\in\{1,\dots,k\},j\in\{1,\dots,l\}}\subset M}$ such that
$$d(\O_{1,i},\O_{1,i'})\ge\d, \quad d(\O_{2,j},\O_{2,j'}) \ge\d,\qquad\forall\;i, \ i'\in\{1,\dots,k\}\hbox{ with }i\ne i' \;, \forall\;j,\ j' \in\{1,\dots,l\}\hbox{ with }j\ne j'$$
and
$$\int_{\O_{1,i}}f_1\,dV_g\ge\th, \; \int_{\O_{2,j}}f_2\,dV_g\ge\th, \qquad\forall\;i\in\{0,\dots,k\}, \; \forall\;j\in\{0,\dots,l\}.$$
Then, there exist $\dis{\ov\d>0,\;\ov\th>0}$, independent of
$\dis{f_i}$, and $\dis{\{\O_n\}_{n=1}^k\subset M}$ such that
$$d(\O_n,\O_{n'})\ge\ov\d,\qquad\forall\;n,\ n'\in\{1,\dots,k\}\hbox{ with }n\ne n'$$
and
$$\int_{\O_n}f_1\,dV_g\ge\ov\th, \quad \int_{\O_m}f_2\,dV_g \ge\ov\th,\qquad\forall\;n\in\{1,\dots,k\}, \; \forall\;m\in\{1,\dots,l\}.$$
\end{lem}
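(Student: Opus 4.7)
We propose the following plan. The lemma is purely combinatorial-geometric; no analytic information on $f_1,f_2$ beyond the given mass bounds is used. The strategy is to construct the new family $\{\O_n\}_{n=1}^k$ by matching $l$ of the sets $\O_{1,i}$ in a one-to-one way with the $l$ sets $\O_{2,j}$, merging each matched pair into a single set carrying mass of both $f_1$ and $f_2$, and using the remaining $k-l$ unmatched $\O_{1,i}$'s for the other $\O_n$'s.

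Since $k\ge l$, an injective matching $j\mapsto i(j)$ from $\{1,\dots,l\}$ into $\{1,\dots,k\}$ always exists. We fix one that minimizes the total cost $\sum_{j=1}^l d(\O_{1,i(j)},\O_{2,j})$; the minimality will later be used through a swap argument to bound the cross distances $d(\O_{1,i(j)},\O_{2,j'})$ with $j\neq j'$ from below. Next, pick a small parameter $\eta=\eta(\d,M)<\d/4$. For $m\in\{1,\dots,l\}$ define $\O_m$ as the $\eta$-neighborhood of $\O_{1,i(m)}\cup\gamma_m\cup\O_{2,m}$, where $\gamma_m$ is a minimizing geodesic segment joining the two sets; for $m\in\{l+1,\dots,k\}$ let $\O_m$ be the $\eta$-neighborhood of the $(m-l)$-th unmatched $\O_{1,i}$. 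By construction $\O_{1,i(m)}\subset\O_m$ for every $m$ and $\O_{2,m}\subset\O_m$ for $m\leq l$, so the mass lower bounds $\int_{\O_m} f_1\geq\bar\th$ and $\int_{\O_m} f_2\geq\bar\th$ ($m\leq l$) both hold with $\bar\th=\th$ by simple set inclusion.

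The crux, and the main obstacle, is the uniform $\bar\d$-separation $d(\O_n,\O_{n'})\ge\bar\d>0$ with $\bar\d$ independent of $f_1,f_2$. Within the $\O_{1,i}$-family and within the $\O_{2,j}$-family the $\eta$-neighborhoods are still separated by at least $\d-2\eta>\d/2$, so the delicate case is the cross interaction between the thin tubes along $\gamma_m$ and unrelated sets. This is controlled by two ingredients: first, the minimality of the chosen matching, which via a local swap argument prevents any tube $\gamma_m$ from coming too close to an unrelated block (otherwise swapping indices would strictly decrease the total cost, a contradiction); second, the fact that $M$ is a compact $2$-manifold admitting at most $N=N(\d,M)$ pairwise $\d$-separated subsets, so the family is of bounded combinatorial complexity and the connecting geodesics can be rerouted to avoid the remaining obstacles. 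Putting these together yields a uniform $\bar\d=\bar\d(\d,M)>0$ independent of $f_1,f_2$ and concludes the construction; everything else follows directly from the matching setup.
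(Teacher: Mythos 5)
The main difficulty --- the uniform cross-separation --- is not actually solved by your scheme, because a globally cost-minimizing matching is the wrong object to extremize. Take, inside a single chart, $\O_{1,1}=\{0\}$, $\O_{1,2}=\{100\}$, $\O_{2,1}=\{-1\}$, $\O_{2,2}=\{\epsilon\}$ with $0<\epsilon\ll\d=1$. The injection of minimal total cost is $i(1)=1$, $i(2)=2$ (cost $1+(100-\epsilon)$, versus $101+\epsilon$ for the swap), yet then $\O_1\supset\O_{1,1}=\{0\}$ and $\O_2\supset\O_{2,2}=\{\epsilon\}$ are at distance $\epsilon$, unbounded below in terms of $\d$. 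Swapping here \emph{increases} the cost, so your swap argument gives no contradiction and does not bound $d(\O_{1,i(n)},\O_{2,n'})$ from below for $n\ne n'$. Separately, the geodesic tubes are superfluous: the sets $\O_n$ in the statement need not be connected, and the distance between disjoint unions is the minimum of the distances between pieces, so introducing the tubes only forces you into the unsubstantiated ``rerouting'' step.

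The matching must be chosen by \emph{proximity}, not by optimizing. Declare $\O_{1,i}$ and $\O_{2,j}$ linked when $d(\O_{1,i},\O_{2,j})<\d/4$, and take $\O_n$ to be the (possibly disconnected) union of the linked pair, using an unused $\O_{1,i}$ when the partner is missing or $n>l$. For the linking to be a partial injection each $\O_{1,i}$ must be linkable to at most one $\O_{2,j}$, and this fails for sets of large diameter; this is the second missing ingredient, a preliminary localization. Cover $M$ by $N=N(\d,M)$ balls of radius $\d/8$; by pigeonhole each $\O_{1,i}$ contains a subset $\O_{1,i}'$ of diameter $\le\d/4$ with $\int_{\O_{1,i}'}f_1\ge\th/N$, and likewise $\O_{2,j}'$. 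Passing to subsets preserves the $\d$-separation within each family; moreover a common $\O_{1,i}'$ of diameter $\le\d/4$ within distance $\d/4$ of both $\O_{2,j}'$ and $\O_{2,j'}'$ would force $d(\O_{2,j}',\O_{2,j'}')<3\d/4<\d$, so the linking relation is indeed a partial injection. Extending it to a full injection by assigning unused indices to the unlinked $j$'s (and to $n>l$), one checks directly that the four pairwise distances defining $d(\O_n,\O_{n'})$ are all $\ge\d/4$, so the lemma holds with $\bar\d=\d/4$ and $\bar\th=\th/N$; this is the argument invoked via \cite{bjmr}.
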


\medskip

The main result of this subsection is the following.
\begin{pro}\label{mt-impr}
Let $\dis{\d>0}$, $\dis{\th>0}$, $\dis{k,l\in\N}$ and
$\dis{\{\O_{1,i},\O
_{2,j}\}_{i\in\{1,\dots,k\},j\in\{1,\dots,l\}}\subset M}$ be
as in Lemma \ref{l:step1}. Then, for any $\dis{\e>0}$ there exists $\dis{C=C\left(\e,\d,\th,k,l,M\right)}$ such that if $u\in H^1(M)$ satisfies
\begin{align*}
\int_{\O_{1,i}} e^{u}\,dV_g\ge\th\int_M e^{u}\,dV_g,~\forall i\in\{1,\dots,k\},
\qquad \int_{\O_{2,j}} e^{-2u}\,dV_g\ge\th\int_M e^{-2u}\,dV_g,~\forall j\in\{1,\dots,l\},
\end{align*}
it follows that
$$8k\pi\log\int_M e^{u-\ov{u}}\,dV_g+\frac{4l\pi}{2}\log\int_M e^{-2(u-\ov u)}\,dV_g\leq \frac{1+\e}{2}\int_M |\n u|^2\,dV_g+C. $$
\end{pro}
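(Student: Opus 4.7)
My plan is to adapt the cutoff-based scheme of \cite{bjmr}, with the key new input being the sharp Tzitz\'eica inequality of Theorem \ref{th:m-t} invoked \emph{locally} on the regions carrying mass of both $e^u$ and $e^{-2u}$; this is what prevents the otherwise inevitable factor-of-two overcount of gradient energy.

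First I would assume without loss of generality $k\geq l$ (the opposite case is symmetric by exchanging the roles of $f_1$ and $f_2$) and apply Lemma \ref{l:step1} to $f_1=e^u/\int_M e^u\,dV_g$ and $f_2=e^{-2u}/\int_M e^{-2u}\,dV_g$. This furnishes constants $\ov\delta,\ov\theta>0$ depending only on $\delta,\theta,k,l,M$ and pairwise $\ov\delta$-separated sets $\Omega_1,\dots,\Omega_k\subset M$ on which $\int_{\Omega_n}f_1\,dV_g\geq\ov\theta$ for all $n\leq k$ and $\int_{\Omega_m}f_2\,dV_g\geq\ov\theta$ for all $m\leq l$. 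I would then fix smooth cutoffs $\phi_n:M\to[0,1]$ with $\phi_n\equiv 1$ on $\Omega_n$ and disjoint supports in $\ov\delta/2$-neighborhoods, and set $v_n=\phi_n(u-\ov u)$.

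Next, for $n\leq l$ I would apply Theorem \ref{th:m-t} to $v_n$ itself (the crucial step), obtaining
\[
8\pi\log\int_M e^{v_n-\ov{v}_n}\,dV_g+2\pi\log\int_M e^{-2(v_n-\ov{v}_n)}\,dV_g\leq\tfrac12\int_M|\n v_n|^2\,dV_g+C.
\]
For $n\in\{l+1,\dots,k\}$ there is no control on $e^{-2u}$ near $\Omega_n$, so I would instead apply the scalar inequality \eqref{ineq} to $v_n$. Since $\phi_n\equiv 1$ on $\Omega_n$, the concentration hypothesis yields $\int_M e^{\pm a v_n}\,dV_g\geq\ov\theta\int_M e^{\pm a(u-\ov u)}\,dV_g$ for the relevant exponents $a$, and rewriting the integrals in terms of $u$ produces, for $n\leq l$,
\[
8\pi\log\int_M e^{u-\ov u}\,dV_g+2\pi\log\int_M e^{-2(u-\ov u)}\,dV_g\leq 4\pi\ov{v}_n+\tfrac12\int_M|\n v_n|^2\,dV_g+C,
\]
(the coefficient $4\pi$ arising from $-8\pi\ov{v}_n+4\pi\ov{v}_n$), and the analogous bound with only the $e^u$-term and $8\pi\ov{v}_n$ for $n>l$.

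Summing all $k$ inequalities collapses the left-hand side to exactly $8k\pi\log\int_M e^{u-\ov u}\,dV_g+2l\pi\log\int_M e^{-2(u-\ov u)}\,dV_g$, while the gradient piece becomes $\tfrac12\sum_{n=1}^k\int_M|\n v_n|^2\,dV_g$; the averages $\ov{v}_n$ and the constants are absorbed by Young's inequality and Poincar\'e. The main obstacle is sharpness of the gradient bound: expanding $|\n v_n|^2=\phi_n^2|\n u|^2+2\phi_n(u-\ov u)\n u\cdot\n\phi_n+(u-\ov u)^2|\n\phi_n|^2$ and applying Young produces cross terms $C_\e\int_M(u-\ov u)^2|\n\phi_n|^2\,dV_g$ which, estimated crudely via Poincar\'e, give a bad multiple of $\int|\n u|^2$ that cannot be absorbed into $\tfrac\e2\int|\n u|^2$. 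The standard workaround (going back to Chen--Li) is to replace the smooth cutoff by a logarithmic profile over a thicker annulus, so that $\|\n\phi_n\|_{L^2(M)}^2$ is as small as desired while the separation $\ov\delta$ still keeps the supports disjoint; this yields $\sum_n\int_M|\n v_n|^2\,dV_g\leq(1+\e)\int_M|\n u|^2\,dV_g+C_\e$ and completes the proof.
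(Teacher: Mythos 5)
Your overall architecture matches the paper's: apply Lemma~\ref{l:step1} to obtain well-separated sets, introduce cutoffs, apply the Tzitz\'eica Moser--Trudinger inequality locally near $\O_n$ for $n\leq l$, the scalar inequality for $n>l$, then sum. The step that fails is the gradient bound. After expanding and applying Young, the error term is $\frac{1}{\e}\int_M(u-\ov u)^2|\n\phi_n|^2\,dV_g$, and you propose to tame it by switching to a logarithmic cutoff so that $\|\n\phi_n\|_{L^2(M)}^2$ becomes small. This does not suffice: the quantity is a \emph{weighted} $L^2$ norm of $u-\ov u$, and smallness of $\|\n\phi_n\|_{L^2}$ gives nothing because $u-\ov u$ can be (and for the relevant test functions \emph{is}) large precisely on the annulus where $\n\phi_n$ lives, while $u-\ov u\notin L^\infty$. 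A H\"older bound $\int(u-\ov u)^2|\n\phi_n|^2\leq\|u-\ov u\|_{L^{2p}}^2\|\n\phi_n\|_{L^{2p'}}^2$ requires $p'>1$ to invoke Sobolev embedding, but $\|\n\phi_n\|_{L^{2p'}}$ for $p'>1$ diverges for logarithmic cutoffs; the $p'=1$ case requires $L^\infty$ control on $u$ that you do not have. Moreover, the fixed separation $\ov\d$ caps the outer radius of the annuli, so the only way to make $\|\n\phi_n\|_{L^2}$ small is to shrink the inner radius, but that region must still contain $\O_n$, whose size is fixed.

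The paper's fix is a Fourier decomposition: write $u=v+w$ with $v=P_{E_{\l_N}}(u)$ the projection onto eigenspaces of $-\D$ with eigenvalue $\l_j<C_{\e,\ov\d}/\e$, and $w$ its orthogonal complement. The Moser--Trudinger inequality is applied to $\chi_n w$, not to $\chi_n(u-\ov u)$. Then the error term becomes $C_{\e,\ov\d}\int_M w^2\,dV_g\leq\e\int_M|\n w|^2\,dV_g$ by the spectral gap, and the low-frequency part contributes $\|v\|_{L^\infty(M)}\leq C\bigl(\int_M|\n v|^2\,dV_g\bigr)^{1/2}$ because $E_{\l_N}$ is finite-dimensional; both are then absorbed by $\e\int_M|\n u|^2\,dV_g+C_\e$ via Young. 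This decomposition is what you need, and your attribution of a logarithmic-cutoff fix to Chen--Li misremembers the mechanism: their argument is a different (truncation-in-$u$) device, not one based on small-gradient cutoffs.
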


\begin{proof}
We may assume $k\geq l$ and $\ov u=0.$ Letting $f_1=\frac{e^{u}}{\int_M e^{u}\,dV_g}$, $f_2=\frac{e^{-2u}}{\int_M e^{-2u}\,dV_g}$ we apply Lemma \ref{l:step1} to find $\dis{\ov\d>0,\;\ov\th>0}$ and $\dis{\{\O_n\}_{n=1}^k\subset M}$ such that
$$d(\O_n,\O_{n'})\ge\ov\d\qquad\qquad\forall\;n,\ n'\in\{1,\dots,k\}\hbox{ with }n\ne n',$$
\begin{equation} 	\label{vol}
\int_{\O_n}e^{u}\,dV_g\ge\ov\th {\int_M e^{u}\,dV_g}, \quad \int_{\O_m}e^{-2u}\,dV_g \ge\ov\th\int_M e^{-2u}\,dV_g, \qquad\forall\;n\in\{1,\dots,k\}, \; \forall\;m\in\{1,\dots,l\}.
\end{equation}
We then introduce $k$ cut-off functions $0\leq\chi_n\leq 1$ such that
\begin{equation} \label{cut}
	{\chi_n}_{|\O_n} \equiv 1, \quad {\chi_n}_{|M\setminus (B_{\ov\d/2}(\O_n))}\equiv 0, \quad |\n \chi_n| \leq C_{\ov \d}, \qquad n=1,\dots,k.
\end{equation}
At this point we decompose $u$ such that $u=v+w$, with $\ov v = \ov w = 0$ and $v\in L^{\infty}(M)$. Such decomposition will be suitably chosen later on. Since $v$ will be under control, our aim is to apply localized (in $\O_n$) Moser-Trudinger inequalities \eqref{m-t} to $w$, namely to $\chi_n w$. We start by observing that using the volume spreading of $u$, see \eqref{vol}, we have
\begin{align}
	\log \int_M e^u \, dV_g &\leq \log \int_{\O_n} e^u \,dV_g + C_{\ov\d} =  \log \int_{\O_n} e^{v+w} \,dV_g + C \leq \log \int_{\O_n} e^{w} \,dV_g + \|v\|_{L^{\infty}(M)}+ C \nonumber\\
	& \leq \log \int_{M} e^{\chi_n w} \,dV_g + \|v\|_{L^{\infty}(M)}+ C. \label{dec}
\end{align}
The same holds true for $-2u$: summing together and using the Moser-Trudinger inequality \eqref{m-t} we end up with
\begin{align*}
	8\pi \log \int_M e^u\,dV_g + \frac{4\pi}{2} \log \int_M e^{-2u}\,dV_g  \leq 8\pi \log \int_M e^{\chi_n w}\,dV_g + \frac{4\pi}{2} \log \int_M e^{-2\chi_n w}\,dV_g + C\|v\|_{L^{\infty}(M)}+ C
\end{align*}
\begin{equation} \label{mt1}
\leq \frac 12 \int_M |\n(\chi_n w)|^2 \,dV_g +8\pi\int_M \chi_n w \,dV_g - \frac{4\pi}{2}\int_M 2\chi_n w \,dV_g  + C\|v\|_{L^{\infty}(M)}+ C,
\end{equation}
for $n=1,\dots,l$. We have to analyze the latter terms.

\medskip

We start by observing that $\ov w = 0$: by the Poincar\'e's inequality and by the Young's inequality
\begin{equation} \label{mt2}
 \int_M \chi_n w \,dV_g \leq \int_M |w| \,dV_g \leq C \| w \|_{L^2(M)} \leq C \left( \int_M |\n w|^2 \,dV_g \right)^{\frac 12} \leq \e \int_M |\n w|^2 \,dV_g + C_{\e}.
\end{equation}

\medskip

Concerning the gradient term we have, by using the Young's inequality and recalling the construction of the cut-off functions in \eqref{cut}
\begin{align}
	\int_M |\n(\chi_n w)|^2 \,dV_g &= \int_M \biggr( \chi_n^2|\n w|^2 + |\n\chi_n|^2 w^2 + 2(\chi_n \n w)\cdot(w\n\chi_n) \biggr) \,dV_g \nonumber\\
	& \leq (1+\e)\int_M \chi_n^2 |\n w|^2 \,dV_g + \left(1+\frac{1}{\e}\right)\int_M w^2|\chi_n|^2 \,dV_g \nonumber\\
	& \leq (1+\e)\int_{B_{\ov\d/2}(\O_n)}  |\n w|^2 \,dV_g + C_{\e,\ov\d}\int_M w^2 \,dV_g. \label{mt3}
\end{align}

\medskip

Putting together \eqref{mt1}, \eqref{mt2} and \eqref{mt3} we deduce
\begin{align} \label{mt-f}
\begin{split}
8\pi \log \int_M e^u\,dV_g + \frac{4\pi}{2} \log \int_M e^{-2u}\,dV_g  \leq &\frac{1+\e}{2}\int_{B_{\ov\d/2}(\O_n)}  |\n w|^2 \,dV_g + \e \int_M |\n w|^2 \,dV_g\\
&+ C_{\e,\ov\d}\int_M w^2 \,dV_g + C\|v\|_{L^{\infty}(M)}+ C,
\end{split}
\end{align}
with $n=1,\dots,l$. For $m=l+1,\dots,k$ we can just use the spreading of $u$, see \eqref{vol}: proceeding as in \eqref{dec}, using the standard Moser-Trudinger inequality \eqref{ineq} for $\chi_m w$ and by the estimates \eqref{mt2}, \eqref{mt3}, we obtain
\begin{equation} \label{mt-f2}
8\pi \log \int_M e^u\,dV_g  \leq \frac{1+\e}{2}\int_{B_{\ov\d/2}(\O_m)}  |\n w|^2 \,dV_g + \e \int_M |\n w|^2 + C_{\e,\ov\d}\int_M w^2 \,dV_g + C\|v\|_{L^{\infty}(M)}+ C,
\end{equation}
for $m=l+1,\dots,k$. Summing up \eqref{mt-f}, \eqref{mt-f} and recalling that the sets $\O_j$ are disjoint we end up with
\begin{align}
8k\pi \log \int_M e^u\,dV_g + \frac{4l\pi}{2} \log \int_M e^{-2u}\,dV_g  \leq &\frac{1+\e}{2}\int_M  |\n w|^2 \,dV_g + \e \int_M |\n w|^2 \,dV_g \nonumber\\
&+ C_{\e,\ov\d}\int_M w^2 \,dV_g + C\|v\|_{L^{\infty}(M)}+ C. \label{mt-f3}
\end{align}

\medskip

Finally, we have to suitably choose $v,w$ to estimate the left terms. To this end, consider a basis of eigenfunctions of $-\D$ in $H^1(M)$ with zero average condition relative to positive non-decreasing eigenvalues $\{\l_j\}_{j\in\N}$. Let $N=N_{\e,\ov\d}=\max\left\{ j\in \N \,:\, \l_j < \frac{C_{\e,\ov\d}}{\e} \right\}$, where $C_{\e,\ov\d}$ is the constant in \eqref{mt-f3}. We set
$$
v=P_{E_{\l_N}}(u), \qquad w= P_{E_{\l_N}^\bot}(u),
$$
where $E_{\l_N}$ is the direct sum of the eigenspaces with eigenvalues less or equal than $\l_N$ and $P$ denotes the projection. It follows that
\begin{equation*}
C_{\e,\ov\d}\int_M w^2 \,dV_g \leq \frac{C_{\e,\ov\d}}{\l_N} \int_M |\n w|^2 \,dV_g \leq \e \int_M |\n w|^2 \,dV_g \leq \e \int_M |\n u|^2 \,dV_g.
\end{equation*}
Moreover, $v\in E_{\l_N}$ which is a finite-dimensional space and hence, recalling that $\ov v=0$, its $L^{\infty}$ and $H^1$ norms are equivalent, i.e.
\begin{equation*}
\| v \|_{L^{\infty}(M)} \leq C \left( \int_M |\n v|^2 \,dV_g \right)^{\frac 12} \leq \e \int_M |\n v|^2 \,dV_g + C_{\e} \leq \e \int_M |\n u|^2 \,dV_g + C_{\e},
\end{equation*}
where we used also the Young's inequality. By plugging the latter two estimates in \eqref{mt-f3} we deduce the thesis.
\end{proof}

\medskip

From the latter result we deduce by standard arguments that if the energy $J_\rho(u)$ is large negative at least one of the two terms $e^u$, $e^{-2u}$ has to concentrate around some points of the surface. Recall the definitions of $M_k$ in \eqref{M_k}, $J_\rho^a$ in \eqref{sub} and $\dkr$ in \eqref{dist}. We have (see \cite{bjmr}):
\begin{pro}\label{p:altern}
Suppose $\dis{\rh_1\in(8k\pi,8(k+1)\pi)}$ and
$\dis{\rh_2\in(4l\pi,4(l+1)\pi)}$. Then, for any
$\displaystyle{\e>0}$, there exists $\dis{L>0}$ such that any
$\dis{u\in J_\rh^{-L}}$ verifies either
$$\dkr\left(\frac{\
h_1e^{u}}{\int_M
h_1e^{u}\,dV_g},M_k\right)<\e\qquad\qquad\text{or}\qquad\qquad
\dkr\left(\frac{ h_2e^{-2u}}{\int_M
h_2e^{-2u}\,dV_g},M_l\right)<\e.$$
\end{pro}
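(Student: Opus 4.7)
\medskip

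I would argue by contradiction, exploiting the improved Moser--Trudinger inequality of Proposition~\ref{mt-impr}. Assume the conclusion fails: then there exist $\e>0$ and a sequence $\{u_n\}\subset H^1(M)$ with $J_\rho(u_n)\to-\infty$ for which
$$\dkr\!\left(\frac{h_1e^{u_n}}{\int_M h_1e^{u_n}\,dV_g},M_k\right)\geq\e \qquad\text{and}\qquad \dkr\!\left(\frac{h_2e^{-2u_n}}{\int_M h_2e^{-2u_n}\,dV_g},M_l\right)\geq\e.$$
Since $J_\rho$ is invariant under $u\mapsto u+c$, I may assume $\ov{u_n}=0$.

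\medskip

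The first step is to invoke a standard covering lemma (see e.g.\ \cite{bjmr,mal}): if a probability measure $\mu$ on $M$ satisfies $\dkr(\mu,M_j)\geq\e$, then there exist $\d=\d(\e,j,M)>0$, $\th=\th(\e,j,M)>0$ and points $x_0,\dots,x_j\in M$ with pairwise distances at least $4\d$ such that $\mu(B_\d(x_i))\geq\th$ for every $i=0,\dots,j$. Since $C^{-1}\leq h_i\leq C$, applying this to the two normalized measures (and passing from $h_ie^{\pm u_n}$ to $e^{\pm u_n}$ at the cost of a constant) provides, for each $n$, disjoint families $\{\O_{1,i}\}_{i=0}^{k}$ and $\{\O_{2,j}\}_{j=0}^{l}$ with mutual distances $\geq\d$ that satisfy the hypotheses of Proposition~\ref{mt-impr} with $k+1$ and $l+1$ in place of $k$ and $l$.

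\medskip

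Applying Proposition~\ref{mt-impr} with those parameters yields, for any $\eta>0$, a constant $C_\eta$ such that
$$8(k+1)\pi\log\int_M e^{u_n}\,dV_g+2(l+1)\pi\log\int_M e^{-2u_n}\,dV_g\leq\frac{1+\eta}{2}\int_M|\n u_n|^2\,dV_g+C_\eta.$$
Set $\lambda:=\max\{\rho_1/[8(k+1)\pi],\,\rho_2/[4(l+1)\pi]\}$, which is $<1$ by the hypothesis on $\rho$, and choose $\eta$ so that $\lambda(1+\eta)<1$. Using $\ov{u_n}=0$ Jensen's inequality gives $\log\int_M e^{\pm u_n}\,dV_g\geq0$, while the bounds on $h_i$ yield $\log\int_M h_ie^{\pm u_n}\,dV_g=\log\int_M e^{\pm u_n}\,dV_g+O(1)$. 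Combining these facts with the displayed inequality produces
$$\rho_1\log\int_M h_1e^{u_n}\,dV_g+\frac{\rho_2}{2}\log\int_M h_2e^{-2u_n}\,dV_g\leq\frac{\lambda(1+\eta)}{2}\int_M|\n u_n|^2\,dV_g+C,$$
and therefore
$$J_\rho(u_n)\geq\frac{1-\lambda(1+\eta)}{2}\int_M|\n u_n|^2\,dV_g-C\geq-C,$$
contradicting $J_\rho(u_n)\to-\infty$.

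\medskip

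The main obstacle is the covering lemma invoked in the second step, which links the Kantorovich--Rubinstein distance from the barycenter set $M_j$ to the existence of $j+1$ disjoint regions carrying a positive fraction of the mass. This is a delicate but by now standard combinatorial result in the Moser--Trudinger framework on surfaces, already used for Liouville, Toda and sinh-Gordon equations; I would rely on the version in \cite{bjmr} rather than reprove it.
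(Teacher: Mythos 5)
Your argument is correct and is essentially the proof the paper has in mind: the paper does not reproduce a proof of Proposition~\ref{p:altern} but simply refers to \cite{bjmr}, where exactly this scheme appears, namely (i) the contrapositive covering lemma converting $\dkr(\mu,M_j)\geq\e$ into $j+1$ well-separated regions each carrying a fixed fraction $\th$ of the mass, (ii) an application of the improved Moser--Trudinger inequality of Proposition~\ref{mt-impr} with $k+1$ and $l+1$, and (iii) the choice of $\eta$ small enough to turn the subcritical position $\rho_1<8(k+1)\pi$, $\rho_2<4(l+1)\pi$ into coercivity of $J_\rho$, contradicting $J_\rho(u_n)\to-\infty$. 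Your use of Jensen's inequality to guarantee $\log\int_M e^{\pm u_n}\,dV_g\geq 0$ (so that multiplying by the ratio $\lambda<1$ preserves the inequality) is the standard way to close the estimate, and the handling of the factors $h_1,h_2$ via the uniform bounds is fine; no gaps.
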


\medskip

It follows that we can map continuously these configurations onto the sets $M_k$ in \eqref{M_k} by using the following know result, see \cite{bjmr} for a short proof of it.

\begin{lem}\label{l:projbar}
Given $j\in\N$, for $\e_0>0 $ sufficiently small there exists a
continuous retraction:
$$\psi_j: \bigr\{ \s \in \M(M),\ \dkr(\s , M_j) < \e_0 \bigr\} \to M_j.$$
In particular,
if $\s_n \rightharpoonup \s $ in the sense of measures, with $\s
\in M_j$, then $\psi_j(\s_n) \to \s$.
\end{lem}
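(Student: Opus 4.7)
First I would observe that since $M$ is compact, the Kantorovich-Rubinstein distance $\dkr$ metrizes weak-$*$ convergence on bounded subsets of $\M(M)$, and $M_j$ is compact in $(\M(M),\dkr)$. Hence, once $\psi_j$ is shown to be a continuous retraction, the ``in particular'' statement is immediate: if $\s_n\wk\s\in M_j$ then $\dkr(\s_n,\s)\to 0$, and by continuity $\psi_j(\s_n)\to \psi_j(\s)=\s$. The genuine task is therefore the construction of $\psi_j$ on an $\e_0$-neighborhood of $M_j$.

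\textbf{Construction via a concentration function at an intermediate scale.} I would introduce an auxiliary scale $r=r(\e_0)$ with $\e_0\ll r\ll 1$ (for instance $r=\e_0^{1/2}$), fix a smooth non-increasing cutoff $\phi:[0,\infty)\to[0,1]$ with $\phi\equiv 1$ on $[0,1/4]$ and $\phi\equiv 0$ on $[1,\infty)$, and set
\[
 F_\s(x)=\int_M \phi\!\left(\frac{d(x,y)}{r}\right) d\s(y),\qquad x\in M.
\]
The map $\s\mapsto F_\s$ is continuous from the $\e_0$-neighborhood of $M_j$ into $C^0(M)$, because $y\mapsto \phi(d(x,y)/r)$ is $(C/r)$-Lipschitz uniformly in $x$, giving $|F_\s(x)-F_{\s'}(x)|\le (C/r)\dkr(\s,\s')$. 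For $\nu=\sum_{i=1}^j t_i\delta_{x_i}\in M_j$ the function $F_\nu$ has strict local maxima close to the distinct $x_i$'s, a structure that is stable under $\dkr$-perturbations of size $o(r)$. One then selects centres iteratively: take $p_1(\s)$ to be a maximum of $F_\s$ on $M$, then $p_i(\s)$ a maximum of $F_\s$ on $M\setminus\bigcup_{l<i}B_{2r}(p_l(\s))$, stopping when this maximum falls below a fixed threshold $\tau=\tau(\e_0)$. By the boundedness of $F_\s$ this terminates at some $m\le j$. Weights are then defined via a partition of unity $\{\chi_i\}$ subordinate to $\{B_{3r}(p_i(\s))\}_i$ by $t_i(\s)=\int_M\chi_i\,d\s$, and renormalized so that $\sum_i t_i(\s)=1$ (the mass loss is $o_{\e_0}(1)$ by proximity to $M_j$). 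Finally set $\psi_j(\s)=\sum_{i=1}^m t_i(\s)\delta_{p_i(\s)}\in M_j$.

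\textbf{Verification and main obstacle.} The retraction property $\psi_j(\nu)=\nu$ for $\nu\in M_j$ is immediate once one notes that the merging prescription at scale $2r$ collapses coinciding atoms in exactly the same way as the equivalence relation defining $M_j$. The delicate point is the continuity of $\psi_j$: the selection $\s\mapsto\{(p_i(\s),t_i(\s))\}$ depends continuously on $\s$ away from configurations in which two centres of $F_\s$ collide or in which a centre crosses the threshold $\tau$. A collision of two centres corresponds precisely to one of the $t_i(\s)\delta_{p_i(\s)}$ being continuously absorbed into an adjacent atom, which matches the identifications built into $M_j$ and therefore produces a map that is continuous into the quotient space $M_j$. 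The hard part will be choosing $r$ and $\tau$ quantitatively in terms of $\e_0$ so that both types of degeneration occur in a continuous fashion on the full $\e_0$-neighborhood; once this is settled, the remaining verifications are routine, along the lines of the construction carried out in \cite{bjmr}.
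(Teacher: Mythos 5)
You correctly note that the ``in particular'' clause is automatic once $\psi_j$ is shown continuous: since $M$ is compact, $\dkr$ metrizes weak convergence of probability measures, so $\sigma_n\rightharpoonup\sigma\in M_j$ gives $\dkr(\sigma_n,\sigma)\to 0$ and then $\psi_j(\sigma_n)\to\psi_j(\sigma)=\sigma$. The substance is therefore the construction of $\psi_j$. Observe, however, that the paper itself does not prove this lemma --- it only points to \cite{bjmr} --- so your sketch has to stand on its own.

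As written it has two genuine gaps. First, the retraction property $\psi_j|_{M_j}=\mathrm{id}$ is not ``immediate'': if $\nu\in M_j$ has two atoms at distance less than $2r$, your merging prescription collapses them into a single one, so $\psi_j(\nu)\ne\nu$. The equivalence relation defining $M_j$ identifies only coinciding atoms, not merely nearby ones, and $M_j$ contains measures with atoms at arbitrarily small positive mutual distance. Second, and more seriously, the continuity that you flag as the delicate step and then defer as routine is exactly what fails. Picking $p_i(\sigma)$ as a maximizer of $F_\sigma$ over $M\setminus\bigcup_{l<i}B_{2r}(p_l)$ is not continuous in $\sigma$: when $F_\sigma$ has two near-maxima of comparable height at points far apart, the maximizer jumps between them, changing which ball is removed at the next stage, which subsequent atoms survive the stopping rule, and the combinatorics of the partition $\{\chi_i\}$ that defines the weights, and these jumps survive the passage to the quotient $M_j$. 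The hard threshold $\tau$ adds a further jump, since an atom carrying $\dkr$-mass comparable to $\tau$ is suddenly dropped as the residual maximum crosses $\tau$, and renormalizing $\sum_i t_i=1$ redistributes but does not remove that discontinuity. No quantitative tuning of $r$ and $\tau$ repairs a hard argmax/threshold selection; the argument in \cite{bjmr} (building on earlier constructions of Djadli and Malchiodi) is designed precisely so that every step is a soft averaged operation, with mass integrals and local barycenters over a fixed finite cover and with weights that fade continuously to zero, which is why continuity holds there. As it stands, your construction neither restricts to the identity on $M_j$ nor defines a continuous map, so the lemma is not established.
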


\medskip

The alternative in Proposition \ref{p:altern} can be expressed in terms of the topological join $M_k*M_l$, see \eqref{join} and the discussion in the Introduction. Moreover, we can restrict ourselves to targets in a simpler subset by exploiting the assumption on the surface to have positive genus. Indeed, the next topological fact holds true.
\begin{lem} \label{new} Let $M$ be a compact surface with positive genus $g(M)>0$. Then, there exist two
simple closed curves $\gamma_1, \gamma_2 \subseteq M$ such that
\begin{enumerate}
\item $\gamma_1, \gamma_2$ do not intersect
each other;

\item there exist global retractions $\Pi_i: M \to \gamma_i$,
$i=1,2$.

\end{enumerate}
\end{lem}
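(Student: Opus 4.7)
The plan is to realize $\gamma_1,\gamma_2$ as two parallel push-offs of one non-separating simple closed curve on $M$, and to derive both retractions from a single continuous map $f:M\to S^1$ that coincides with a coordinate projection on a tubular neighborhood.

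Since $g(M)\geq 1$, I can pick a non-separating simple closed curve $\alpha\subset M$. The curve $\alpha$ admits a tubular neighborhood $N\cong\alpha\times(-1,1)$ sending $\alpha$ to $\alpha\times\{0\}$, and I set $\gamma_1=\alpha\times\{-1/2\}$ and $\gamma_2=\alpha\times\{1/2\}$; these are simple, closed, and disjoint, which settles (1). Granting the existence of a continuous map $f:M\to S^1$ that agrees with the first-factor projection $(x,t)\mapsto x$ on $N$, I define
\[
\Pi_1(y)=(f(y),-1/2)\in\gamma_1,\qquad \Pi_2(y)=(f(y),1/2)\in\gamma_2.
\]
For $y\in\gamma_i$ one has $f(y)=y$ in the $\alpha$-coordinate, so $\Pi_i(y)=y$, and each $\Pi_i$ is a global continuous retraction onto $\gamma_i$, establishing (2).

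The main obstacle is constructing $f$. I would exploit the identification $H^1(M;\mathbb{Z})\cong[M,S^1]$ (since $S^1=K(\mathbb{Z},1)$) together with the fact that a non-separating simple closed curve on an oriented surface represents a primitive class in $H_1(M;\mathbb{Z})$ (it has intersection number one with a dual simple closed curve obtained by joining the two sides of $\alpha$ through the connected complement $M\setminus\alpha$). Consequently there is a cohomology class $\beta\in H^1(M;\mathbb{Z})$ with $\langle\beta,[\alpha]\rangle=1$, represented by a continuous map $f_0:M\to S^1$ whose restriction to $\alpha$ has degree one. Because any two degree-one self-maps of the circle are homotopic, a collar-neighborhood argument carried out in a sub-annulus of $N$ then straightens $f_0$ to agree with the projection on all of $N$ without altering it outside $N$; this produces the required $f$ and completes the construction.
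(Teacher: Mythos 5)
Your argument is correct in substance. The paper itself does not prove Lemma \ref{new} but defers to \cite{bjmr}, where the construction is more hands-on and geometric (an explicit model of the genus-$g$ surface); your route via the identification $[M,S^1]\cong H^1(M;\mathbb{Z})$ (since $S^1=K(\mathbb{Z},1)$) together with the primitivity of a non-separating curve in $H_1(M;\mathbb{Z})$ is a clean algebraic-topological alternative that yields the same two retractions.

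One small imprecision is in the final straightening step: you cannot literally make $f$ ``agree with the projection on all of $N$ without altering it outside $N$,'' because $f_0$ need not agree with the projection on $\partial N$, and such an $f$ would be discontinuous there. The standard fix is easy. First use the homotopy extension property for the cofibration $\alpha\hookrightarrow M$ to replace $f_0$ by a homotopic $f_1:M\to S^1$ with $f_1|_\alpha=\mathrm{id}$ under the identification $\alpha\cong S^1$. Then pick a continuous $\phi:(-1,1)\to(-1,1)$ with $\phi\equiv 0$ on $[-1/2,1/2]$ and $\phi(t)=t$ near $|t|=1$, let $\Theta:M\to M$ be $(x,t)\mapsto(x,\phi(t))$ on $N$ and the identity elsewhere, and set $f=f_1\circ\Theta$. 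Then $f$ is continuous, agrees with the first-factor projection on the closed sub-annulus $\alpha\times[-1/2,1/2]\supset\gamma_1\cup\gamma_2$, and this is all your definition of $\Pi_i$ requires. (Equivalently, one can skip the annulus entirely: since $\gamma_1\sqcup\gamma_2\hookrightarrow M$ is a cofibration and $f_0$ has degree one on each $\gamma_i$, one can homotope $f_0$ so that it restricts to the chosen identification on both circles at once.) The remaining steps — existence of a non-separating simple closed curve for $g(M)\geq 1$, its primitive class via a transverse dual curve, the existence of $\beta\in H^1(M;\mathbb{Z})$ with $\langle\beta,[\alpha]\rangle=1$, and the degree computation on $\alpha$ — are all correct; note that orientability of $M$ (assumed in the paper) is used both for the annular, rather than M\"obius, neighborhood and for the intersection-pairing argument.
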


\medskip

Finally, by collecting all these results we are in a position to get the following mapping.
\begin{pro}\label{p:map}
Suppose $\dis{\rh_1\in(8k\pi,8(k+1)\pi)}$ and
$\dis{\rh_2\in(4l\pi,4(l+1)\pi)}$. Then for $L$ sufficiently large there exists a continuous map
$$
	\Psi: J_\rho^{-L} \to (\gamma_1)_k*(\gamma_2)_l.
$$
\end{pro}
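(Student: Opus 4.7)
The plan is to build $\Psi$ by extracting three ingredients from $u$: a point in $(\gamma_1)_k$ recording where the mass of $h_1 e^u$ is concentrated, a point in $(\gamma_2)_l$ recording where the mass of $h_2 e^{-2u}$ is concentrated, and a join parameter $s\in[0,1]$ indicating which of the two alternatives in Proposition \ref{p:altern} is active.

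First I set the normalized densities
$$
f_1(u) = \frac{h_1 e^u}{\int_M h_1 e^u\,dV_g}, \qquad f_2(u) = \frac{h_2 e^{-2u}}{\int_M h_2 e^{-2u}\,dV_g},
$$
and the distances $d_1(u) = \dkr(f_1(u),M_k)$, $d_2(u) = \dkr(f_2(u),M_l)$, both continuous in $u$. Since $(\gamma_i)_j$ is compact Hausdorff, Lemma \ref{l:projbar} applies verbatim on each curve $\gamma_i$ and produces continuous retractions $\psi^{\gamma_1}_k$ and $\psi^{\gamma_2}_l$ on the $\e_0$-neighborhoods of $(\gamma_1)_k$ and $(\gamma_2)_l$, respectively. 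Let $C_\Pi$ be a common Lipschitz constant of the retractions $\Pi_1,\Pi_2$ given by Lemma \ref{new}. Choosing $\e<\e_0/(2C_\Pi)$ and taking $L$ large, Proposition \ref{p:altern} guarantees that every $u\in J_\rho^{-L}$ satisfies $d_1(u)<\e$ or $d_2(u)<\e$.

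Next I push everything to the curves. Because push-forward is $C_\Pi$-Lipschitz in $\dkr$ and $(\Pi_1)_*$ maps $M_k$ into $(\gamma_1)_k$ (and $(\Pi_2)_*$ maps $M_l$ into $(\gamma_2)_l$), the measures $\wtilde f_i(u):=(\Pi_i)_* f_i(u)$ on $\gamma_i$ satisfy $\dkr\bigl(\wtilde f_i(u),(\gamma_i)_{k\text{ or }l}\bigr)\leq C_\Pi d_i(u)$, so $\psi^{\gamma_1}_k(\wtilde f_1(u))$ is defined whenever $C_\Pi d_1(u)<\e_0$, and similarly for the second factor. Introduce the continuous cut-off weights
$$
T_i(u) = \max\bigl(0,\,1 - C_\Pi d_i(u)/\e_0\bigr),
$$
which, by the choice of $\e$, satisfy $T_1(u)+T_2(u)>0$ on $J_\rho^{-L}$. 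Setting
$$
s(u) = \frac{T_2(u)}{T_1(u)+T_2(u)} \in [0,1],
$$
I finally define
$$
\Psi(u) = \bigl[\, \psi^{\gamma_1}_k(\wtilde f_1(u)),\; \psi^{\gamma_2}_l(\wtilde f_2(u)),\; s(u)\,\bigr] \in (\gamma_1)_k * (\gamma_2)_l,
$$
with the convention that the first (resp.\ second) coordinate is replaced by a fixed default value whenever $T_1=0$ (resp.\ $T_2=0$); by the alternative this can happen only if $s(u)=1$ (resp.\ $s(u)=0$), in which case the join equivalence \eqref{join} collapses precisely that coordinate.

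The main obstacle, and essentially the only nontrivial point, is checking continuity of $\Psi$ at configurations where one weight vanishes, say $T_1(u_0)=0$. There $s(u_0)=1$, and along any sequence $u_n\to u_0$ in $J_\rho^{-L}$ one has $s(u_n)\to 1$; by the equivalence relation in \eqref{join} the first coordinate is irrelevant in the limit, so $\Psi(u_n)\to\Psi(u_0)$ in $(\gamma_1)_k*(\gamma_2)_l$ regardless of the behavior of $\psi^{\gamma_1}_k(\wtilde f_1(u_n))$ (which may even be undefined for large $n$). The symmetric situation at $T_2=0$ is identical. Away from these degenerate loci, continuity follows at once from the continuity of $u\mapsto f_i(u)$, of the push-forward, and of the retractions $\psi^{\gamma_i}_{k,l}$ on their domains, exactly as in the corresponding construction in \cite{bjmr}.
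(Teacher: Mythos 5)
Your construction is correct and follows the same strategy as the paper, with two cosmetic variants worth flagging. First, you push the normalized densities forward to $\gamma_i$ via $(\Pi_i)_*$ and \emph{then} apply the barycenter retraction on the curve, whereas the paper applies $\psi_k$ (resp.\ $\psi_l$) on $M$ first and pushes the resulting element of $M_k$ (resp.\ $M_l$) forward. Your order requires Lemma \ref{l:projbar} to hold with $\gamma_i$ in place of $M$; as you note this is a valid extension (the construction in \cite{bjmr} works on any compact metric space), but since the lemma is stated for $M$ only, the paper's order avoids having to invoke that extension. Second, your join parameter $s(u)=T_2(u)/(T_1(u)+T_2(u))$ built from cut-off weights $T_i=\max\bigl(0,1-C_\Pi d_i/\e_0\bigr)$ differs in form from the paper's piecewise-linear $s=f\bigl(d_1/(d_1+d_2)\bigr)$, but both are continuous and both force $s=1$ (resp.\ $s=0$) exactly when $\psi^{\gamma_1}_k$ (resp.\ $\psi^{\gamma_2}_l$) may fail to be defined, so they are interchangeable. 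One point in your continuity check is only half-stated: at $u_0$ with $T_1(u_0)=0$ you argue the first coordinate is irrelevant in the limit, but you also need the \emph{second} coordinate to converge, which holds because the alternative forces $T_2(u_0)>1/2$ so that $\psi^{\gamma_2}_l(\wtilde f_2)$ is defined and continuous on a whole neighborhood of $u_0$; this is implicit in your closing sentence but deserves to be said explicitly.
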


\begin{proof}
The proof can be obtained reasoning as in \cite{bjmr} (see also \cite{jkm}): we repeat it here for the reader's convenience. Recall the maps $\psi_j$ introduced in Lemma \ref{l:projbar}. By Proposition \ref{p:altern} we know that by taking $L$ sufficiently large either $\psi_k\left(\frac{h_1 e^{u}}{\int_M h_1 e^{u}\,dV_g}\right)$ or $\psi_l\left(\frac{h_2 e^{-2u}}{\int_M h_2 e^{-2u}\,dV_g}\right)$ is well-defined. We then set $d_1 = \dkr\left(\frac{{h}_1 e^{u}}{\int_M {h}_1 e^{u}\,dV_g}, M_k\right)$, $d_2 = \dkr\left(\frac{{h}_2 e^{-2u}}{\int_M {h}_2 e^{-2u}\,dV_g}, M_l\right)$ on which the join parameter will depend:
\begin{equation} \label{def:r}
s=s(d_1,d_2)=f\left (\frac{d_1}{d_1+d_2} \right),
 \qquad
f(x)= \left \{ \begin{array}{ll} 0 & \mbox{ if } x \in[0,1/4],\\
2z-\frac{1}2 & \mbox{ if } x \in (1/4, 3/4),\\
1 & \mbox{ if } x \in [3/4,1].\end{array} \right.
\end{equation}
Recall now the retractions $\Pi_i : \Sigma \to \gamma_i$, $i=1,2$ constructed in Lemma \ref{new}. Finally, we define the map
\begin{equation}
\label{eq:psi}
  \Psi(u) = (1-s) (\Pi_1)_* \psi_k \left(\frac{h_1 e^{u}}{\int_M {h}_1 e^{u}\,dV_g}\right)+ s\, (\Pi_2)_* \psi_l \left(\frac{{h}_2
  e^{-2u}}{\int_M h_2 e^{-2u}\,dV_g}\right),
\end{equation}
where $(\Pi_i)_*$ stands for the push-forward of the map $\Pi_i$.
Observe that when one of the two $\psi$'s is not defined the other necessarily
is so it is well-defined in the topological join $(\gamma_1)_k*(\gamma_2)_l$.
\end{proof}

\medskip

\subsection{Test functions} \label{subs:test}

We have proved in the previous subsection that there is a continuous map from the low sublevels of the functional $J_\rho$ to the topological join $(\gamma_1)_k*(\gamma_2)_l$, see Proposition \ref{p:map}. Aim of this subsection is to show that we can construct a map in the other way round in a natural way, namely we will introduce a mapping
$$
\Phi_\l: (\gamma_1)_k * (\gamma_2)_l \to J_\rho^{-L},
$$
for large $L$ and the parameter $\l>0$ to be defined in the sequel. We start by taking $\zeta \in (\gamma_1)_k * (\gamma_2)_l, \zeta = (1-s) \s_1 + s \s_2$, with
$$
  \s_1 := \sum_{i=1}^k t_i \d_{x_i} \in (\gamma_1)_k \qquad
  \hbox{ and } \qquad \s_2 :=  \sum_{j=1}^l s_j \d_{y_j} \in
  (\gamma_2)_l.
$$
Our goal is to construct test functions modeled on $\zeta \in (\gamma_1)_k * (\gamma_2)_l$. To this end we set ${\Phi}_\l(\zeta)= \var_{\l , \zeta}$ given by
\begin{equation} \label{bubble}
    \var_{\l , \zeta} (x) =   \log \, \sum_{i=1}^{k} t_i \left( \frac{1}{1 + \l_{1,s}^2 d(x,x_i)^2} \right)^2
    -  \frac 12 \log  \, \sum_{j=1}^{l} s_j \left( \frac{1}{1 + \l_{2,s}^2 d(x,y_j)^2} \right)^2,
\end{equation}
where $\l_{1,s} = (1-s) \l, \l_{2,s} = s \l$. We point out that for $s=1$ the latter definition does not depend on $\s_1$ and similarly for $s=0$ so that it is well-defined with respect to the equivalence relation of the topological join. The main result of this subsection is the following estimate.
\begin{pro} \label{test-f}
Suppose $\rho_1 \in (8 k \pi, 8 (k+1) \pi)$ and $\rho_2 \in (4 l \pi, 4 (l+1) \pi)$.
Then one has
$$
  J_{\rho}(\var_{\l , \zeta}) \to - \infty \quad \hbox{ as  } \l \to + \infty,
  \qquad \quad \hbox{ uniformly in }  \zeta \in (\gamma_1)_k * (\gamma_2)_l.
$$
\end{pro}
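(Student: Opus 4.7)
My plan is to estimate separately the Dirichlet part of $J_\rho(\varphi_{\lambda,\zeta})$ and the two logarithmic terms, uniformly in $\zeta$, and then to exploit the strict inequalities $\rho_1>8k\pi$, $\rho_2>4l\pi$. Write $\varphi_{\lambda,\zeta}=U_1-\tfrac12 U_2$ as in \eqref{bubble}, and observe that $\max\{1-s,s\}\ge \tfrac12$, so at least one of $\lambda_{1,s}=(1-s)\lambda$ and $\lambda_{2,s}=s\lambda$ is bounded below by $\lambda/2$; also, by Lemma \ref{new}, $\delta:=d(\gamma_1,\gamma_2)>0$.

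For the gradient, I split
\[ \int_M|\nabla \varphi_{\lambda,\zeta}|^2\,dV_g = \int_M|\nabla U_1|^2\,dV_g + \tfrac14\int_M|\nabla U_2|^2\,dV_g - \int_M\nabla U_1\cdot\nabla U_2\,dV_g. \]
Standard sharp test-function computations for convex combinations of Liouville bubbles (as in \cite{jost-wang,bat-mal,bjmr}) give, uniformly in the weights $t_i,s_j$,
\[ \int_M|\nabla U_1|^2\,dV_g \le 32\pi k\log(1+\lambda_{1,s}) + C, \qquad \int_M|\nabla U_2|^2\,dV_g \le 32\pi l\log(1+\lambda_{2,s}) + C. \]
For the cross term, I use that $\nabla U_\alpha$ is a pointwise convex combination of the gradients $\nabla\log(1+\lambda_{\alpha,s}^2 d(x,p)^2)^{-2}$ over the bubble centers $p$, and that each such gradient has modulus $\le C/\delta$ once $d(x,p)\ge \delta/3$. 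Splitting $M$ into a $\delta/3$-neighborhood of $\gamma_1$ and its complement, and using the uniform bound $\|\nabla U_\alpha\|_{L^1(M)}\le C$ (a direct calculation on a single bubble), one obtains $|\int_M\nabla U_1\cdot\nabla U_2\,dV_g|\le C$. Altogether
\[ \tfrac12\int_M|\nabla\varphi_{\lambda,\zeta}|^2\,dV_g \le 16\pi k\log(1+\lambda_{1,s}) + 4\pi l\log(1+\lambda_{2,s}) + C. \]

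For the nonlinear terms I first compute $\bar\varphi_{\lambda,\zeta}=-4\log(1+\lambda_{1,s})+2\log(1+\lambda_{2,s})+O(1)$. Restricting the integrals to neighborhoods of $\gamma_1$ and $\gamma_2$ respectively and using the local expansion $\varphi_{\lambda,\zeta}\approx \log t_i-2\log(1+\lambda_{1,s}^2 d(\cdot,x_i)^2)+2\log(1+\lambda_{2,s})+O(1)$ near each $x_i$, with a symmetric expansion near each $y_j$, one gets the matching lower bounds
\[ \log\int_M h_1 e^{\varphi_{\lambda,\zeta}}\,dV_g - \bar\varphi_{\lambda,\zeta} \ge 2\log(1+\lambda_{1,s}) - C, \]
\[ \log\int_M h_2 e^{-2\varphi_{\lambda,\zeta}}\,dV_g + 2\bar\varphi_{\lambda,\zeta} \ge 2\log(1+\lambda_{2,s}) - C, \]
uniformly in $\zeta$ (the positivity and smoothness of $h_1,h_2$ only affect the constants). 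Combining these with the Dirichlet estimate yields
\[ J_\rho(\varphi_{\lambda,\zeta}) \le (16\pi k - 2\rho_1)\log(1+\lambda_{1,s}) + (4\pi l - \rho_2)\log(1+\lambda_{2,s}) + C, \]
and both coefficients are strictly negative by the assumption on $\rho$. Since $\max(\lambda_{1,s},\lambda_{2,s})\ge \lambda/2$, at least one of the two logarithms diverges to $+\infty$ as $\lambda\to+\infty$, and hence $J_\rho(\varphi_{\lambda,\zeta})\to-\infty$ uniformly in $\zeta\in(\gamma_1)_k\ast(\gamma_2)_l$.

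The main technical obstacle is the uniformity in $\zeta$: all $O(1)$ constants must be independent of $s\in[0,1]$ and of the weights $t_i,s_j$, including the degenerate regimes $s\to0,1$ or individual $t_i,s_j\to 0$. The boundary of the join is handled automatically by the equivalence relation of $\ast$ (at $s=0$ or $s=1$ one of the $U_\alpha$ is identically zero), and systematically replacing $\log\lambda_{\alpha,s}$ by $\log(1+\lambda_{\alpha,s})$ keeps every bound graceful on those edges. The cross-term estimate is the other delicate point, and it is precisely where the disjointness $\gamma_1\cap\gamma_2=\emptyset$ furnished by Lemma \ref{new} is used in an essential way.
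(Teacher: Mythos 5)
Your proof follows essentially the same route as the paper's: the same splitting $\varphi_{\lambda,\zeta}=v_1-\tfrac12 v_2$, the same pointwise gradient bounds $|\nabla v_i|\le C\lambda_{i,s}$ and $|\nabla v_i|\le 4/d_{i,\min}$ leading to $\tfrac12\int|\nabla\varphi|^2\le 16k\pi\log\lambda_{1,s}+4l\pi\log\lambda_{2,s}+C$ plus a bounded cross term using the disjointness of $\gamma_1,\gamma_2$, and the same asymptotics for the two log-integrals and the average, giving the final upper bound $(16k\pi-2\rho_1)\log\lambda_{1,s}+(4l\pi-\rho_2)\log\lambda_{2,s}+C$. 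Your use of $\log(1+\lambda_{\alpha,s})$ in place of the paper's $\log(\lambda_{\alpha,s}+\delta_{\alpha,s})$ and your cleaner tubular-neighborhood split for the cross term are only cosmetic variants of the same argument, so the two proofs are essentially identical.
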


\begin{proof}
Let ${v}_1,{v}_2$ be two functions given by
\begin{eqnarray*}
    {v}_1(x) = \log \, \sum_{i=1}^{k} t_i \left( \frac{1}{1 + \l_{1,s}^2 d(x,x_i)^2} \right)^2, \qquad
    {v}_2(x) = \log \, \sum_{j=1}^{l} s_j \left( \frac{1}{1 + \l_{2,s}^2 d(x,y_j)^2} \right)^2,
\end{eqnarray*}
so that $\var = {v}_1 - \frac 12 {v}_2$. We start by considering the part involving the gradient terms, i.e.
\begin{equation} \label{gr}
    \frac 12 \int_{M} |\n \var|^2 \,dV_g  =  \frac 12 \int_M \left(|\n {v}_1|^2 + \frac 14 |\n {v}_2|^2 - \n {v}_1 \cdot \n {v}_2\right) \,dV_g.
\end{equation}
We need here two estimates on the gradients of $v_1$ and $v_2$:
\begin{equation} \label{gr1}
    |\n v_i(x)| \leq C \l_{i,s}, \qquad \mbox{for every $x\in M$ and $s\in[0,1],$} \quad i=1,2,
\end{equation}
where $C$ is a constant independent of $\l$, $\zeta \in (\gamma_1)_k * (\gamma_2)_l$, and
\begin{equation} \label{gr2}
    |\n v_i(x)| \leq \frac{4}{d_{\,i,min}(x)}, \qquad \mbox{for every $x\in M,$} \quad i=1,2,
\end{equation}
where $\dis{d_{1,min}(x) = \min_{i=1,\dots,k} d(x,x_i)}$ and $\dis{d_{2,min}(x) = \min_{j=1,\dots,l} d(x,y_j).}$

\medskip

We prove the inequalities for $v_1$; for $v_2$ we can argue in the same way. We have that
$$
    \n v_1(x) = - 2 \l_{1,s}^2 \frac{\sum_{i=1}^k t_i \bigr(1 + \l_{1,s}^2 d^2(x,x_i)\bigr)^{-3} \n \bigr(d^2(x,x_i)\bigr)}{\sum_{j=1}^k t_j \bigr(1 + \l_{1,s}^2 d^2(x,x_j)\bigr)^{-2}}.
$$
Using $\left|\n \bigr(d^2(x,x_i)\bigr)\right| \leq 2 d(x,x_i)$ jointly with
$$
    \frac{\l_{1,s}^2 d(x,x_i)}{1 + \l_{1,s}^2 d^2(x,x_i)} \leq C \l_{1,s}, \qquad i = 1, \dots, k,
$$
with $C$ a fixed constant, we obtain (\ref{gr1}). We prove now (\ref{gr2}). Observe that if $\l_{1,s} = 0$ the inequality is satisfied. If $\l_{1,s} > 0$ we have
\begin{eqnarray*}
  |\n v_1(x)| & \leq & 4 \l_{1,s}^2 \frac{\sum_{i=1}^k t_i \bigr(1 + \l_{1,s}^2 d^2(x,x_i)\bigr)^{-3} d(x,x_i)}{\sum_{j=1}^k t_j \bigr(1 + \l_{1,s}^2 d^2(x,x_j)\bigr)^{-2}} \leq 4 \l_{1,s}^2 \frac{\sum_{i=1}^k t_i \bigr(1 + \l_{1,s}^2 d^2(x,x_i)\bigr)^{-2} \frac{d(x,x_i)}{\l_{1,s}^2 d^2(x,x_i)}}{\sum_{j=1}^k t_j \bigr(1 + \l_{1,s}^2 d^2(x,x_y)\bigr)^{-2}} \\
   & \leq & 4 \frac{\sum_{i=1}^k t_i \bigr(1 + \l_{1,s}^2 d^2(x,x_i)\bigr)^{-2} \frac{1}{d_{\,1,min}(x)}}{\sum_{j=1}^k t_j \bigr(1 + \l_{1,s}^2 d^2(x,x_j)\bigr)^{-2}} = \frac{4}{d_{\,1,min}(x)},
\end{eqnarray*}
which is (\ref{gr2}).

\medskip

Concerning \eqref{gr} we claim that there exist $C$ depending only on the surface such that
\begin{equation} \label{cl}
\left|\int_M \n {v}_1 \cdot\n {v}_2 \,dV_g\right| \leq C.
\end{equation}
In fact, considering the sets
\begin{equation}\label{sets}
    A_i = \left\{ x \in M : d(x,x_i) = \min_{j=1,\dots,k} d(x,x_j) \right\},
\end{equation}
by (\ref{gr2}) we have
\begin{eqnarray*}
    \int_M \n v_1 \cdot\n v_2 \,dV_g & \leq & \int_M |\n v_1| |\n v_2| \,dV_g \leq 16 \int_M \frac{1}{d_{1,min}(x) \, d_{2,min}(x)} \,dV_g(x) \\
                                       & \leq & 16 \sum_{i=1}^k \int_{A_i} \frac{1}{d(x,x_i) \, d_{2,min}(x)} \,dV_g(x).
\end{eqnarray*}
We divide $A_i$ into $A_i = B_\d(x_i) \cup (A_i \setminus B_\d (x_i)), i = 1,\dots k$, where $\d > 0$ is such that
$$
    \d = \frac 12 \min \left\{ \min_{i\in \{1,\dots k\}, j\in \{1,\dots l\}} d(x_i,
    y_j),\ \
    \min_{m,\, n\in \{1,\dots k\}, m \neq n} d(x_m,x_n) \right\}.
$$
Using a change of variables and observing that $d_{2,min}(x) \geq \frac 1C$ in $B_\d(x_i)$ we obtain
$$
    \sum_{i=1}^k \int_{B_\d(x_i)} \frac{1}{d(x,x_i) \, d_{2,min}(x)} \,dV_g(x) \leq C.
$$
One can carry out the same argument for $A_i \setminus B_\d (x_i)$ by exchanging the role of $d_{1,min}$ and $d_{2,min}$. This proves the claim \eqref{cl}.

\medskip

Suppose that $\l_{1,s}\geq1$ and consider the following splitting
$$
    \frac{1}{2} \int_{M} |\n v_1(x)|^2 \,dV_g(x) = \frac{1}{2} \int_{\bigcup_i B_{\frac{1}{\l_{1,s}}}(x_i)} |\n v_1(x)|^2 \,dV_g(x) + \frac{1}{2}\int_{M \setminus \bigcup_i B_{\frac{1}{\l_{1,s}}}(x_i)} |\n v_1(x)|^2 \,dV_g(x).
$$
From (\ref{gr1}) we have the bound
$$
    \int_{\bigcup_i B_{\frac{1}{\l_{1,s}}}(x_i)} |\n v_1(x)|^2 \,dV_g(x) \leq C.
$$
Using again the sets $A_i$ introduced in (\ref{sets}), by (\ref{gr2}) we deduce
\begin{eqnarray*}
    \frac 12 \int_{M \setminus \bigcup_i B_{\frac{1}{\l_{1,s}}}(x_i)} |\n v_1(x)|^2 \,dV_g  & \leq & 8 \int_{M \setminus \bigcup_i B_{\frac{1}{\l_{1,s}}}(x_i)} \frac{1}{d_{1,min}^2(x)} \,dV_g(x) + C \\
    & \leq & 8 \sum_{i=1}^k \int_{A_i \setminus B_{\frac{1}{\l_{1,s}}}(x_i)} \frac{1}{d_{1,min}^2(x)} \,dV_g(x) + C \\
                                         & \leq & 16 k \pi \log \l_{1,s} + C.
\end{eqnarray*}
Similar estimate holds for $v_2$ and hence recalling \eqref{cl}, by \eqref{gr} we get
\begin{equation} \label{grad}
    \frac 12 \int_{M} |\n \var|^2 \,dV_g  \leq  \frac 12 \int_M \left(|\n {v}_1|^2 + \frac 14 |\n {v}_2|^2\right) \,dV_g \leq 16 k \pi \log (\l_{1,s} +\d_{1,s}) + 4 l \pi \log (\l_{1,s}+\d_{2,s}) + C,
\end{equation}
uniformly in $s\in[0,1]$, for some $\d_{1,s} > \d > 0$ as $s \to 1$ and $\d_{2,s} > \d > 0$ as $s \to 0$, for fixed $\d$.

\medskip

We consider now the nonlinear term. By the definition we have
$$
    \int_M e^{\var} \,dV_g = \sum_{i=1}^k t_i \int_M \frac{1}{\bigr( 1 + \l_{1,s}^2 d(x,x_i)^2 \bigr)^2} \left( \sum_{j=1}^l s_j \frac{1}{\bigr( 1 + \l_{2,s}^2 d(x,y_j)^2 \bigr)^2} \right)^{-\frac 12}\,dV_g(x).
$$
Taking a $\ov{x}\in \{ x_1,\dots x_k \}$ the estimate of the latter integral will be the same of
$$
\int_M \frac{1}{\bigr( 1 + \l_{1,s}^2 d(x,\ov{x})^2 \bigr)^2} \left( \sum_{j=1}^l s_j \frac{1}{\bigr( 1 + \l_{2,s}^2 d(x,y_j)^2 \bigr)^2} \right)^{-\frac 12}\,dV_g(x).
$$
Let $\Sg = B_\d(\ov{x}) \cup (\Sg \setminus B_\d(\ov{x}))$ with ${\d = \frac{\min_j \{ d(\ov{x},y_j) \}}2}$. In $B_\d(\ov{x})$, by a change of variables for the part concerning $\l_{1,s}$ and observing that $\frac 1C \leq d(x, y_j) \leq C, j=1,\dots,l$, for every $x\in B_\d(\ov{x})$, one can conclude
$$
\int_{B_\d(\ov{x})} \frac{1}{\bigr( 1 + \l_{1,s}^2 d(x,\ov{x})^2 \bigr)^2} \left( \sum_{j=1}^l s_j \frac{1}{\bigr( 1 + \l_{2,s}^2 d(x,y_j)^2 \bigr)^2} \right)^{-\frac 12}\,dV_g(x) = \frac{\bigr(\l_{2,s} + \d_{2,s}\bigr)^2}{\bigr(\l_{1,s} + \d_{1,s}\bigr)^2}\bigr(1 + O(1)\bigr).
$$
Observing that in $\Sg \setminus
B_\d(\ov{x})$ it holds $\frac 1C \leq d(x, \ov{x}) \leq C$, it is easy to show that the contribution in this region is negligible with respect to the latter term. Therefore, we get
\begin{equation} \label{exp1}
\log \int_M  e^{\var} \,dV_g = 2 \log \bigr(\l_{2,s} + \d_{2,s}\bigr) - 2 \log \bigr(\l_{1,s} + \d_{1,s}\bigr) + O(1).
\end{equation}
Similar arguments yield
\begin{equation} \label{exp2}
\log \int_M e^{-2\var} \,dV_g = 8\log \bigr(\l_{1,s} + \d_{1,s}\bigr) - 2\log \bigr(\l_{2,s} + \d_{2,s}\bigr) + O(1).
\end{equation}

\medskip

We are left with the average part. For simplicity we estimate just $v_1$ for $k=1$ since for the general case the argument is the same. We claim that
$$
	\int_M v_1 \, dV_g = -4 \log \bigr(\l_{1,s}+\d_{1,s}\bigr) + O(1).
$$
We start by writing (recall we are assuming $k=1$)
$$
	v_1(x) = -4 \log \bigr( \max\{ 1,\l_{1,s}d(x, x_1) \} \bigr) + O(1), \qquad x_1 \in M.
$$
Suppose $\l_{1,s}\geq 1$: then we have
\begin{align*}
	\int_M v_1 \, dV_g &= -4 \int_{M \setminus B_{\frac{1}{\l_{1,s}}}(x_1)} \log \bigr(\l_{1,s}d(x, x_1)\bigr) \,dV_g - 4 \int_{B_{\frac{1}{\l_{1,s}}}(x_1)} dV_g + O(1) \\
	&= -4 \log(\l_{1,s}) \left|M \setminus B_{\frac{1}{\l_{1,s}}}(x_1)\right| -4 \int_{M \setminus B_{\frac{1}{\l_{1,s}}}(x_1)} \log (d(x, x_1)) \,dV_g + O(1).
\end{align*}
Recalling that we set $|M|=1$ the claim holds true. By the definition of $\var$ in \eqref{bubble} we conclude that
\begin{equation} \label{average}
	\int_M \var \, dV_g = -4 \log \bigr(\l_{1,s}+\d_{1,s}\bigr) + 2 \log \bigr(\l_{2,s}+\d_{2,s}\bigr) + O(1).
\end{equation}

\medskip

Using jointly \eqref{grad}, \eqref{exp1}, \eqref{exp2} and \eqref{average} we deduce
$$
  J_{\rho}(\var_{\l , \zeta}) \leq \bigr( 16 k \pi - 2 \rho_1 \bigr)\log \bigr(\l_{1,s} + \d_{1,s}\bigr) + \bigr( 4 l \pi -  \rho_2 \bigr)\log \bigr(\l_{2,s} + \d_{2,s}\bigr) + O(1).
$$
Since $\rho_1 > 8k\pi, \rho_2 > 4l\pi$ and $\dis{\max_{s\in[0,1]}\{ \l_{1,s}, \l_{2,s} \}} \to +\infty$ as $\l \to \infty$, the proof is concluded.
\end{proof}

\medskip

\subsection{The conclusion} \label{subs:conclusion}

In the previous subsections we introduced two maps
\begin{equation} \label{comp}
(\g_1)_k * (\g_2)_l \quad \stackrel{\Phi_{\l}}{\longrightarrow}\quad
J_{\rho}^{-L} \quad\stackrel{\Psi}{\longrightarrow}\quad (\g_1)_k * (\g_2)_l,
\end{equation}
see Propositions \ref{p:map} and \ref{test-f}. We show now a crucial fact, namely that their composition is homotopically equivalent to the identity map on $(\g_1)_k * (\g_2)_l$. Take $\zeta=(1-s) \s_1 + s \s_2 \in (\g_1)_k * (\g_2)_l$, with
$$
\s_1= \sum_{i=1}^k t_i \d_{x_i},\qquad \ \ \s_2= \sum_{j=1}^l s_j\d_{y_j}
$$
and consider $\Phi_\l(\zeta)=\var_{\l,\zeta}$ as in \eqref{bubble}. It is standard to see that
\begin{equation} \label{conv}
\displaystyle \frac{ h_1 e^{\varphi_{\l,\zeta}}}{\int_{M} h_1 e^{\varphi_{\l,\zeta}}\,dV_g} \rightharpoonup \s_1, \qquad \displaystyle \frac{ h_2 e^{-2\varphi_{\l,\zeta}}}{\int_{M} h_2 e^{-2\varphi_{\l,\zeta}}\,dV_g} \rightharpoonup \s_2,
\end{equation}
see for example \cite{bjmr}.

\medskip

Letting $\bar{\zeta}_{\l}= \Psi \circ \Phi_\l (\zeta)=
\bigr(\bar{\s}_{1,\l}, \bar{\s}_{2,\l}, \bar{s}\bigr)\in (\g_1)_k * (\g_2)_l$ the desired homotopy will be given in two: we start by letting $\l\to+\infty$ to get the above convergence and then we pass from $\bar s$ to $s$. The homotopy will be the concatenation of the following maps:
$$
H_i: \bigr ( (\g_1)_k * (\g_2)_l \bigr ) \times [0,1] \to (\g_1)_k *
(\g_2)_l , \qquad i=1,2,
$$
\begin{align*}
&H_1\bigr((\s_1,\s_2,s),\mu\bigr)=
\left(\bar{\s}_{1,\frac{\l}\mu},
\bar{\s}_{2, \frac{\l}\mu},\bar s\right), \\
&H_2\bigr((\s_1,\s_2,s),\mu\bigr)=
\bigr({\s}_{1},
{\s}_{2},(1-\mu)\bar s + \mu s\bigr).
\end{align*}
Observe that $H_1(\cdot,1) =\Psi \circ \Phi_{\l}$. Concerning the first step, for $\l$ fixed and $\mu\to 0$ we get the convergence in \eqref{conv}, hence by
Proposition \ref{l:projbar} we deduce $\displaystyle\psi_k\left(\frac{ h_1 e^{\varphi_{\l,\zeta}}}{\int_{M} h_1 e^{\varphi_{\l,\zeta}}\,dV_g}\right) \to \s_1$, $\displaystyle\psi_l\left(\frac{ h_2 e^{-2\varphi_{\l,\zeta}}}{\int_{M} h_2 e^{-2\varphi_{\l,\zeta}}\,dV_g}\right) \to \s_2$.
Since $\Pi_i$ are retractions, the latter convergence is preserved and we conclude that $\lim_{\mu \to 0} \bar{\s}_{i,\frac \l \mu} \to \s_i$, see the definition of $\Psi$ in \eqref{eq:psi}.

\medskip

We are now in a position to prove the main result of this section.

\medskip

\no \emph{Proof of Theorem \ref{th:existence}.} The equivalence to the identity of the composition of the maps in \eqref{comp} readily implies the following immersion of the homology groups:
\begin{equation} \label{homo}
	H_q\bigr( (\g_1)_k * (\g_2)_l \bigr) \hookrightarrow H_q \left(J_{\rho}^{-L}\right).
\end{equation}
Observe that since each $\gamma_i$ is homeomorphic to $S^1$, $(\g_1)_k$ is homeomorphic to $S^{2k-1}$ while $(\g_2)_l$ to $S^{2l-1}$. Then, $(\g_1)_k * (\g_2)_l$ is homeomorphic to $S^{2k+2l-1}$, see \cite{bjmr} for the references about this arguments. It follows that $(\g_1)_k * (\g_2)_l$ and $J_{\rho}^{-L}$ (by \eqref{homo}) have non-trivial topology.

Suppose by contradiction that \eqref{eq3} has no solutions. Since we are assuming $\rho_1\notin 8\pi\N$, $\rho_2\notin 4\pi\N$, we can apply Corollary \ref{top-arg} to deduce that $J_\rho^{-L}$ is a deformation retract of $J_\rho^{L}$ for any $L>0$. But Corollary~\ref{high} asserts that $J_\rho^{L}$ is contractible for some $L$, which leads $J_\rho^{-L}$ to be contractible too. Hence we get the contradiction.
\begin{flushright}
$\Box$
\end{flushright}

\

\end{document}